\theoremstyle{plain}
\newtheorem{theorem}{Theorem}[section]
\newtheorem*{theorem*}{Theorem}
\newtheorem{proposition}[theorem]{Proposition}
\newtheorem{lemma}[theorem]{Lemma}
\newtheorem{corollary}[theorem]{Corollary}
\newtheorem{remark}[theorem]{Remark}
\newtheorem{exam}[theorem]{Example}
\theoremstyle{definition}
\newtheorem{definition}[theorem]{Definition}
\newtheorem{assum}[theorem]{Assumption}
\newtheorem*{notation}{Notation}
\newcommand{\coker}{\mathrm{coker}}
\newcommand{\im}{\mathrm{Im}}
\newcommand{\CO}{\mathcal{O}}
\newcommand{\spa}{\mathrm{Spa}}
\newcommand{\Spa}{\mathrm{Spa}}
\newcommand{\Spec}{\mathrm{Spec}}
\newcommand{\Spf}{\mathrm{Spf}}
\newcommand{\Hom}{\mathrm{Hom}}
\newcommand{\Tor}{\mathrm{Tor}}
\newcommand{\CV}{\mathcal{V}}
\newcommand{\Ext}{\mathrm{Ext}}
\begin{document}
\title{Almost Vector Bundles over Perfectoid Spaces}
\author{Yuntong Cui, \ Guo Li, \  Shuhan Jiang\ and Jiahong Yu\thanks{Academy of Mathematics and Systems Science, Chinese Academy of Sciences} \thanks{Morningside Center of Mathematics, CAS}}
\maketitle

\begin{abstract}
   In this paper, we define vector bundles within the framework of almost mathematics (referred to as \emph{almost vector bundles}) and establish the $v$-descent theorem together with a structure theorem for these bundles over perfectoid spaces. The proof yields several interesting intermediate results.
\end{abstract}

\tableofcontents
\section{Introduction}
In \cite{scholze2012perfectoid}, Scholze defined integral perfectoid rings as an analogue of perfect rings in mixed characteristic. Integral perfectoid rings play an important role in the modern theory of commutative algebra, algebraic geometry and $p$-adic Hodge theory.

For a perfect ring $A$, the following theorem was proved in \cite{bhatt2017projectivity}. 
\begin{theorem*}[{\cite[Theorem 4.1]{bhatt2017projectivity}}]
    The restriction functor induces an equivalence between the category of $v$-vector bundles on $\Spec(A)$ and the category of Zariski vector bundles on $\Spec(A)$.
\end{theorem*}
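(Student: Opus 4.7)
The plan is to prove the restriction functor is fully faithful and essentially surjective. Since Zariski vector bundles on $\Spec(A)$ correspond to finite projective $A$-modules, the functor in question sends such a module $M$ to the $v$-sheaf whose value on a $v$-open $\Spec(B)\to\Spec(A)$ is $M\otimes_A B$.

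For full faithfulness, I would reduce to $v$-descent for the structure sheaf $\CO$ on $\Spec(A)$. For finite projective $A$-modules $M,N$, the internal Hom sheaf $\underline{\Hom}(M,N)$ is a retract of $N^{\oplus n}$ for some $n$, since $M$ is a retract of $A^n$. Hence $v$-descent for $\CO$ propagates to $\underline{\Hom}(M,N)$, yielding the desired bijection $\Hom_A(M,N)\cong \Hom(M^v,N^v)$ at the level of $v$-vector bundles. The critical input is therefore $v$-descent of $\CO$ on perfect affines, which I would derive from the fact that every $v$-cover of a perfect ring refines to an arc-cover, together with known arc-descent of $\CO$ on perfect schemes (following Bhatt--Mathew).

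For essential surjectivity, let $\mathcal{E}$ be a $v$-vector bundle on $\Spec(A)$, set $M:=\mathcal{E}(\Spec A)$, and pick a trivializing $v$-cover $A\to B$ so that $M\otimes_A B\cong B^{n}$. I would show $M$ is finite projective of rank $n$ in two steps. First, base change to every $v$-stalk $V$ of $\Spec(A)$: since $A$ is perfect, these are absolutely integrally closed henselian valuation rings, over which every $v$-cover splits, so $M\otimes_A V\cong V^n$ is in particular free. Second, I would globalize this pointwise freeness by refining the $v$-cover to one that is fpqc over $A$, at which point classical fpqc descent for finite projective modules transports the freeness of $M\otimes_A B'$ back to finite projectivity of $M$.

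The main obstacle is precisely this second, globalization step, since $v$-covers are not faithfully flat in general. The resolution leverages a structural feature specific to perfect rings: every $v$-cover factors, up to refinement, through an fpqc cover followed by an ind-henselization along a closed subscheme, and the latter operation does not affect vector bundles. Combined with the stalk-wise triviality established in the first step, this factorization forces $M$ to be finite projective over $A$ and completes the proof.
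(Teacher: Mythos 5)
The paper does not prove this statement; it is quoted verbatim from Bhatt--Scholze (their projectivity-of-the-Witt-vector-affine-Grassmannian paper) as motivation, so there is no in-paper proof to compare against. Evaluating your proposal on its own merits, there is a genuine gap in the essential-surjectivity argument.

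You set $M:=\mathcal{E}(\Spec A)$ and then assert that ``base change to every $v$-stalk $V$'' gives $M\otimes_A V\cong V^n$. This is circular: what you actually know a priori is that $\mathcal{E}(\Spec V)\cong V^n$, because every $v$-cover of an absolutely integrally closed valuation ring splits. The identification $M\otimes_A V\xrightarrow{\sim}\mathcal{E}(\Spec V)$ is not automatic --- it is precisely the content of $v$-descent that you are trying to establish. Without it, pointwise triviality of $\mathcal{E}$ says nothing about $M$. The standard route (as in Bhatt--Scholze) is instead to establish a structural result specific to perfect rings first: a Milnor/Beauville--Laszlo-type patching statement saying that for a perfect ring $A$ and an ideal $I$, vector bundles on $\Spec(A)$ are the pullback of vector bundles on $\Spec(A/I)$ and on the $I$-adic (or henselian) thickening glued over the common completion. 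Combined with a factorization of $v$-covers into open covers, quasi-compact proper surjections, and such patching morphisms, this gives essential surjectivity; your proposed factorization ``fpqc cover followed by an ind-henselization along a closed subscheme'' is not the correct statement (and even granting it, the claim that ind-henselization ``does not affect vector bundles'' is itself the nontrivial Milnor-patching input, which you would still have to prove). The full-faithfulness half of your argument, by contrast, is essentially right: reduce to sheafiness of $\mathbb{G}_a$ via the retract trick and invoke arc-descent of the structure sheaf on perfect rings, which implies $v$-descent since arc-covers include $v$-covers.
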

Using this theorem, Bhatt and Scholze further proved the Witt vector Grassmannian is represented by a scheme (not merely as an algebraic space).

As an analogue of perfect rings, we expect that integral perfectoid rings also admit similar descent theorems. 
Specifically, for a perfectoid Tate--Huber pair \((A,A^+)\), let \(X = \mathrm{Spa}(A,A^+)\). We wish to establish the relationship between vector bundles with coefficients in \(\mathcal{O}_X^+\) on \(X\) under the \(v\)-topology and vector bundles with coefficients in \(\mathcal{O}_X^+\) on \(X\) under the analytic topology. However, the article \cite{heuer2024gtorsorsperfectoidspaces} (see also Example \ref{ex: et bundle not descent}) has pointed out that even vector bundles with coefficients in \(\mathcal{O}_X^+\) on \(X\) under the \'{e}tale topology may not necessarily descend to analytic vector bundles. In this paper, we will use the language of almost mathematics to bridge this gap. In particular, we will define vector bundles in the context of almost mathematics (called almost vector bundles) and show that all almost vector bundles with coefficients in \(\mathcal{O}_X^+\) on \(X\) under the \(v\)-topology can descend to almost vector bundles with coefficients in \(\mathcal{O}_X^+\) on \(X\) under the analytic topology.

\subsection{Main results}\label{subsec:main}

Let $(A,A^+)$ be a perfectoid Tate--Huber pair and $X=\mathrm{Spa}(A,A^+)$. To make the exposition more concise, we introduce the following notation:
\begin{notation}
	Let $A^{\circ\circ}\subseteq A^+$ be the ideal of all topologically nilpotent elements in $A$, and consider the almost mathematics relative to the ideal $A^{\circ\circ}$. Denote by \( X_{\mathrm{an}} \) (resp. \( X_v \)) the analytic (resp. $v$-) topology on \( X \). Define \( X_{\mathrm{an}}^{+,a} \) (resp. \( X_v^{+,a} \)) as the almost ringed site \( (X_{\mathrm{an}}, \mathcal{O}^{+,a}_X) \) (resp. \( (X_v, \mathcal{O}^{+,a}_X) \)) in sense of Definition \ref{defi: almost ringed site setting}.
\end{notation}

Using this notation, we define almost vector bundles as follows:
\begin{definition}[Definition \ref{def}]\label{defi: almost vb intro}
	Let $?\in\{\mathrm{an},v\}$. For a positive integer $N$, a sheaf of module $\mathcal{V}$ on $X_{?}^{+,a}$ is called an \emph{almost vector bundle of rank $N$} if for any $\alpha\in R^{\circ\circ}$, there exists a covering $Y\to X$ of $X$ under the $?$-topology and an $\mathcal{O}_Y^{+,a}$-module sheaf homomorphism $f:\big(\mathcal{O}_{Y}^{+,a}\big)^{\oplus N}\to \mathcal{V}|_{Y}$ such that both $\ker f$ and $\mathrm{coker} f$ are annihilated by $\alpha$.
\end{definition}

Introducing almost mathematics is natural. Indeed, the ringed space \( (X_{\mathrm{an}}, \mathcal{O}_X^+) \) lacks properties that an affine space should possess, such as the acyclicity of the structure sheaf. However, it was already shown in \cite{scholze2012perfectoid} that the ringed space \( X_{\mathrm{an}}^{+,a} \) is closer to being an affine space.

The first main result of this paper is to prove the $v$-descent of almost vector bundles. Specifically, we have the following theorem.
\begin{theorem}[Theorem \ref{SAAVB}]\label{theo: v descent of almost vb}
    The restriction functor defines an equivalence between the category of almost vector bundles on \( X_v^{+,a} \) and the category of almost vector bundles on \( X_{\mathrm{an}}^{+,a} \).
\end{theorem}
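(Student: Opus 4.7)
The plan is to prove full faithfulness and essential surjectivity of the restriction functor, after reducing throughout to the affinoid perfectoid case $X=\Spa(A,A^+)$; the general case follows by analytic–local gluing once the affinoid case is settled, since both analytic and $v$-almost vector bundles are local for the analytic topology by definition.

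For full faithfulness, the crucial input is an almost acyclicity statement (a form of Scholze's $v$-acyclicity together with the almost purity theorem): on an affinoid perfectoid, $R\Gamma_v(X,\CO_X^{+,a})$ is concentrated in degree zero and coincides with $R\Gamma_{\mathrm{an}}(X,\CO_X^{+,a})$. Given two analytic almost vector bundles $\CV,\mathcal{W}$, the morphism spaces on either side compute $\mathrm{Hom}(\CV,\mathcal{W})$ via the internal $\mathcal{H}\mathrm{om}$-sheaf. After pulling back along a $v$-cover that $\alpha$-trivialises $\CV$ and $\mathcal{W}$, this internal Hom is $\alpha$-close to a finite free $\CO^{+,a}$-module, where the comparison between $v$- and analytic sections holds by almost acyclicity up to $\alpha$-torsion. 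Letting $\alpha$ range over $A^{\circ\circ}$ and passing to the limit (or equivalently, working in the almost setting where $\alpha$-torsion objects are trivial) yields the bijection on $\mathrm{Hom}$-groups.

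For essential surjectivity, take an almost vector bundle $\CV$ on $X_v^{+,a}$ and consider its module of global sections $M:=\CV(X)$ over $A^{+,a}$. Using the $\alpha$-close trivialisation over a $v$-cover and the almost acyclicity of $\CO^{+,a}$, one shows that $M$ is almost finitely generated and almost projective in the sense that for every $\alpha\in A^{\circ\circ}$ there exists a map $(A^{+,a})^{\oplus N}\to M$ with kernel and cokernel annihilated by $\alpha$. Sheafifying $M$ analytically via the assignment $U\mapsto M\otimes_{A^{+,a}}\CO_X^{+,a}(U)$ on rational subsets $U\subseteq X$ (which is a sheaf by almost flatness of $\CO^{+,a}(U)$ over $A^{+,a}$, a property inherited from the perfectoid structure of rational localisations) produces a candidate $\widetilde{M}$ on $X_{\mathrm{an}}^{+,a}$. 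The natural comparison map $\widetilde{M}\to\CV$ then becomes an isomorphism after $v$-sheafification, since on every affinoid $v$-cover both sides are computed by the same tensor product.

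The main obstacle I anticipate is verifying that $\widetilde{M}$ is itself an almost vector bundle in the \emph{analytic} topology, i.e.\ refining the $v$-local $\alpha$-trivialisation to an analytic-local one. This requires leveraging the Tate structure of $(A,A^+)$ and the topological nilpotence of $\alpha$: an $\alpha$-close trivialisation over a $v$-cover must be approximated by one defined over a rational cover, which is a perfectoid analogue of a Beauville–Laszlo / approximation argument. I expect this refinement, rather than the formal descent of morphisms, to be the technical heart of the proof; once it is in place, the categorical statement follows by assembling the pieces.
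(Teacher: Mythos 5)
Your outline correctly identifies the shape of the argument and even the right place where the real difficulty lives, but it does not resolve that difficulty, and it quietly smuggles the hard step into an ``one shows'' before formally acknowledging it as an obstacle. Concretely, you assert that, using the $\alpha$-close trivialisation over a $v$-cover together with almost acyclicity of $\mathcal O^{+,a}$, the global sections $M:=\mathcal V(X)$ are almost finite free. Almost acyclicity alone does not give this: if $\mathcal V|_Y$ is almost free for a $v$-cover $Y\to X$, descent identifies $M$ with the equalizer of the two pullbacks of an almost free $B^{+,a}$-module, and the nontrivial content is descending the $\mathrm{GL}_N$-valued cocycle so that this equalizer is again almost finite free. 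This is exactly the content of the paper's Proposition \ref{TECH}, which is proved (i) by reducing stalkwise to the case $X=\Spa(K,K^+)$ for a perfectoid field, where they appeal to completely faithfully flat descent together with a structure result for $\pi$-completely flat $K^\circ$-modules; and then (ii) by approximating the stalkwise solution $S_x$ by a matrix $S_U$ on a rational neighbourhood $U$ of $x$ and improving the approximation using the vanishing of almost \v{C}ech $H^1$ of $M_r(\mathcal O_X^+/\varpi)$ on $v$-covers of affinoid perfectoids. You name this step as a ``Beauville--Laszlo / approximation argument,'' which is the right intuition, but none of the reduction-to-points, the point case, or the cocycle approximation via cohomological vanishing appears in your sketch, so the central technical theorem behind the result is absent.

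There is also a preliminary step you skip entirely: before you can speak of an almost free module $V/B^{+,a}$ inducing $\mathcal V|_Y$, you need to know that an almost vector bundle in the $v$-topology is \emph{locally almost free} in the sheaf sense (the paper's Theorem \ref{Thm: vb is svb}), not merely $\alpha$-trivial cover-by-cover for each $\alpha$ separately. The unification of the $v$-covers for all $\alpha$ into one tower of affinoid perfectoid $v$-covers, using Lemma \ref{free} to produce compatible free integral lattices, is a genuine construction and is not a formal consequence of the definition. Finally, your construction of $\widetilde M$ by tensoring $M$ against $\mathcal O_X^{+,a}(U)$ is not obviously the same sheaf as $\nu_*\mathcal V$: the identification $\Gamma(U,\nu_*\mathcal V) \cong M\otimes_{A^{+,a}}\mathcal O_X^{+,a}(U)$ is itself a base-change statement that only follows once the descent is in place, so the intermediate object $\widetilde M$ does not shortcut the argument. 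In summary, the categorical bookkeeping in your proposal is fine, but the two load-bearing inputs (local almost freeness via a tower, and the point-wise plus cohomological approximation of the descent datum) are missing.
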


Next, we study the local structure of almost vector bundles. In the classical (i.e., non-almost) case, this is trivial because by the definition, a vector bundle of rank $N$ is locally isomorphic to the direct sum of $N$ copies of the structure sheaf. However, as can be seen from Definition \ref{defi: almost vb intro}, this is not trivial in almost mathematics. For example, when $X$ is a perfectoid point, by definition, an almost vector bundle on $X_{\mathrm{an}}^{+,a}$ is equivalent to an almost free module of rank $N$ over $A^+$, but an almost free module is not necessarily isomorphic to the direct sum of $N$ copies of $A^{+,a}$. However, we find that under the $v$-topology, the following theorem holds:

\begin{theorem}[Theorem \ref{Thm: SC over v}]\label{theo: locally free of almost vector bundle}
	Let $\mathcal{V}$ be an almost vector bundle of rank $N$ on $X_v^{+,a}$. Then there exists a $v$-covering $f: Y \to X$ of $X$ such that \[\mathcal{V}|_Y \cong \left( \mathcal{O}_Y^{+,a} \right)^{\oplus N}.\]
\end{theorem}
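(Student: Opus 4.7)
The plan is to combine the $v$-descent equivalence of Theorem~\ref{theo: v descent of almost vb} with a pointwise analysis on the $v$-geometric stalks of $X$. Using Theorem~\ref{theo: v descent of almost vb}, I would first view $\mathcal{V}$ as an analytic almost vector bundle, and, since the desired conclusion is local in the analytic topology, reduce to the case $X = \Spa(A, A^+)$ affinoid perfectoid. Next I would introduce the presheaf of trivializations
\[
    \mathrm{Triv}(\mathcal{V}) \colon (Y \to X) \;\longmapsto\; \bigl\{\text{isomorphisms } (\mathcal{O}_Y^{+,a})^{\oplus N} \xrightarrow{\;\sim\;} \mathcal{V}|_Y\bigr\}
\]
on the $v$-site of $X$; since $\mathcal{V}$ is by hypothesis a sheaf on the $v$-site, $\mathrm{Triv}(\mathcal{V})$ is itself a $v$-sheaf. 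The theorem to prove amounts to showing that $\mathrm{Triv}(\mathcal{V})$ admits a section on some $v$-cover of $X$, and by a standard spreading argument it is enough to verify non-emptiness on every $v$-geometric stalk, i.e., after pullback to $\Spa(C, C^+)$ with $C$ a complete algebraically closed perfectoid field and $C^+ \subseteq C$ an open bounded valuation subring.

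The problem therefore reduces to the pointwise assertion: every almost vector bundle $M$ of rank $N$ over $C^{+,a}$ is isomorphic to $(C^{+,a})^{\oplus N}$. Fix a pseudo-uniformizer $\pi \in C^+$. For each $n$, Definition~\ref{defi: almost vb intro} supplies an approximate basis $f_n \colon (C^{+,a})^{\oplus N} \to M$ with kernel and cokernel killed by $\pi^{1/p^n}$, represented by a matrix $T_n \in M_N(C^+)$ whose determinant has valuation tending to $0$. I would then exploit that $C^+$ is a $\pi$-adically complete valuation ring with $\mathbb{R}$-valued (densely ordered) value group and algebraically closed fraction field, so as to perform a Smith-normal-form-type reduction: by rescaling columns, extracting roots of unit entries, and inductively modifying the $T_n$, I would arrange them into a $\pi$-adic Cauchy sequence converging to an invertible matrix $T_\infty \in GL_N(C^{+,a})$. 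This $T_\infty$ then provides the required isomorphism $(C^{+,a})^{\oplus N} \xrightarrow{\sim} M$.

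The main obstacle is the coherent matching of the approximate bases for different $n$: one must remove the contributions of the successive kernels and cokernels without losing convergence. I would expect this to require a diagonalization-plus-perturbation argument, closely modelled on the Gabber--Ramero structure theory of almost finitely generated projective modules over valuation rings. Once the pointwise isomorphism is established, a final quasi-compact descent argument on the $v$-sheaf $\mathrm{Triv}(\mathcal{V})$ produces a single $v$-cover $Y \to X$ over which $\mathcal{V}|_Y \cong (\mathcal{O}_Y^{+,a})^{\oplus N}$.
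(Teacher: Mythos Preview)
There is a genuine gap in your pointwise claim. It is \emph{not} true that every almost free module of rank $N$ over $C^{+,a}$ is free when $C$ is merely a complete algebraically closed perfectoid field. Take $C = \mathbb{C}_p$: its value group is $p^{\mathbb{Q}} \subsetneq \mathbb{R}_{>0}$, so for any $r \in \mathbb{R}_{>0} \setminus p^{\mathbb{Q}}$ the fractional ideal $I_r = \{x \in C : |x| < r\}$ is almost free of rank $1$ but not isomorphic to $C^{\circ,a}$ (Example~\ref{Ex: AF not F}). Your proposed Cauchy-sequence/Smith-normal-form argument cannot repair this: the obstruction to matching the successive approximate bases lives in $\Ext^1_{C^\circ}(C^{\circ\circ}, C^{\circ\circ})$, and this group is nonzero precisely because $\mathbb{C}_p$ fails to be spherically complete (cf.\ Example~\ref{ex: free not almost projective}). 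So the ``diagonalization-plus-perturbation'' step you defer to the end is exactly where the argument breaks.

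The paper's remedy is to enlarge each residue field $k(x)$ not to an algebraic closure but to a \emph{spherically complete} perfectoid extension $\widetilde{k(x)}$ with value group all of $\mathbb{R}_{>0}$ (Lemma~\ref{Lem: extension} and Theorem~\ref{perfectoid}). Over such a field the supremum defining a rank-$1$ almost free module is actually attained, and Proposition~\ref{prop: ext of almost free modules sph cpt} supplies the $\Ext^1$-vanishing needed for the inductive step, so almost free implies free (Theorem~\ref{Thm: al free is free}). The global $v$-cover is then the single affinoid built from the product $\prod_{x \in X} \widetilde{k(x)}^+$; your ``standard spreading argument'' is made precise by Lemma~\ref{lemma of commut}, which shows that the almost-free tensor product commutes with this infinite product. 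In short, the missing idea is spherical completeness, not algebraic closedness.
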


We note that even in the case of a perfectoid point, Theorem \ref{theo: locally free of almost vector bundle} is still non-trivial. Moreover, the proof of Theorem \ref{theo: locally free of almost vector bundle} is a globalization the case of a perfectoid point. To prove the case of a perfectoid point, we make the following interesting observation:

\begin{theorem}[Theorem \ref{perfectoid}]
	Let $K$ be a spherically complete non-Archimedean field with residue field of characteristic $p$. If $K$ additionally satisfies:
\begin{enumerate}[(\roman*)]
	\item The value group of $K$ is $p$-divisible;
	\item The residue field of $K$ is perfect,
\end{enumerate}
then $K$ is a perfectoid field.
\end{theorem}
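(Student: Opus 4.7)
The plan is to verify the standard axioms for a perfectoid field: (1) $K$ is complete, (2) the value group $|K^{\times}|$ is non-discrete, (3) the residue characteristic is positive, and (4) the Frobenius map on $\mathcal{O}_K/p$ is surjective. Axiom (3) is given by hypothesis, axiom (1) is immediate from spherical completeness, and axiom (2) follows from $p$-divisibility, since for any nontrivial $\gamma \in |K^{\times}|$ the $p$-power roots $\gamma^{1/p^n}$ form an infinite family of distinct values accumulating at $1$. All the real content is in axiom (4): for any $a \in \mathcal{O}_K$, one must produce $b \in \mathcal{O}_K$ with $|b^p - a| \leq |p|$.

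The engine is the following \emph{improvement step} combining hypotheses (i) and (ii). Given $b \in \mathcal{O}_K$ with $\eta := a - b^p \neq 0$, I would produce $t \in \mathcal{O}_K$ satisfying $|t| = |\eta|^{1/p}$ and $|t^p - \eta| < |\eta|$ as follows: use $p$-divisibility to pick $\alpha \in K^{\times}$ with $|\alpha|^p = |\eta|$, so $\eta/\alpha^p \in \mathcal{O}_K^{\times}$; perfectness of the residue field then supplies $v \in \mathcal{O}_K^{\times}$ whose reduction is a $p$-th root of the reduction of $\eta/\alpha^p$, and $t := v\alpha$ works. Expanding $(b+t)^p$ and using $|\binom{p}{k}| \leq |p|$ for $1 \leq k \leq p-1$ then yields
\[
|(b+t)^p - a| \leq \max\bigl(|t^p - \eta|,\ |p| \cdot |t|\bigr).
\]
The first entry is $< |\eta|$ by construction; the second equals $|p| \cdot |\eta|^{1/p}$, which is $< |\eta|$ precisely when $|\eta| > |p|^{p/(p-1)}$. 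Since $|p|^{p/(p-1)} < |p|$, the step strictly decreases the error whenever the error still exceeds $|p|$.

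To upgrade this to a global existence statement, I would iterate by transfinite recursion. Start from any lift $b_0 \in \mathcal{O}_K$ of a $p$-th root of $\bar a$ in the residue field, so $\epsilon_0 := |b_0^p - a| < 1$. At a successor stage $\alpha + 1$, if $\epsilon_\alpha \leq |p|$ stop; otherwise apply the improvement step to obtain $b_{\alpha+1} = b_\alpha + t_\alpha$ with $|t_\alpha| = \epsilon_\alpha^{1/p}$ and $\epsilon_{\alpha+1} < \epsilon_\alpha$. At a limit stage $\lambda$ where the recursion has not yet halted (so $\epsilon_\beta > |p|$ for all $\beta < \lambda$), the closed balls $B(b_\beta, \epsilon_\beta^{1/p})$ form a nested family by the ultrametric inequality, and spherical completeness supplies a point $b_\lambda$ in the intersection. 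Expanding $b_\lambda^p$ around $b_\beta$ and invoking $\epsilon_\beta > |p|^{p/(p-1)}$ gives $|b_\lambda^p - a| \leq \epsilon_\beta$ for every $\beta < \lambda$, so $\epsilon_\lambda \leq \inf_{\beta < \lambda}\epsilon_\beta$. The resulting transfinite sequence $(\epsilon_\alpha)$ is weakly decreasing and strictly decreasing at successors, forcing it to have countable length (any well-ordered subset of $\mathbb{R}$ is countable). Hence the recursion terminates at some countable ordinal with $\epsilon_\alpha \leq |p|$, producing the desired $b$.

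The main obstacle I expect is the arithmetic threshold $|p|^{p/(p-1)}$ arising from the mixed-characteristic cross-terms in the $p$-th power expansion: the one-step improvement fails below this threshold, so the argument must confirm $|p|^{p/(p-1)} < |p|$ and arrange for the recursion to halt at or above $|p|$ rather than grinding down into the bad regime. A secondary subtlety is that the approximants $b_\alpha$ generally do not form a Cauchy net, so ordinary completeness cannot be used to pass to limits; spherical completeness is essential precisely because it delivers limits of nested closed balls whose radii $\epsilon_\alpha^{1/p}$ need not tend to zero.
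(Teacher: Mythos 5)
Your proposal is correct, but it takes a genuinely different route from the paper's. The paper proceeds globally: it writes $L = \widehat{\overline K}$ (which is perfectoid), uses spherical completeness via Ingleton's non-Archimedean Hahn--Banach theorem to extend $\mathrm{id}_K$ to a $K$-linear contraction $f\colon L \to K$, transports the Frobenius section from $L$ back to $K$ to build a map $\sigma\colon \mathcal O_K/\varpi^p \to \mathcal O_K/\varpi$ with $\sigma(x^p y) = x\,\sigma(y)$, and then uses $p$-divisibility plus perfectness of the residue field to decompose any $x$ as $y^p(1+u)$ with $u$ topologically nilpotent, deriving a contradiction from $\sigma$ failing to be injective. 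You instead run a transfinite Newton iteration: the improvement step uses $p$-divisibility and perfectness in exactly the same spirit as the paper's decomposition $x = y^p(1+u)$, and spherical completeness enters at limit ordinals via intersections of nested balls rather than through Hahn--Banach. Both arguments turn on the same arithmetic threshold ($\varphi$ being controllable down to error $|p|$, not below), which in yours appears explicitly as the constant $|p|^{p/(p-1)}$ from the binomial cross-terms. The paper's proof is shorter and hides the bookkeeping inside the functional-analytic black box; yours is more elementary and self-contained, at the cost of managing a transfinite recursion and verifying that the strictly decreasing error sequence has countable length and respects nesting past limit stages. Your handling of all of those points (nesting, the limit-stage error estimate under $\epsilon_\beta > |p|^{p/(p-1)}$, and termination from well-orderness of the reals) is correct.
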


\begin{remark}
    The assumption that $K$ is spherically complete is necessary. For a counterexample, consider the complete residue field of a type $4$ point on a Berkovich curve over $\mathbb{C}_p$.
\end{remark}

As a corollary, we have:

\begin{corollary}[Lemma \ref{Lem: extension}]\label{cor: embed to sph cpt perfectoid}
	Let $K$ be a non-Archimedean field with residue field of characteristic $p$. Then there exists a non-Archimedean field extension $\widetilde{K}$ of $K$ such that $\widetilde{K}$ is spherically complete and a perfectoid field. Moreover, the value group of $\widetilde{K}$ can be selected to be $\mathbb{R}_{>0}$.
\end{corollary}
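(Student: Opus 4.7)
The plan is to take $\widetilde K$ to be a Hahn (Mal'cev--Neumann) power series field, which is spherically complete by construction, and then invoke Theorem~\ref{perfectoid} directly. Spherical completeness comes for free from the Hahn construction, and the hypotheses on the residue field and value group can be arranged by choosing the coefficient ring.

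First I construct the target. Let $k$ be a perfect field of characteristic $p$ containing the residue field of $K$ (for example, an algebraic closure of that residue field). In equal characteristic $p$, form the Hahn series field
\[
    \widetilde K := k((t^{\mathbb{R}})) = \left\{ \sum_{\gamma \in \mathbb{R}} a_\gamma t^\gamma : a_\gamma \in k,\ \mathrm{supp}(a_\bullet) \text{ well-ordered} \right\},
\]
with valuation $v(\sum a_\gamma t^\gamma) = \min\{\gamma : a_\gamma \neq 0\}$; in mixed characteristic $(0,p)$, use the analogous Hahn-type construction over the Witt vectors $W(k)$ after inverting $p$. Standard facts about Hahn series show that $\widetilde K$ is a non-Archimedean field with residue field $k$, value group $\mathbb{R}$ (hence $\mathbb{R}_{>0}$ multiplicatively), and is spherically complete. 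Since $k$ is perfect and $\mathbb{R}$ is $p$-divisible, Theorem~\ref{perfectoid} applies and gives that $\widetilde K$ is perfectoid.

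It remains to exhibit a valuation-preserving embedding $K \hookrightarrow \widetilde K$. This is a form of Krull's embedding theorem for valued fields: I would lift a transcendence basis of $K$ (over its prime field, compatibly with residues and values) into $\widetilde K$, and then extend the embedding to all of $K$ by passing to limits, using spherical completeness of $\widetilde K$ to guarantee convergence.

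The main obstacle will be this embedding step, particularly in mixed characteristic $(0,p)$, where the Hahn--Witt construction is less canonical than in equal characteristic and one must verify that the embedding respects Teichmüller lifts. In equal characteristic the embedding is classical, following from the theory of maximal immediate extensions (under Kaplansky's hypotheses) or a direct transfinite induction. Throughout, spherical completeness of $\widetilde K$ is what makes the inductive convergence arguments go through.
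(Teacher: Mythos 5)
Your overall route matches the paper's: construct a large spherically complete field with perfect residue field and value group $\mathbb{R}_{>0}$, embed $K$ into it, and apply Theorem~\ref{perfectoid} to conclude it is perfectoid. The paper's proof is essentially a citation: it invokes Robert's universal field construction (\cite[\S 2.2]{robert2000course}), which packages both the target field and the embedding into a black box (the value-group claim is \cite[Proposition 2.2.3]{robert2000course}, and the residue field is perfect because Robert's field is algebraically closed, so Theorem~\ref{perfectoid} applies).

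The gap you flag is real, and it is exactly the part the citation replaces. In equal characteristic $p$ your plan is sound: $k((t^{\mathbb{R}}))$ is spherically complete and Kaplansky's theory of maximal immediate extensions gives the embedding. In mixed characteristic, however, ``the analogous Hahn-type construction over the Witt vectors $W(k)$ after inverting $p$'' is not a construction: $W(k)[1/p]$ has value group $p^{\mathbb{Z}}$, and a naive Hahn-series ring over $W(k)$ is neither a field nor has value group $\mathbb{R}_{>0}$. The correct object is Poonen's maximally complete field (equivalently Krasner's $p$-adic Mal'cev--Neumann series $\sum_{\gamma}[a_\gamma]p^{\gamma}$ with well-ordered real support and carry-twisted multiplication), whose ring structure is genuinely delicate; and the embedding of $K$ into it needs either Kaplansky's maximal immediate extension theorem together with a further transcendental extension to enlarge the value group to $\mathbb{R}_{>0}$, or an appeal to the universal property of such a field. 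To close your proof you should either cite the literature for this mixed-characteristic spherical completion (as the paper does) or carry out that construction and embedding in full.
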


Corollary \ref{cor: embed to sph cpt perfectoid} is the core of the proof of Theorem \ref{theo: locally free of almost vector bundle}, and we will discuss these results in detail in Section \ref{Sec: Spherical completeness}.

Furthermore, we observe that almost vector bundles essentially reflect the behavior of locally free \( \mathcal{O}_X^+ \)-modules (honest, not almost) under the $arc$-topology of $\Spf(A^+)$. The $arc$-topology was introduced in \cite{BhattMathew21} (for schemes) and \cite{Bhatt_2022} (for formal schemes) as a refinement of the traditional $v$-topology; unlike the $v$-topology, the $arc$-topology only detects the Berkovich spectrum (rather than the full adic spectrum). We further prove the following theorem.
\begin{theorem}[Section \ref{section:prove arc=v}]\label{theo: avb=arc vb}
    Adopt the notation of Theorem \ref{theo: v descent of almost vb}. Let \( \mathrm{Perf}_{A^+}^{\mathrm{tf}} \) denote the category of \( A^{\circ\circ} \)-torsion-free affinoid integral perfectoid formal schemes over \( A^+ \), endowed with the $arc$-topology. Let \( \mathcal{O}_{\mathrm{Perf}_{A^+}^{\mathrm{tf}}} \) be its structure sheaf (cf. Definition \ref{def: arc sheaf}). Then there is an equivalence between the category of almost vector bundles on \( X_{\mathrm{an}}^{+,a} \) and the category of locally free \( \mathcal{O}_{\mathrm{Perf}_{A^+}^{\mathrm{tf}}} \)-modules on \( \mathrm{Perf}_{A^+}^{\mathrm{tf}} \).
\end{theorem}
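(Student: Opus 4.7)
I will construct mutually quasi-inverse functors $\Phi$ and $\Psi$ using the adjunction $(-)^a \dashv (-)_\ast$ between honest $B^+$-modules and almost $B^{+,a}$-modules. The two earlier theorems of the paper do most of the work: Theorem \ref{theo: v descent of almost vb} lets us pass freely between the analytic and $v$-sites, while Theorem \ref{theo: locally free of almost vector bundle} supplies a $v$-cover on which an almost vector bundle is honestly isomorphic to $\bigl(\mathcal{O}^{+,a}\bigr)^{\oplus N}$, not merely $\alpha$-close to it for each $\alpha$. The $A^{\circ\circ}$-torsion-free hypothesis built into $\mathrm{Perf}_{A^+}^{\mathrm{tf}}$ is precisely what makes $(-)_\ast$ preserve the rank after trivialization.

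\textbf{The functor $\Phi$.} Given an almost vector bundle $\mathcal{V}$ on $X_{\mathrm{an}}^{+,a}$, first extend it to $X_v^{+,a}$ by Theorem \ref{theo: v descent of almost vb}. For each $\Spf(B^+) \in \mathrm{Perf}_{A^+}^{\mathrm{tf}}$ the pair $(B,B^+)$ is perfectoid Tate--Huber, so there is a natural morphism $\Spa(B,B^+) \to X$ in $X_v$; I define
\[
\Phi(\mathcal{V})(\Spf(B^+)) := \bigl(\mathcal{V}(\Spa(B,B^+))\bigr)_\ast.
\]
By Theorem \ref{theo: locally free of almost vector bundle} there is a $v$-cover $\Spa(C,C^+) \to \Spa(B,B^+)$ trivializing $\mathcal{V}$, and since $C^+$ is $A^{\circ\circ}$-torsion-free one has $\bigl((C^{+,a})^{\oplus N}\bigr)_\ast = (C^+)^{\oplus N}$. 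A short valuation-theoretic check shows that the induced $\Spf(C^+) \to \Spf(B^+)$ is an $arc$-cover, so $\Phi(\mathcal{V})$ is $arc$-locally free of the correct rank.

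\textbf{The functor $\Psi$ and mutual inversion.} In the opposite direction, for an $arc$-locally free $\mathcal{O}_{\mathrm{Perf}_{A^+}^{\mathrm{tf}}}$-module $\mathcal{F}$ I set
\[
\Psi(\mathcal{F})(\Spa(B,B^+)) := \mathcal{F}(\Spf(B^+))^a,
\]
view this as a presheaf on $X_v^{+,a}$, sheafify, and restrict back to $X_{\mathrm{an}}^{+,a}$ via Theorem \ref{theo: v descent of almost vb}. An $arc$-trivialization $\mathcal{F}|_{\Spf(C^+)} \cong (C^+)^{\oplus N}$ yields $\Psi(\mathcal{F})|_{\Spa(C,C^+)} \cong \bigl(\mathcal{O}^{+,a}\bigr)^{\oplus N}$, provided $\Spa(C,C^+) \to \Spa(B,B^+)$ is a $v$-cover. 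The natural transformations $\Phi\Psi \Rightarrow \mathrm{id}$ and $\Psi\Phi \Rightarrow \mathrm{id}$ then reduce, after passing to a common trivializing cover, to the standard identities $(M^a)_\ast \cong M$ for $A^{\circ\circ}$-torsion-free $M$ and $(N_\ast)^a \cong N$ for any almost $N$.

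\textbf{Main obstacle.} The principal difficulty, and what I expect Section \ref{section:prove arc=v} to address, is the precise comparison of topologies: one must show that on $\mathrm{Perf}_{A^+}^{\mathrm{tf}}$ the $arc$-topology and the $v$-topology (pulled back along $\Spf(B^+) \mapsto \Spa(B,B^+)$) generate the same sheaves. The nontrivial direction is lifting higher-rank valuations given only the lifting of rank-$\leq 1$ valuations guaranteed by the $arc$-condition; here one can localize along a maximal generization and reduce to the case of a perfectoid field, where Corollary \ref{cor: embed to sph cpt perfectoid} provides a spherically complete perfectoid extension into which any valuation extends. Once this topological equivalence is in hand, $\Phi$ and $\Psi$ descend cleanly along their respective covers, and the equivalence of categories follows.
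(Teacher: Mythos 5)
Your broad strategy — pass between $\mathcal{O}_X^{+,a}$-modules and honest modules via the adjoints $(-)_*$ and $(-)^a$, trivialize using Theorems \ref{theo: v descent of almost vb} and \ref{theo: locally free of almost vector bundle}, and then compare the $arc$- and $v$-topologies — is the correct skeleton, and it parallels what the paper does. But there are two genuine gaps, one of which points to a tool choice that would not work as stated.

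First, the proposal never establishes what the sections of $\mathcal{O}_{\mathrm{Perf}_{A^+}^{\mathrm{tf}}}$ actually are. The paper devotes a substantial result (Theorem \ref{theo: structure arc= power bounded}) to identifying $\Gamma(\Spf(S),\mathcal{O}_{\mathrm{Perf}_{A^+}^{\mathrm{tf}}})\cong S_*$, i.e.\ the $arc$-sheafification of $\mathbb{G}_a$ sends $\Spf(S)$ to the power-bounded elements of $S[\tfrac{1}{\varpi}]$, not to $S$ itself. Without this, your functor $\Phi$ does not obviously land in $\mathcal{O}_{\mathrm{Perf}_{A^+}^{\mathrm{tf}}}$-modules (for instance, $(B^+,B^{\circ\circ})$ could be a non-isomorphism that the $arc$-topology cannot distinguish, and sheafification identifies them), and the claim that $\Phi(\mathcal{V})$ is locally free of the correct rank cannot be verified. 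Relatedly, there is a sorting-out to do between $S$, its integral closure $S^+$ in $S[\tfrac{1}{\varpi}]$, and $S^\circ$: the paper handles this with a separate \v{C}ech argument showing that a $\mathbb{G}_a^\circ$-locally free sheaf has the same sections on $\Spa(S,S^+)$ and $\Spa(S,S^\circ)$. Your write-up treats all of these rings as interchangeable.

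Second, your remedy for the topology comparison is the wrong tool. You correctly identify that one must show an $arc$-cover of formal schemes induces a $v$-cover of the associated affinoid perfectoid spaces, and that this amounts to lifting higher-rank valuations from a rank-$\leq 1$ lifting hypothesis. But Corollary \ref{cor: embed to sph cpt perfectoid} produces a rank-$1$ spherically complete extension and does not by itself produce higher-rank lifts; spherical completeness plays no role here. The paper's argument is more elementary: given a test valuation $(K,K^+)$ with $K^+\subseteq K^\circ$, pass to the maximal generization $(K,K^\circ)$, apply the $arc$-condition to obtain a rank-$1$ extension $L^\circ\supseteq K^\circ$ with a map from $S_2$, and then extend $K^+$ to a valuation subring $L^+\subseteq L^\circ$ by the usual going-up for valuation rings; the construction of $(S_i')^+$ as the integral closure of the ring generated by $A^+$ and $S_i^{\circ\circ}$ then guarantees $y\bigl((S_2')^+\bigr)\subseteq L^+$. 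I recommend you work out Theorem \ref{theo: structure arc= power bounded} first; once you have it, your functors $\Phi$ and $\Psi$ can be defined cleanly, and the equivalence becomes the composition of the $(-)^a/(-)_*$ equivalence with an $arc$/$v$ comparison that needs no spherical completeness.
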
 

\subsection{Link with previous works}

The $v$-descent of vector bundles is not a new topic in perfectoid theory. We briefly summarize the previous works. Adopt the notation of Subsection \ref{subsec:main}. The work \cite{kedlaya2019relative} established that vector bundles on \(X = \Spa(A, A^+)\) possess the property of \(v\)-descent.
\begin{theorem}[{\cite[Theorem 3.5.8]{kedlaya2019relative}}]\label{theo: rational v descent}
	The following categories are equivalent:
	\begin{enumerate}
		\item $\mathcal{O}_X$-vector bundles  for the  v-topology;
         \item $\mathcal{O}_X$-vector bundles for the \'{e}tale topology;
		\item $\mathcal{O}_X$-vector bundles  for the analytic topology;
		\item Finite-rank projective \(A\)-modules.
	\end{enumerate}
\end{theorem}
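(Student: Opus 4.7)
The plan is to run a cyclic argument. There are natural functors $(4) \to (3) \to (2) \to (1)$ given by associated-sheaf constructions (and the realization of a finite projective $A$-module as an analytic vector bundle); each is fully faithful once $\mathcal{O}_X$ is shown to be acyclic in the relevant topology, so the content lies entirely in essential surjectivity.

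For $(3) \Leftrightarrow (4)$, I would invoke the Tate-type acyclicity for affinoid perfectoid spaces from \cite{scholze2012perfectoid}: $\Gamma(X_{\mathrm{an}}, \mathcal{O}_X) = A$ and $H^i(X_{\mathrm{an}}, \mathcal{O}_X) = 0$ for $i > 0$. Combined with the \v{C}ech classification of vector bundles by $GL_n$-cocycles and Grothendieck's descent for finite projective modules, this yields the equivalence. For $(2) \Leftrightarrow (3)$ the same argument runs one topology finer with étale acyclicity of $\mathcal{O}_X$ as input; this in turn follows from Scholze's almost purity theorem, applied after tilting to characteristic $p$.

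The essential new content is $(1) \Leftrightarrow (3)$, which I would attack in two layers. \textbf{Layer 1:} establish $v$-acyclicity, $H^i_v(X, \mathcal{O}_X) = 0$ for $i > 0$. The key reduction is that any $v$-cover admits a refinement by some $Y \to X$ with $Y = \mathrm{Spa}(B, B^+)$ strictly totally disconnected, so that $B$ is a product of perfectoid fields. The \v{C}ech complex $A \to B \to B \,\widehat{\otimes}_A B \to \cdots$ can then be analyzed directly; after tilting to characteristic $p$, continuous splittings of the relevant surjections of products of perfectoid fields produce contracting homotopies. \textbf{Layer 2:} bootstrap from $\mathcal{O}_X$-acyclicity to $GL_n$-descent. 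Given a $v$-vector bundle $\mathcal{V}$ on $X$, one shows that $\mathcal{V}(X)$ is a finite projective $A$-module whose analytification recovers $\mathcal{V}$; locally on a trivializing cover the \v{C}ech complex with values in $\mathcal{V}$ is $n$ copies of that for $\mathcal{O}_X$, so Layer 1 delivers the required vanishing, and the rank-$n$ local freeness upgrades this to projectivity of $\mathcal{V}(X)$.

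The hard part will be Layer 1 — specifically the splitting analysis over strictly totally disconnected bases in mixed characteristic, where the tilting trick and explicit homotopies must be combined carefully with the profinite structure of $\pi_0(Y)$. Layer 2 should then be essentially formal, reducing to projective-module gluing and a standard non-abelian $H^1$ argument once the $\mathcal{O}_X$-level vanishing is in hand.
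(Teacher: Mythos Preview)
The paper does not prove this theorem; it is quoted from \cite[Theorem 3.5.8]{kedlaya2019relative} as background in the introduction, with no argument given. So there is no ``paper's own proof'' to compare against.

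On the merits of your sketch: the overall cyclic strategy and the reductions for $(3)\Leftrightarrow(4)$ and $(2)\Leftrightarrow(3)$ are standard and fine. For $(1)\Leftrightarrow(3)$, your Layer~1 (refine to a strictly totally disconnected cover, tilt, and produce homotopies) is the right shape, though the phrase ``continuous splittings of the relevant surjections of products of perfectoid fields'' hides real work. The more substantive issue is your Layer~2: the passage from $\mathcal{O}_X$-acyclicity to $GL_n$-descent is \emph{not} ``essentially formal.'' Vanishing of the abelian $H^1(X_v,\mathcal{O}_X)$ does not by itself trivialize a nonabelian $GL_n$-cocycle; in the actual Kedlaya--Liu argument (and in Scholze's treatment) this step goes through a matrix approximation/factorization lemma and a successive-approximation argument to bring a $v$-local transition matrix into $GL_n$ of the base. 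Your remark that ``the \v{C}ech complex with values in $\mathcal{V}$ is $n$ copies of that for $\mathcal{O}_X$'' presupposes a global trivialization of $\mathcal{V}$, which is exactly what you are trying to produce. So Layer~2 needs its own nontrivial input, not just the abelian vanishing from Layer~1.
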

This theorem concerns locally free modules with \(\mathcal{O}\)-coefficients (i.e., rational coefficients), not with \(\mathcal{O}_X^+\)-coefficients (i.e., integral coefficients). For \(\mathcal{O}_X^+\)-coefficients, Heuer proved the following theorem.
\begin{theorem}[{\cite[Theorem 2.21]{heuer2024gtorsorsperfectoidspaces}}]\label{theo: v descent of O^+ bundles}
	The following categories are equivalent:
	\begin{enumerate}
		\item Locally free \(\mathcal{O}_X^+\)-modules on \(X_v\);
		\item Locally free \(\mathcal{O}_X^+\)-modules on \(X_{\text{\'et}}\).
	\end{enumerate}
\end{theorem}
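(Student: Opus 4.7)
The plan is to show that the natural restriction functor from \(X_v\) to \(X_{\mathrm{\acute et}}\) is fully faithful and essentially surjective on locally free \(\mathcal{O}_X^+\)-modules. Well-definedness of restriction is automatic since every étale cover is a v-cover, so only these two properties require proof.

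Full faithfulness reduces, by étale-local trivialization of the source and target, to the v-sheaf property of \(\mathcal{O}_X^+\) itself on affinoid perfectoids. For an affinoid perfectoid \(X = \Spa(A,A^+)\) and a v-cover \(Y \to X\), the sequence
\[
A^+ \longrightarrow \mathcal{O}_X^+(Y) \rightrightarrows \mathcal{O}_X^+(Y \times_X Y)
\]
is exact; this follows from Scholze's almost v-acyclicity of \(\mathcal{O}_X^{+,a}\) on affinoids \cite{scholze2012perfectoid}, combined with torsion-freeness of \(A^+\) to eliminate the almost ambiguity. One then upgrades to locally free modules of higher rank by taking matrix entries.

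For essential surjectivity, view a v-locally free \(\mathcal{O}_X^+\)-module \(\mathcal{F}\) of rank \(n\) as a \(\mathrm{GL}_n(\mathcal{O}_X^+)\)-torsor on \(X_v\). The strategy is to tilt to characteristic \(p\): tilting induces equivalences of both the v- and étale sites of \(X\) and \(X^\flat\), and for a pseudo-uniformizer \(\pi \in A\) with compatible tilt \(\pi^\flat\) one has \(\mathcal{O}_X^+/\pi \cong \mathcal{O}_{X^\flat}^+/\pi^\flat\). The Bhatt–Scholze theorem for perfect rings quoted in the Introduction, applied to \(\mathcal{F}/\pi\) transported to the tilted side, produces a Zariski (in particular étale) trivialization modulo \(\pi^\flat\), which translates back to an étale trivialization of \(\mathcal{F}/\pi\) on \(X\). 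One then lifts this trivialization \(\pi\)-adically using \(\pi\)-adic completeness of \(\mathcal{O}_X^+\) and formal smoothness of \(\mathrm{GL}_n\) over \(\mathbb{Z}\).

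The main obstacle is the integral (non-almost) lifting step: successive obstructions to deforming the étale trivialization from mod \(\pi^k\) to mod \(\pi^{k+1}\) live in \(H^2_{\mathrm{\acute et}}\) of an \(\mathfrak{gl}_n\)-valued coherent sheaf, and one must verify that these classes die after a further étale refinement. Alternatively, one could route through the almost mathematics framework of the present paper: Theorem \ref{theo: v descent of almost vb} yields v-descent after almost-ification, and the almost isomorphism can then be promoted to an honest one by using the explicit structure of \(\mathrm{GL}_n\)-torsors, torsion-freeness of locally free \(\mathcal{O}_X^+\)-modules, and a comparison between v- and étale Picard groups.
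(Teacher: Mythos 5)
This statement is not proved in the paper; it is quoted as background from \cite{heuer2024gtorsorsperfectoidspaces}, so there is no internal proof to compare against. Evaluating the proposal on its own, it has one minor misstatement and one genuine gap.

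For full faithfulness, the conclusion is correct but the stated justification is not: torsion-freeness of $A^+$ does not upgrade an almost isomorphism to an isomorphism in general. For instance, if $A^+\subseteq B\subseteq A^\circ$ with $\pi^{1/p^n}B\subseteq A^+$ for all $n$ and $B\neq A^+$, then $A^+\hookrightarrow B$ is an almost isomorphism of torsion-free modules that is not an isomorphism. What one actually needs is the \emph{honest} $H^0$ statement, namely that $H^0(X_v,\mathcal{O}_X^+)=A^+$ for $X=\Spa(A,A^+)$ affinoid perfectoid; this is a theorem of Scholze (the same reference where almost vanishing in higher degrees appears), proved by a separate argument about integral closure rather than by divisibility tricks. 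Citing the correct result repairs this step.

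For essential surjectivity, the gap is substantive. The tilting step and the appeal to the Bhatt--Scholze theorem need care (their $v$-topology is on affine schemes over perfect rings, not on adic spaces, and one must account for the passage between the two), but the real obstruction is exactly the lifting step you flag: the successive obstructions to lifting an \'etale trivialization from mod $\pi^k$ to mod $\pi^{k+1}$ live in $H^2_{\text{\'et}}$ with coefficients in an $\mathcal O_X^+$-module, and for affinoid perfectoids these groups are \emph{almost} zero but not honestly zero. Nothing in the proposal shows they vanish after refinement, and it is precisely this almost-versus-honest discrepancy that the present paper is about (compare Example \ref{ex: et bundle not descent}, which shows honest \'etale $\mathcal{O}_X^+$-bundles need not descend analytically). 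Your fallback suggestion of routing through Theorem \ref{theo: v descent of almost vb} faces the same problem in reverse: that theorem only produces an almost isomorphism, and promoting it to an honest isomorphism of $\mathcal{O}_X^+$-modules is a genuinely additional step that you have not supplied. So the essential surjectivity argument, as written, does not close.
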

However, as mentioned earlier, locally free \(\mathcal{O}_X^+\)-modules do not possess \'{e}tale descent. The primary reason for this phenomenon is that almost purity implies that for a morphism \(f: R \to S\) of integral perfectoid rings, if the localization \(f[\frac{1}{p}]\) is finite étale, then \(f\) itself is only almost finite étale, not necessarily finite étale. This is precisely the motivation for introducing almost mathematics in our consideration.

More recently, Bhatt--Lurie defined the Riemann--Hilbert functor for arbitrary $p$-adic formal schemes. In the process, they defined overconvergent sheaves on $X = \Spa(A, A^+)$ and proved the following theorem:
\begin{theorem}[{\cite[Theorem 5.2.2 \& Remark 5.2.4]{Bhatt2025padichodge}}]
Let $\varpi$ be a pseudo-uniformizer of $A$, and let $\widehat{\mathrm{Shv}}(X, \mathcal{O}_X^{+,a})^{oc}$ be the derived category of $\varpi$-adically complete, overconvergent almost $\mathcal{O}_X^{+,a}$-sheaves. Let $\widehat{D}(A^{+,a})$ be the derived category of $\varpi$-complete almost modules on $A^+$. Then the functor $R\Gamma(X, -)$ defines an equivalence between $\widehat{\mathrm{Shv}}(X, \mathcal{O}_X^{+,a})^{oc}$ and $\widehat{D}(A^{+,a})$.
\end{theorem}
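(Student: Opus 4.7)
The plan is to produce an inverse $F \colon \widehat{D}(A^{+,a}) \to \widehat{\mathrm{Shv}}(X, \mathcal{O}_X^{+,a})^{oc}$ to the global sections functor and verify that the two composites are equivalences. For $M \in \widehat{D}(A^{+,a})$ and a rational open $U = \Spa(B, B^+) \subseteq X$, I would define $F(M)(U)$ as the derived $\varpi$-adically completed tensor product $M \widehat{\otimes}^L_{A^{+,a}} B^{+,a}$, sheafified and restricted to the overconvergent site so that the result lies in $\widehat{\mathrm{Shv}}(X, \mathcal{O}_X^{+,a})^{oc}$. The crucial input is Scholze's acyclicity theorem: for any $U = \Spa(B, B^+)$ affinoid perfectoid one has $R\Gamma(U, \mathcal{O}_X^{+,a}) \simeq B^{+,a}$ concentrated in degree zero. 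This produces a canonical adjunction unit and counit between $R\Gamma$ and $F$.

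For the counit $R\Gamma \circ F \Rightarrow \mathrm{id}$, reducing modulo $\varpi^n$ turns the problem into showing $R\Gamma(X, (M/\varpi^n) \otimes^L_{A^{+,a}/\varpi^n} (\mathcal{O}_X^{+,a}/\varpi^n)) \simeq M/\varpi^n$. A Čech-to-cohomology computation along a rational cover, combined with dévissage on the $\varpi$-adic filtration, reduces this to Scholze's acyclicity for $\mathcal{O}_X^{+,a}$. Passing to the derived $\varpi$-adic limit and using $\varpi$-completeness of $M$ gives the counit as an equivalence in $\widehat{D}(A^{+,a})$.

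For the unit $\mathrm{id} \Rightarrow F \circ R\Gamma$, overconvergence becomes essential. For an overconvergent sheaf $\mathcal{F}$, its value on $U = \Spa(B, B^+)$ is recovered as a filtered colimit of values on rational neighborhoods $V$ of $\overline{U}$, and on each such $V$ the restriction is controlled by the counit argument applied with $V$ in place of $X$. A Postnikov/limit argument in the derived $\varpi$-complete almost $\infty$-category of sheaves extends the conclusion from (completions of) bounded complexes of overconvergent almost quasi-coherent modules to arbitrary objects of $\widehat{\mathrm{Shv}}(X, \mathcal{O}_X^{+,a})^{oc}$.

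The main obstacle is the three-way interplay between derived $\varpi$-adic completion, the overconvergent restriction that defines the site, and the almost localization relative to $A^{\circ\circ}$. Concretely, one must show that tensoring with the $\varpi$-complete overconvergent almost structure sheaf is flat enough for global sections to commute with the construction $F$, and that overconvergence is precisely the condition required to eliminate the spurious contributions arising from the filtered colimit over shrinking rational neighborhoods; without it, the unit picks up extra terms supported on the boundary of rational opens. I expect this is where most of the technical effort in the proof goes, particularly in coordinating derived completions with filtered colimits over overconvergent neighborhoods.
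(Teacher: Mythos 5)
The paper does not prove this statement: it is quoted verbatim as a background result due to Bhatt--Lurie and is attributed, with a precise pointer, to \cite[Theorem 5.2.2 \& Remark 5.2.4]{Bhatt2025padichodge}. It appears in the ``Link with previous works'' discussion purely to contextualize the authors' notion of almost vector bundles (they note that almost vector bundles are overconvergent and hence embed into $\widehat{D}(A^{+,a})$). There is therefore no proof in this paper against which your sketch can be compared; any assessment would have to be against Bhatt's own argument, which is not reproduced here.

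As a standalone sketch, the shape you describe --- building a candidate inverse $F(M) = M\widehat{\otimes}^L_{A^{+,a}} \mathcal{O}_X^{+,a}$, invoking Scholze's acyclicity of $\mathcal{O}_X^{+,a}$ on affinoid perfectoids for the counit, and leaning on overconvergence for the unit --- is a reasonable guess at the mechanism, and the reduction modulo $\varpi^n$ plus passage to the derived limit is exactly the standard dévissage one would expect. But the hardest and most specific part, namely what ``overconvergent'' buys and how it is used to make the unit an equivalence, is left at the level of a slogan (``eliminates spurious boundary contributions''). That is precisely the content of Bhatt's Theorem 5.2.2 and Remark 5.2.4, and without opening up the definition of the overconvergent site and verifying that $F$ lands there and that $R\Gamma$ is conservative on it, the sketch does not constitute a proof. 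Since the paper at hand simply cites this result rather than re-derive it, you should do the same: cite \cite{Bhatt2025padichodge} and do not attempt to reprove the theorem within this paper.
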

Under our definition, almost vector bundles are overconvergent. By this theorem, we can embed almost vector bundles into $\widehat{D}(A^{+,a})$. However, there is no evidence that the essential image lies in the degree-zero part.

Moreover, sheaves valued in the category of almost modules are also mentioned in the article \cite{Zavyalov_2025}. In \cite{Zavyalov_2025}, the authors point out that for a rigid space $X/\mathbb{C}_p$, the derived pushforward of a vector bundle with coefficients in $\widehat{\mathcal{O}}_X^+$ under the $v$-topology to an integral model of $X$ exhibits coherence in the sense of almost mathematics. In fact, the techniques of \cite{Zavyalov_2025} still work under our definition, hence most results of \cite{Zavyalov_2025} can be generalized to almost vector bundles.

\subsection{Acknowledgments}

The idea for this paper originated during the 2025 ``Algebra and Number Theory" summer school. The authors would like to express their sincere gratitude to the Institute of Mathematics and Systems Science of the Chinese Academy of Sciences, the School of Mathematical Sciences at the University of Chinese Academy of Sciences, and the School of Mathematical Sciences at Peking University for organizing the event, as well as to the professors for their insightful lectures.

\section*{Notations and Conventions}
\begin{itemize}
    \item All Tate rings are assumed to be complete.
    \item For any adic space $X$ and $x\in X$, denote by $k(x)$ (resp. $k(x)^+$) the \textbf{complete} residue field of $x$ (resp. the valuation ring of $k(x)$).
    \item For any ring $R$, an element $a\in R$ and an $R$-module $M$, denote by $M^{a-\text{tf}}$ the maximal $a$-torsion free quotient of $M$.
\end{itemize}

\section{Preliminaries on Adic Spaces}

Fix a sheafy Tate--Huber pair $(R,R^+)$. Let $X=\Spa(R,R^+)$. Fix a pseudo-uniformizer $\varpi\in R^+$. Fix a ring of definitions $R_0\subseteq R^+$ of $R$.

\begin{theorem}\label{theo: bi cartesion of of stalks}
For $x\in X$, let $\mathcal{O}_{X,x}$ (resp. $\mathcal{O}_{X,x}^+$) be the stalk of the sheaf $\mathcal{O}_{X}$ (resp. $\mathcal{O}_{X}^+$) at $x$. Then the diagram
\[
\begin{tikzcd}
\mathcal{O}_{X,x}^+ \arrow[r] \arrow[d] & k(x)^+ \arrow[d] \\
\mathcal{O}_{X,x} \arrow[r] & k(x)
\end{tikzcd}
\]
is bi-cartesian.
\end{theorem}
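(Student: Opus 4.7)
The plan is to recognise that the square being bi-cartesian (in rings, equivalently in abelian groups) is equivalent to the Mayer--Vietoris style sequence
\[
0 \to \mathcal{O}_{X,x}^+ \to \mathcal{O}_{X,x} \oplus k(x)^+ \to k(x) \to 0, \qquad (a,b) \mapsto \bar a - b,
\]
being short exact. I would split the verification into a pullback part (exactness at the first two spots) and a surjectivity part at $k(x)$.

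For the pullback part, the claim is that $\mathcal{O}_{X,x}^+$ equals the preimage of $k(x)^+$ under $\mathcal{O}_{X,x} \to k(x)$; the nontrivial direction says that $|f(x)| \le 1$ forces $f \in \mathcal{O}_{X,x}^+$. Given a representative $\tilde f \in \mathcal{O}(U)$ for some rational neighborhood $U \ni x$, the subset $V = \{y \in U : |\tilde f(y)| \le 1\}$ is the rational subset $U(\{\tilde f,1\}/1) \subseteq U$ (the generating set contains $1 \in R$, so the openness condition is automatic). It contains $x$, and $\tilde f|_V \in \mathcal{O}^+(V)$, so $f$ lies in $\mathcal{O}_{X,x}^+$.

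For the surjectivity I need $\mathcal{O}_{X,x}/\mathcal{O}_{X,x}^+ \twoheadrightarrow k(x)/k(x)^+$. Any class in $k(x)/k(x)^+$ is represented by $\varpi^{-n} c'$ with $c' \in k(x)^+$ and $n \ge 0$, so it is enough to prove that $\mathcal{O}_{X,x}^+ \to k(x)^+$ has $\varpi$-adically dense image: once I can find $a \in \mathcal{O}_{X,x}^+$ with $c' - \bar a \in \varpi^n k(x)^+$, the element $\varpi^{-n} a \in \mathcal{O}_{X,x}$ lifts the given class. I establish this density in two layers. First, $\kappa(\mathfrak{p}_x)^+$ is $\varpi$-adically dense in $k(x)^+$, which is immediate from the construction of $k(x)$ as the completion of the residue field $\kappa(\mathfrak{p}_x)$ at the support $\mathfrak{p}_x = \{r \in R : v_x(r) = 0\}$, with topology defined by the powers of $\varpi$ on $\kappa(\mathfrak{p}_x)^+$. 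Second, $\kappa(\mathfrak{p}_x)^+$ lies in the image of $\mathcal{O}_{X,x}^+$: for $r/s \in \kappa(\mathfrak{p}_x)^+$ with $v_x(s) \ne 0$ and $v_x(r) \le v_x(s)$, passing to the rank-$1$ generization of $v_x$ and using that $\varpi$ is a pseudo-uniformizer, one picks $N$ with $v_x(\varpi^N) \le v_x(s)$; then the rational subset $U(\{\varpi^N, r\}/s)$ contains $x$, turns $s$ into a unit, and satisfies $|r/s| \le 1$ on it, so $r/s$ lifts to $\mathcal{O}^+$ on this neighborhood, hence to $\mathcal{O}_{X,x}^+$.

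The main obstacle I anticipate is the bookkeeping in the last step: producing a rational subset which simultaneously makes $s$ a unit and forces $|r/s|\le 1$, while genuinely containing $x$. The Tate hypothesis on $(R,R^+)$ is essential here (to make $\{\varpi^N, r\} R$ an open ideal), and one must run the argument via the rank-$1$ generization of $v_x$ rather than $v_x$ itself, since the estimate $v_x(\varpi^N) \le v_x(s)$ need not hold in the original higher-rank valuation group until one passes to the generization. Everything else (the pullback part, the reduction to density, the density of $\kappa(\mathfrak{p}_x)^+$ in $k(x)^+$) is comparatively formal.
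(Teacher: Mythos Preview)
Your pullback argument is exactly the paper's. For the cocartesian half the two proofs diverge. The paper interprets ``bi-cartesian'' in the category of rings and dispatches it in one line: since $\mathcal{O}_{X,x}=\mathcal{O}_{X,x}^+[\frac{1}{\varpi}]$, one has
\[
\mathcal{O}_{X,x}\otimes_{\mathcal{O}_{X,x}^+}k(x)^+ \;=\; k(x)^+\Big[\tfrac{1}{\varpi}\Big]\;=\;k(x),
\]
so the square is a pushout of rings with no further work. You instead prove the module-level statement, namely surjectivity of the difference map $\mathcal{O}_{X,x}\oplus k(x)^+\to k(x)$, and for this you genuinely need the $\varpi$-adic density of $\mathcal{O}_{X,x}^+$ in $k(x)^+$; the ring-pushout by itself does not give that surjectivity, so your extra effort is not wasted. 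Your density argument (lifting $r/s$ to $\mathcal{O}^+$ on a rational neighbourhood of the form $U(\{\varpi^N,r\}/s)$) is essentially a refined version of what the paper isolates separately as Lemma~\ref{lemma: calO_x/varpi=k^+(x)/varpi} (the isomorphism $\mathcal{O}_{X,x}^+/\varpi\cong k(x)^+/\varpi$). In other words, the paper splits the work: Theorem~\ref{theo: bi cartesion of of stalks} is the cheap ring-pushout, and the exactness on the right of the Mayer--Vietoris sequence---which is what is actually invoked later in Lemma~\ref{lemma: preimage of int module}---comes from the adjacent Lemma~\ref{lemma: calO_x/varpi=k^+(x)/varpi}. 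Your single argument packages both at once.

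One small remark on your write-up: the parenthetical ``in rings, equivalently in abelian groups'' is not literally true---pushout of rings (tensor product) and pushout of abelian groups (cokernel of the diagonal) are different conditions in general. In this particular square both hold, but for different reasons; it would be cleaner to say up front which one you are proving. Also, when you write $r/s\in\kappa(\mathfrak p_x)^+$, the elements $r,s$ live in $\mathcal{O}_{X,x}$ rather than in $R$ itself, so the rational-subset construction should be carried out on a small affinoid neighbourhood where $r,s$ are represented; this is harmless but worth saying.
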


\begin{proof}
    First, we show that this is a pullback diagram. Let $\Pi:\mathcal{O}_{X,x}\to k(x)$ be the natural projection. For $f\in\mathcal{O}_{X,x}$ such that $\Pi(f)\in k(x)^+$, we need to show that $f\in \mathcal{O}_{X,x}^+$. Suppose $x\in U$ is an open neighborhood of $x$ in $X$ such that $f\in \Gamma(U,\mathcal{O}_X)$, then $\Pi(f)\in k(x)^+$ implies that $|f|_x\leq 1$, where $|-|_x$ is the valuation corresponding to $x$. Take $U'\subseteq U$ to be the rational open subset defined by $|f|\leq 1$, then $U'$ is also an open neighborhood of $x$ and by assumption, we have $f\in\Gamma(U',\mathcal{O}_X^+)$. Therefore, $f\in \mathcal{O}_{X,x}^+$.

    Next, we show that $k(x)\cong \mathcal{O}_{X,x}\otimes_{\mathcal{O}_{X,x}^+}k(x)^+$. Indeed, by definition, $\mathcal{O}_{X,x}=\mathcal{O}_{X,x}^+[\frac{1}{\varpi}]$, so the conclusion follows immediately.
\end{proof}

The following lemma is well-known for experts.

\begin{lemma}\label{lemma: calO_x/varpi=k^+(x)/varpi}
    The canoical projection induces an isomorphism $\mathcal O_{X,x}^+/\varpi\cong k(x)^+/\varpi$.
\end{lemma}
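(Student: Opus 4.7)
The plan is to verify injectivity and surjectivity of the canonical map $\mathcal{O}_{X,x}^+/\varpi \to k(x)^+/\varpi$ separately.

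For \emph{injectivity}, I would invoke only the pullback half of Theorem \ref{theo: bi cartesion of of stalks}. Suppose $f \in \mathcal{O}_{X,x}^+$ has image lying in $\varpi\, k(x)^+$. Since $\varpi$ is a unit in the Tate stalk $\mathcal{O}_{X,x}$, the element $f/\varpi$ is well-defined in $\mathcal{O}_{X,x}$, and its image in $k(x)$ lies in $k(x)^+$ by hypothesis. The pullback identification $\mathcal{O}_{X,x}^+ = \mathcal{O}_{X,x} \times_{k(x)} k(x)^+$ from Theorem \ref{theo: bi cartesion of of stalks} then forces $f/\varpi \in \mathcal{O}_{X,x}^+$, whence $f \in \varpi\, \mathcal{O}_{X,x}^+$, as required.

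For \emph{surjectivity}, I would approximate and then realize the approximation as a section on a rational open. Given $a \in k(x)^+$, one uses the description of $k(x)$ as the completion of (a quotient of) $R$ with respect to $|\cdot|_x$ to locate an element $g \in R$ with $|g - a|_x \leq |\varpi|_x$; the non-Archimedean inequality together with $|a|_x \leq 1$ forces $|g|_x \leq 1$. The rational open $U := \{y \in X : |g|_y \leq 1\}$, realized for instance as $U(\tfrac{g,\varpi}{1})$ (the ideal $(g,\varpi)$ is open because $\varpi$ is already topologically nilpotent and invertible in $R$), is an open neighborhood of $x$, and $g$ defines a section in $\Gamma(U, \mathcal{O}_X^+)$ since $|g|_y \leq 1$ throughout $U$. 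The induced germ in $\mathcal{O}_{X,x}^+$ maps to $g \in k(x)^+$, which differs from $a$ by an element of $\varpi\, k(x)^+$ by construction, so $a$ lies in the image modulo $\varpi$.

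The main point requiring care is the density step in the surjectivity argument, which the author presumably has in mind when calling the lemma ``well-known for experts''. For analytic points $x$ this is the standard statement that $R$ maps densely onto $k(x)$ under the completion map; for more general points one must exploit rational neighborhoods a bit more delicately to produce the approximant, but the overall strategy---approximate $a$ by some $g \in R$ of integral valuation at $x$, then cut out a rational subset on which $g$ becomes an integral section---should remain intact. The injectivity half, by contrast, is essentially a one-line consequence of Theorem \ref{theo: bi cartesion of of stalks}.
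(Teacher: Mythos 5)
Your injectivity argument is correct and recovers the paper's mechanism: the paper directly cuts out the rational open $\{|f|\le|\varpi|\}$, whereas you route through the pullback half of Theorem~\ref{theo: bi cartesion of of stalks} (whose proof is itself the rational-open cutout), so the two are the same in substance; yours is arguably a slightly cleaner presentation.

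The surjectivity argument, however, has a real gap. You assert that ``$R$ maps densely onto $k(x)$ under the completion map,'' but this is false in general. Take $X=\Spa(K\langle T\rangle, K^{\circ}\langle T\rangle)$ over a non-Archimedean field $K$, and let $x$ be the Gauss point. Then $\mathrm{supp}(x)=0$, so $R=K\langle T\rangle$ embeds into $k(x)=\widehat{K(T)}$ (completion for the Gauss norm), and this embedding is \emph{not} dense: the element $a=\tfrac{1}{T-1}\in k(x)^+$ has $|a|_x=1$, but for any $g\in K\langle T\rangle$ the condition $|g-a|_x<1$ is equivalent to $|g(T-1)-1|_x<1$, i.e.\ $\bar g\cdot(T-1)=1$ in $k[T]$ where $k$ is the residue field of $K$ --- impossible, since $T-1$ is not a unit in $k[T]$. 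In particular there is no $g\in R$ with $|g-a|_x\le|\varpi|_x$, so your proposed approximant does not exist. The correct dense subring of $k(x)$ is the stalk $\mathcal O_{X,x}$ (this is exactly what the construction of the complete residue field gives, which is what the paper is gesturing at): in the example, $\tfrac{1}{T-1}$ is a section of $\mathcal O_X^+$ over the rational open $\{\,|T-1|\ge 1\,\}\ni x$. Your strategy is then easily repaired --- pick $g\in\mathcal O_{X,x}$ with $|g-a|_x\le|\varpi|_x$, note $|g|_x\le\max(|a|_x,|\varpi|_x)\le 1$, and conclude $g\in\mathcal O_{X,x}^+$ by the same pullback identification you used for injectivity --- but as written the appeal to density of $R$ is wrong, not merely ``a bit more delicate.''
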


\begin{proof}
    The surjectivity is a direct corollary of the construction of the complete residue fields. For the injectivity, let $f\in \ker(\mathcal O_{X,x}\to k^+(x)/\varpi)$. Choose an open subset $x\in U\subseteq X$ such that $f\in \Gamma (U,\mathcal O_{X}^+)$ and denote by $|-|_x$ the valuation o $x$. Then $|f|_x\leq |\varpi|_x$ by definition. Define $U'\subseteq U$ be the open neighborhood of $x$ defined by $|f|\leq |\varpi|$. Then $f\in \varpi \Gamma(U',\mathcal O_{X}^+)$ by the construction. Thus, $f\in \varpi\mathcal{O}_{X,x}^+$. 
\end{proof}

Fix a finite rank projective module $P$ of $R$. It is defined in \cite[Definition 1.1.11]{kedlaya2017} that there is a canonical topology on $P$ making $P$ complete.

\begin{lemma}
    Any $R^+$-submodule $M\subseteq P$ satisfying that $M[\frac{1}{\varpi}]=P$ is open.
\end{lemma}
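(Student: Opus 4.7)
The plan is to unwind the description of the canonical topology on $P$ via a presentation. Because $P$ is finite rank projective, pick a surjection $\pi\colon R^n\twoheadrightarrow P$; the canonical topology on $P$ is then the quotient topology, so $M\subseteq P$ is open iff $\pi^{-1}(M)\subseteq R^n$ is open, and since $\pi^{-1}(M)$ is an additive subgroup of $R^n$ it suffices to exhibit an open neighborhood of $0$ contained in it.

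To do this, let $e_i:=\pi(f_i)$ be the images of the standard basis vectors $f_1,\dots,f_n$ of $R^n$. The assumption $M[\tfrac{1}{\varpi}]=P$ gives, for each $i$, an integer $k_i\geq 0$ with $\varpi^{k_i}e_i\in M$; set $k:=\max_i k_i$. Then
\[\pi\bigl(\varpi^k (R^+)^n\bigr)=\sum_{i=1}^n R^+\varpi^k e_i\subseteq M,\]
so $\varpi^k (R^+)^n\subseteq\pi^{-1}(M)$. Because $R^+$ contains the ring of definition $R_0$ (which is open in $R$), $R^+$ is itself open in $R$; and because $\varpi$ is a unit in $R$, multiplication by $\varpi^k$ is a homeomorphism of $R^n$. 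Consequently $\varpi^k (R^+)^n$ is an open neighborhood of $0$ in $R^n$, proving the claim.

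Equivalently, one can phrase this intrinsically on $P$: the submodule $L:=\sum_i R^+\varpi^k e_i\subseteq M$ is a finitely generated $R^+$-lattice in $P$ (i.e., $L[\tfrac{1}{\varpi}]=P$), and such lattices form a fundamental system of open neighborhoods of $0$ in the canonical topology on $P$. I do not anticipate any serious obstacle; the heart of the matter is the simple observation that the hypothesis $M[\tfrac{1}{\varpi}]=P$ lets one swallow a scaled finitely generated integral lattice inside $M$, and the only thing one really needs to invoke from \cite{kedlaya2017} is the compatibility of the canonical topology with the quotient (equivalently lattice) description.
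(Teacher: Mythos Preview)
Your proof is correct and follows the same core idea as the paper's: scale the images of a generating set by a power of $\varpi$ so that they land in $M$, and then observe that the resulting $R^+$-lattice is open. The only difference is in the last step: the paper invokes the open mapping theorem to conclude that the image $f\bigl((R^+)^{\oplus n}\bigr)$ is open in $P$, whereas you work on the source side, using directly that the canonical topology is the quotient topology and that $\varpi^k(R^+)^n$ is an open subgroup of $R^n$. Your route is slightly more elementary in that it avoids the open mapping theorem, but the arguments are otherwise identical.
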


\begin{proof}
    Take a family of elements $\{x_1,x_2,\dots,x_n\}\subseteq P$ such that they generate $P$. By multiplying by some power of $\varpi$, they can be made to lie in $M$. This defines an $R$-linear surjection $f:R^n\to P$. By the definition of the canonical topology, $f$ is continuous. Therefore, by the open mapping theorem, $f\big((R^+)^{\oplus n}\big)\subseteq P$ is an open subset. This shows that $M$ contains an open subgroup (under addition), hence is itself open.
\end{proof}

\begin{lemma}\label{lemma: preimage of int module}
    Let $P^+$ be an open $R^+$-submodule of $P$, and denote by $P^+_x$ the image of $\mathcal{O}_{X,x}^+\otimes_{R^+}P^+$ in $\mathcal O_{X,x}\otimes_{R} P$, $\overline P^+_x$ the image of $k(x)^+\otimes_{R^+}P^+$ in $ k(x)\otimes_{R} P$. Let $\Pi_x$ natural projection from $\mathcal{O}_{X,x}\otimes_RP$ to $k(x)\otimes_{R}P$. Then $\Pi^{-1}(\overline{P}_x^+)=P_x^+$.
\end{lemma}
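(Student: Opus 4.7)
The plan is to lift the bi-cartesian square of Theorem \ref{theo: bi cartesion of of stalks} from the scalar setting to the setting with coefficients in $P$ by a direct tensor-product argument. Rewriting the bi-cartesian square as a short exact sequence of $R^+$-modules, I would work with
\begin{equation*}
    0 \to \mathcal{O}_{X,x}^+ \xrightarrow{(\iota,\rho)} \mathcal{O}_{X,x} \oplus k(x)^+ \xrightarrow{(b,c)\mapsto \Pi_x(b)-\sigma(c)} k(x) \to 0,
\end{equation*}
where $\iota:\mathcal{O}_{X,x}^+\hookrightarrow \mathcal{O}_{X,x}$ and $\sigma:k(x)^+\hookrightarrow k(x)$ are the inclusions and $\rho:\mathcal{O}_{X,x}^+\to k(x)^+$ is the natural map.

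Tensoring with $P^+$ over $R^+$ and invoking right-exactness of $-\otimes_{R^+}P^+$ produces the right-exact sequence
\begin{equation*}
    \mathcal{O}_{X,x}^+ \otimes_{R^+} P^+ \to \big(\mathcal{O}_{X,x}\otimes_{R^+}P^+\big) \oplus \big(k(x)^+\otimes_{R^+}P^+\big) \to k(x)\otimes_{R^+}P^+ \to 0,
\end{equation*}
in which the image of the first arrow equals the kernel of the second. Next I would identify the tensor products. Since $P^+$ is open in $P$, any finite $R$-generating set of $P$ may be scaled by a power of $\varpi$ to land in $P^+$, so $P^+[1/\varpi]=P$ and therefore $R\otimes_{R^+}P^+=P$. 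Consequently $\mathcal{O}_{X,x}\otimes_{R^+}P^+=\mathcal{O}_{X,x}\otimes_R P$ and $k(x)\otimes_{R^+}P^+=k(x)\otimes_R P$.

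Now the lemma follows. The inclusion $P_x^+\subseteq \Pi_x^{-1}(\overline P_x^+)$ is immediate from the definitions. For the converse, take $f\in \mathcal{O}_{X,x}\otimes_R P$ with $\Pi_x(f)\in \overline P_x^+$. By the definition of $\overline P_x^+$, choose $g\in k(x)^+\otimes_{R^+}P^+$ whose image in $k(x)\otimes_R P$ equals $\Pi_x(f)$. Then $(f,g)$ lies in the kernel of the second arrow of the tensored sequence, so by right-exactness it is the image of some $\beta\in \mathcal{O}_{X,x}^+\otimes_{R^+}P^+$. Projecting to the first coordinate exhibits $f$ as the image of $\beta$ under the canonical map $\mathcal{O}_{X,x}^+\otimes_{R^+}P^+\to \mathcal{O}_{X,x}\otimes_R P$, whose image is $P_x^+$ by definition; hence $f\in P_x^+$.

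The main conceptual hurdle is finding this approach rather than attempting a naive construction of a preimage of $f$. A direct attempt would pick a representative of $\Pi_x(f)$ in $k(x)^+\otimes_{R^+}P^+$, lift its coefficients to $\mathcal{O}_{X,x}^+$ via Lemma \ref{lemma: calO_x/varpi=k^+(x)/varpi}, and iterate to control the $\varpi$-adic error; this would require $\mathcal{O}_{X,x}^+$ to be $\varpi$-adically complete, which fails in general. The tensor-product approach sidesteps this difficulty entirely, and the only technical input it needs is the identification $\mathcal{O}_{X,x}\otimes_{R^+}P^+=\mathcal{O}_{X,x}\otimes_R P$, which is exactly where openness of $P^+$ is used.
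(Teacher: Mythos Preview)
Your proposal is correct and follows essentially the same approach as the paper: tensor the exact sequence encoding the bi-cartesian square of Theorem~\ref{theo: bi cartesion of of stalks} with $P^+$ over $R^+$ and use right-exactness. You are more explicit than the paper about the identification $\mathcal{O}_{X,x}\otimes_{R^+}P^+\cong\mathcal{O}_{X,x}\otimes_R P$ (via $P^+[1/\varpi]=P$) and about the element chase deducing $\Pi_x^{-1}(\overline P_x^+)\subseteq P_x^+$ from exactness, both of which the paper leaves to the reader.
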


\begin{proof}
    In fact, it suffices to show that the sequence
  \[P^+\otimes_{R^+}\mathcal{O}_{X,x}^+\to \big(\mathcal{O}_{X,x}\otimes_{R}P\big)\oplus \big(k(x)^+\otimes_{R^+}P^+\big)\xrightarrow{m-n} k(x)\otimes_{R}P\to 0\]
  is exact, where the middle arrow maps $(m,n)$ to the difference of their images in $k(x)\otimes_{R}P$. Recall from Theorem \ref{theo: bi cartesion of of stalks} that the diagram
 \[\begin{tikzcd}
\mathcal{O}_{X,x}^+ \arrow[r] \arrow[d] & k(x)^+ \arrow[d] \\
\mathcal{O}_{X,x} \arrow[r] & k(x)
\end{tikzcd}\]
  is a pull-back diagram of rings, so there is an exact sequence of $R^+$-modules
  \[0\to \mathcal{O}_{X,x}^+\to \mathcal{O}_{X,x}\oplus k(x)^+\to k(x)\xrightarrow{x-y} 0.\]
  Using the right-exactness of tensor products immediately gives the conclusion.
\end{proof}

\begin{lemma}\label{lemma: free over int stalk}
    Keep the notation of Lemma \ref{lemma: preimage of int module} and fix a $u\in P$. Let $U_u$ be the set of points $x\in X$ such that the projection of $u$ in $k(x)\otimes_{R}P$ lies in $\overline{P}^+_x$. Then $U_u$ is an open subset.
\end{lemma}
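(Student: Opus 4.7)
The plan is to translate the residue-field condition defining $U_u$ into an equation in the integral stalk via Lemma~\ref{lemma: preimage of int module}, and then spread out the witnessing expression to an open neighborhood of the given point using the colimit structure of the stalks. Concretely, fix $x \in U_u$. By Lemma~\ref{lemma: preimage of int module}, $\Pi_x^{-1}(\overline{P}^+_x) = P^+_x$, so the condition $\Pi_x(u) \in \overline{P}^+_x$ is equivalent to the statement that $1 \otimes u \in P^+_x$ inside $\mathcal{O}_{X,x} \otimes_R P$. Unwinding the definition of $P^+_x$ yields a finite expression
\[
1 \otimes u \;=\; \sum_{i=1}^n g_i \otimes v_i
\]
in $\mathcal{O}_{X,x} \otimes_R P$, with $g_i \in \mathcal{O}_{X,x}^+$ and $v_i \in P^+$.

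The next step is to propagate this identity to an open neighborhood of $x$. Since $\mathcal{O}_{X,x}^+ = \colim_{V \ni x} \Gamma(V, \mathcal{O}_X^+)$, each $g_i$ lifts to some $\tilde{g}_i \in \Gamma(V_1, \mathcal{O}_X^+)$ on a common open neighborhood $V_1$ of $x$. The element $1 \otimes u - \sum_i \tilde{g}_i \otimes v_i \in \Gamma(V_1, \mathcal{O}_X) \otimes_R P$ vanishes after passage to the stalk at $x$; since the tensor product commutes with the filtered colimit $\mathcal{O}_{X,x} = \colim_{V \ni x} \Gamma(V, \mathcal{O}_X)$, this element already vanishes in $\Gamma(V, \mathcal{O}_X) \otimes_R P$ for some open neighborhood $V \subseteq V_1$ of $x$. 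For any $y \in V$, specializing at the stalk at $y$ gives $1 \otimes u = \sum_i (\tilde{g}_i)_y \otimes v_i$ in $\mathcal{O}_{X,y} \otimes_R P$, and since $(\tilde{g}_i)_y \in \mathcal{O}_{X,y}^+$, we conclude $1 \otimes u \in P^+_y$. Applying Lemma~\ref{lemma: preimage of int module} once more gives $\Pi_y(u) \in \overline{P}^+_y$, so $V \subseteq U_u$, proving that $U_u$ is open.

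I do not anticipate a serious obstacle: the argument is a standard spreading-out, requiring only careful bookkeeping of the two layers $\mathcal{O}_X$ and $\mathcal{O}_X^+$. The one subtle point worth emphasizing is that the condition defining $U_u$ is phrased purely at the residue-field level, where coefficients of a representative sum cannot be lifted directly from $k(x)^+$; Lemma~\ref{lemma: preimage of int module} is precisely the bridge that reformulates this condition as a membership in $P^+_x \subseteq \mathcal{O}_{X,x} \otimes_R P$, where the filtered colimit structure permits both the integral lift of the coefficients and the descent of the witnessing equality to an open neighborhood.
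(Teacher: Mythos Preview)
Your proof is correct and follows exactly the same approach as the paper: reformulate the residue-field condition at $x$ as membership in $P^+_x$ via Lemma~\ref{lemma: preimage of int module}, then spread out using the filtered colimit description of the stalks. The paper compresses the entire second step into the phrase ``from the property of filtered colimits,'' whereas you have carefully written out the lifting of the coefficients $g_i$ and the descent of the tensor identity to a neighborhood; the content is identical.
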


\begin{proof}
    By Lemma \ref{lemma: preimage of int module}, for any point $x\in X$, the image of $u$ in $k(x)\otimes_{R}P$ lying in $\overline{P}_{x}^+$ is equivalent to the image of $u$ in $\mathcal{O}_{X,x}\otimes_{R}P$ lying in $P^+_{x}$. From the property of filtered colimits, we know that if $x\in U_u$, then a neighborhood of $x$ is contained in $U_u$.
\end{proof}

\begin{lemma}
    Let $x\in X$ and $M$ be a $\varpi$-torsion-free finitely generated $\mathcal O_{X,x}^+$-module such that $M[\frac{1}{\varpi}]$ is flat. Then $M$ is free.
\end{lemma}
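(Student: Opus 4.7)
The plan is to construct an explicit isomorphism $\phi: (\mathcal{O}_{X,x}^+)^n \to M$ by lifting a basis of the reduction $M/\varpi M$. The three key ingredients are: freeness of the generic fiber $M[1/\varpi]$, freeness of the base change $\widetilde{M} := M \otimes_{\mathcal{O}_{X,x}^+} k(x)^+$ over the valuation ring $k(x)^+$, and the identification $\mathcal{O}_{X,x}^+/\varpi \cong k(x)^+/\varpi$ from Lemma \ref{lemma: calO_x/varpi=k^+(x)/varpi}.

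First I would observe that $M[1/\varpi]$ is a finitely generated flat module over the local ring $\mathcal{O}_{X,x}$ and hence free of some rank $n$, using the classical fact that finitely generated flat modules over local rings are free. Next I would establish that $\widetilde{M}$ is $\varpi$-torsion-free. Tensoring the short exact sequence $0 \to M \xrightarrow{\varpi} M \to M/\varpi M \to 0$ with $k(x)^+$ over $\mathcal{O}_{X,x}^+$ exhibits the $\varpi$-torsion of $\widetilde{M}$ as a quotient of $\Tor_1^{\mathcal{O}_{X,x}^+}(M/\varpi M, k(x)^+)$. Because $\varpi$ acts regularly on both $\mathcal{O}_{X,x}^+$ and $k(x)^+$, the change-of-rings spectral sequence for the quotient $\mathcal{O}_{X,x}^+ \twoheadrightarrow \mathcal{O}_{X,x}^+/\varpi$ collapses to give
\[\Tor_1^{\mathcal{O}_{X,x}^+}(M/\varpi M, k(x)^+) \cong \Tor_1^{k(x)^+/\varpi}(M/\varpi M, k(x)^+/\varpi) = 0,\]
where the isomorphism uses that $M/\varpi M$ is naturally a module over $\mathcal{O}_{X,x}^+/\varpi \cong k(x)^+/\varpi$ via Lemma \ref{lemma: calO_x/varpi=k^+(x)/varpi}, and the vanishing is because $k(x)^+/\varpi$ is free over itself. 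Since in any $k(x)^+$-module the torsion elements coincide with the $\varpi$-power-torsion (every nonzero $r \in k(x)^+$ divides a power of $\varpi$, because $\varpi$ is topologically nilpotent and the valuation at $x$ is continuous), $\widetilde{M}$ is torsion-free. Being a finitely generated torsion-free module over the valuation ring $k(x)^+$, $\widetilde{M}$ is free, and its rank is $n$ as determined by $\widetilde{M}[1/\varpi] = M[1/\varpi] \otimes_{\mathcal{O}_{X,x}} k(x) = k(x)^n$.

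Passing to the reduction modulo $\varpi$ then yields $M/\varpi M \cong \widetilde{M}/\varpi \widetilde{M} \cong (k(x)^+/\varpi)^n \cong (\mathcal{O}_{X,x}^+/\varpi)^n$. Lifting a basis of $M/\varpi M$ to elements $f_1, \ldots, f_n \in M$, define $\phi: (\mathcal{O}_{X,x}^+)^n \to M$ by $e_i \mapsto f_i$. Surjectivity of $\phi$ follows from Nakayama's lemma: $\coker(\phi)$ is finitely generated and vanishes modulo $\varpi$, so it vanishes because $\varpi$ lies in the maximal ideal of the local ring $\mathcal{O}_{X,x}^+$. For injectivity, $\phi[1/\varpi]: \mathcal{O}_{X,x}^n \to M[1/\varpi] \cong \mathcal{O}_{X,x}^n$ is a surjection between free modules of the same finite rank over a commutative ring, hence an isomorphism; thus $\ker(\phi)$ is $\varpi$-torsion, and being a submodule of the $\varpi$-torsion-free module $(\mathcal{O}_{X,x}^+)^n$, it must vanish.

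The main obstacle is the $\varpi$-torsion-freeness of $\widetilde{M}$, which is not automatic from that of $M$ because base change can introduce torsion. The Tor-vanishing argument above sidesteps this by leveraging Lemma \ref{lemma: calO_x/varpi=k^+(x)/varpi} to replace the somewhat opaque ring $\mathcal{O}_{X,x}^+$ with the better-behaved valuation ring $k(x)^+$ after reducing modulo $\varpi$.
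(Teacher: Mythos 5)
Your proof is correct, and it takes a genuinely different route from the paper's. The paper reduces the claim to flatness of $M$ over $\mathcal O_{X,x}^+$: it passes to the $\varpi$-adic completion $\widehat M$, identifies (via Lemma \ref{lemma: calO_x/varpi=k^+(x)/varpi}) its reduction mod $\varpi$ with a finitely generated module over $k(x)^+/\varpi$, argues by approximation that $\widehat M$ is free over $k(x)^+$, and then invokes the slicing criterion for flatness together with the fact that finite flat modules over a local ring are free. You instead establish directly that the \emph{base change} $\widetilde M = M\otimes_{\mathcal O_{X,x}^+} k(x)^+$ is $\varpi$-torsion-free, via a change-of-rings isomorphism
\[
\Tor_1^{\mathcal O_{X,x}^+}(M/\varpi M,\,k(x)^+) \;\cong\; \Tor_1^{\mathcal O_{X,x}^+/\varpi}(M/\varpi M,\,k(x)^+/\varpi)=0,
\]
which is valid precisely because $\varpi$ is a nonzerodivisor on both $\mathcal O_{X,x}^+$ and $k(x)^+$ so that $k(x)^+\otimes^{\mathbf L}_{\mathcal O_{X,x}^+}(\mathcal O_{X,x}^+/\varpi)\simeq k(x)^+/\varpi$ sits in degree $0$. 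From this you get freeness of $\widetilde M$ over the valuation ring $k(x)^+$, match the rank with that of $M[1/\varpi]$, and then lift a basis of $M/\varpi M\cong\widetilde M/\varpi\widetilde M$ through $\mathcal O_{X,x}^+/\varpi\cong k(x)^+/\varpi$, concluding by Nakayama for surjectivity and by $\varpi$-torsion-freeness of $(\mathcal O_{X,x}^+)^{\oplus n}$ for injectivity. Both arguments hinge on the same two structural facts — the identification $\mathcal O_{X,x}^+/\varpi\cong k(x)^+/\varpi$ and the freeness of finitely generated torsion-free modules over the valuation ring $k(x)^+$ — but yours replaces the paper's completion-and-flatness machinery with an explicit Tor computation and basis lift. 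Your route has the advantage of avoiding both the somewhat informal ``by an approximation'' step and the hypotheses of the flatness slicing criterion; the paper's route has the advantage of producing flatness of $M$ itself as an intermediate statement, which is conceptually nice. One small point worth spelling out, which you use tacitly: that every nonzero $r\in k(x)^+$ divides a power of $\varpi$ follows simply from $k(x)=k(x)^+[\tfrac1\varpi]$ (so $r^{-1}\in\varpi^{-n}k(x)^+$ for some $n$), without any appeal to continuity or the value group.
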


\begin{proof}
    Recall from \cite[Proposition 1.6]{huber_general} that both $\mathcal O_{X,x}$ and $\mathcal O_{X,x}^+$ are $\varpi$-torsion free local rings. Hence, by \cite[\href{https://stacks.math.columbia.edu/tag/00NZ}{Tag 00NZ}]{stacks-project}, $M[\frac{1}{\varpi}]$ is free over $\mathcal O_{X,x}$ and it suffices to prove that $M$ is flat over $\mathcal O_{X,x}^+$. As $\varpi$ is a nonzero divisor of $\mathcal O_{X,x}^+$, the derived $\varpi$-completion of $M$ (denoted by $\widehat M$) is equal to the classical $\varpi$-adic completion. As $M$ is $\varpi$-torsion free, so is $\widehat{M}$. Hence, \[\widehat{M}/\varpi\cong \mathcal (O_{X,x}^+/\varpi)\otimes_{\mathcal{O}_{X,x}^+}M\cong \big(k^+(x)/\varpi\big)\otimes_{\mathcal O_{X,x}^+}M\]
    is finitely generated. Note that $k(x)^+$ is a valuation ring. By an approximation, $\widehat{M}$ is free over $k(x)^+$. Hence, $(O_{X,x}^+/\varpi)\otimes_{\mathcal{O}_{X,x}^+}M$ is free over $\mathcal O_{X,x}^+/\varpi$. The claim follows from the slicing criterion for flatness (cf. \cite[\href{https://stacks.math.columbia.edu/tag/0H7N}{Tag 0H7N}]{stacks-project}).
\end{proof}

\begin{lemma}\label{free}
    Let $P$ be a finite rank projective $R$-module and $P^+\subseteq P$ be an open $R^+$-submodule. Then for any finite subset $S$ of $P^+$, There exist an analytic cover $Y=\spa(A,A^+)\to X$ and a free $A^+$ module $F\subseteq (A^+\otimes_{R^+}P^+)^{\varpi-\text{tf}}$ containing the images of all elements in $S$.
\end{lemma}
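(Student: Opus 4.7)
The plan is to produce a single finitely generated $R^+$-submodule $M \subseteq P^+$ that contains $S$ and satisfies $M \cdot R = P$, show that its image in every stalk is free of rank $N := \mathrm{rank}\, P$, and then spread that freeness to a rational cover of $X$ by quasi-compactness.

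For the construction, I would pick generators $p_1, \dots, p_n$ of $P$ over $R$ and $k \ge 0$ with $\varpi^k p_i \in P^+$ (possible by openness of $P^+$ in the canonical Tate topology on $P$), and set
\[
M := \sum_{s \in S} R^+ \cdot s \;+\; \sum_{i=1}^n R^+ \cdot \varpi^k p_i \;\subseteq\; P^+.
\]
Then $M$ is finitely generated over $R^+$, contained in $P^+$, contains $S$, and satisfies $M \cdot R = P$.

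For the stalkwise claim, let $M_x$ be the image of $\mathcal O_{X,x}^+ \otimes_{R^+} M$ inside $P_x := \mathcal O_{X,x} \otimes_R P$. Since $\mathcal O_{X,x}$ is local and $P$ is finitely generated projective, $P_x$ is free of rank $N$, hence $M_x$ is $\varpi$-torsion free (as a submodule of $P_x$) and finitely generated over $\mathcal O_{X,x}^+$, while $M_x[\tfrac{1}{\varpi}] = P_x$ is flat. The unlabeled lemma immediately preceding the present statement then yields that $M_x$ is a free $\mathcal O_{X,x}^+$-module, necessarily of rank $N$.

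For the spreading, pick a basis $f_1^x, \dots, f_N^x$ of $M_x$; these lift to $\tilde f_i^x \in M_U$ on some rational neighborhood $U = \Spa(A, A^+)$ of $x$ (where $M_U$ denotes the image of $A^+ \otimes_{R^+} M$ in $A \otimes_R P$), defining a map $\phi \colon (A^+)^N \to M_U$ with $\phi(e_i) = \tilde f_i^x$. After further shrinking $U$ (trivializing $P$ locally and passing to a rational open of the form $\{|\det \phi[\tfrac{1}{\varpi}]| \ge |\varpi|^L\}$), the map $\phi[\tfrac{1}{\varpi}] \colon A^N \to A \otimes_R P$ becomes an isomorphism of finite projective $A$-modules. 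Then $\ker\phi$ is $\varpi$-torsion inside $(A^+)^N$ and hence zero, so $\phi$ is injective; surjectivity on a further shrinkage follows from the finite generation of $M_U$ together with surjectivity at the stalk. Consequently $M_U \cong (A^+)^N$ is free on a rational neighborhood of $x$. By quasi-compactness of $X$, finitely many such neighborhoods cover $X$; their disjoint union $Y = \Spa(A, A^+)$ is affinoid, and $F := M_Y$ is the desired free $A^+$-module, contained in $(A^+ \otimes_{R^+} P^+)^{\varpi-\text{tf}}$ since $M \subseteq P^+$. The main obstacle is the spreading step: injectivity of $\phi$ cannot be checked directly from stalkwise injectivity and must be routed through $\varpi$-torsion-freeness once $\phi[\tfrac{1}{\varpi}]$ is known to be an isomorphism on a neighborhood.
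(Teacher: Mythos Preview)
Your proposal is correct and follows essentially the same strategy as the paper: reduce to a finitely generated $R^+$-submodule $M\subseteq P^+$ with $M[\tfrac{1}{\varpi}]=P$, use the unlabeled lemma to get freeness of the stalk $M_x$, and spread this to a rational neighborhood via filtered colimits and quasi-compactness. The only cosmetic difference is that the paper enlarges $S$ so that it generates $P$, then picks the basis of $M_x$ as a \emph{subset} $S'\subseteq S$ via Nakayama, so that the comparison map $\rho\colon (R^+)^{|S'|}\to M$ is defined globally over $R^+$; this lets them deduce injectivity from ``surjection between free modules of equal rank is an isomorphism'' rather than from your determinant-locus argument, but the content is the same.
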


\begin{proof}
    By adding finitely many elements, we may assume that $S$ (via $R$-linear combinations) generates $P$, so it suffices to prove for $P^+=\sum_{s\in S}R^+s$. For any $x\in X$, define $P^+_x$ as in Lemma \ref{lemma: preimage of int module}. Then by Lemma \ref{lemma: free over int stalk}, $P_x^+$ is a free module. Assume $\mathfrak{m}_x\subset \mathcal{O}_{X,x}^+$ is the maximal ideal, take a subset $S'$ of $S$ such that it forms a basis in the $\mathcal{O}_{X,x}^+/\mathfrak{m}_x$-linear space $P_{x}^+/\mathfrak{m}_x$. By Nakayama's lemma, we know that $S'$ is also a basis of $P^+_x$. Define $F_{S'}=\bigoplus_{s\in S'}R^+$ and define an $R^+$-linear map $\rho: \bigoplus_{s\in S'}R^+\to P^+$ sending the basis corresponding to $s\in S'$ to $s\in P^+$. By construction, $\rho$ becomes an isomorphism after base change to $\mathcal{O}_{X,x}^+$, and by the property of filtered colimits, there exists a rational open neighborhood $U=\mathrm{Spa}(R_1,R_1^+)$ of $x$ such that $\rho$ becomes a surjection after base change to $R_1^+$. Since $P$ is a finite-rank projective module, we may shrink $U$ so that $P_1:=R_1\otimes_RP$ is a free module. Comparing the ranks after base change to $\mathcal{O}_{X,x}^+$, we know that the rank of $P_1$ is $|S'|$, which is the same as the rank of $F_{S'}$. Hence, $\rho[\frac{1}{\varpi}]$ becomes an isomorphism after base change to $R_1$. In summary, $\rho$ becomes an isomorphism after base change to $R_1^+$, i.e., $P^+\otimes_{R^+}R^+_1$ is finite free.Performing the above construction for all $x\in X$ then yields the desired result.
\end{proof}

\section{Almost Mathematics and Almost Vector Bundles}

In this section, we review the basic theory of almost mathematics and define almost vector bundles. 

\subsection{Foundations of almost mathematics}

We briefly recall some basic almost mathematics. The main reference is \cite{Gabber2002AlmostRT}.

\begin{assum}[Basic setting of almost mathematics]\label{assum: basic setting of almost mathematics}
    During this subsection, let \( A \) be a ring and \( I \) be an ideal such that \( I \) is flat as an \( A \)-module and $I^2=I$. 
\end{assum}

\begin{lemma}
    Keep the notation in Assumption \ref{assum: basic setting of almost mathematics}. Then, 
    the multiplication map
    \[I\otimes_AI\to I\]
    is an isomorphism.
\end{lemma}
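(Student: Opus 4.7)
The plan is to prove injectivity using the flatness hypothesis and surjectivity using the idempotence hypothesis $I^2 = I$, and each of these is quite short.

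For surjectivity, I would simply observe that the image of the multiplication map $\mu \colon I \otimes_A I \to I$ is, by definition, the $A$-submodule generated by products $ab$ with $a, b \in I$, i.e.\ the ideal $I^2$. By assumption $I^2 = I$, so $\mu$ is surjective.

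For injectivity, I would apply the flatness of $I$ to the short exact sequence
\[
0 \to I \to A \to A/I \to 0.
\]
Tensoring over $A$ with the flat module $I$ yields the short exact sequence
\[
0 \to I \otimes_A I \to I \otimes_A A \to I \otimes_A (A/I) \to 0,
\]
and the first map here is precisely the composition of $\mu \colon I \otimes_A I \to I$ with the canonical isomorphism $I \cong I \otimes_A A$. In particular, $\mu$ is injective, and combined with surjectivity it is an isomorphism.

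There is no real obstacle: the hypotheses $I^2 = I$ and flatness of $I$ are tailored exactly to give surjectivity and injectivity respectively, and the only small point to verify is the unsurprising identification of the map $I \otimes_A I \to I \otimes_A A$ (induced by the inclusion $I \hookrightarrow A$) with the multiplication map under $I \otimes_A A \cong I$, which is immediate from unwinding the definitions.
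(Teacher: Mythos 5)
Your proof is correct and follows essentially the same route as the paper: tensoring the short exact sequence $0 \to I \to A \to A/I \to 0$ with the flat module $I$ to get injectivity, and using $I^2 = I$ to get surjectivity. The paper collapses both points into one line by noting that the resulting sequence reads $0 \to I \otimes_A I \to I \to I/I^2 = 0$, but the argument is the same.
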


\begin{proof}
    Consider the exact sequence
    \[0\to I\to A\to A/I\to 0.\]
    As $I$ is flat, tensoring the above sequence with $I$ preserves exactness. This means
    \[0\to I\otimes_AI\to I\to I/I^2=0\]
    is exact, and the lemma follows.
\end{proof}

Let \( \mathbf{Mod}_A \) be the category of \( A \)-modules.

\begin{definition}
    Let \( M \in \mathbf{Mod}_A \). The module \( M \) is called \textit{almost zero} if
    \[ I \otimes_A M = 0. \]
\end{definition}

\begin{lemma}[{\cite[Remark 2.1.4]{Gabber2002AlmostRT}}]\label{lemma: IM=0_eq_to_I_tensor_M=0}
    Let $M$ be an $A$-module. Then, $M$ is almost zero if and only if $IM=0$.
\end{lemma}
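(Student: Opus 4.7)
The plan is to handle the two implications separately; the forward direction is essentially tautological, and the backward direction uses the isomorphism $I \otimes_A I \xrightarrow{\sim} I$ established in the preceding lemma.

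For the forward direction, suppose $I \otimes_A M = 0$. The multiplication map $\mu: I \otimes_A M \to M$, $a \otimes m \mapsto am$, has image precisely $IM$. Since the domain is zero, $IM = 0$.

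For the backward direction, suppose $IM = 0$. Tensoring the isomorphism $I \otimes_A I \xrightarrow{\sim} I$ (from the previous lemma) with $M$ over $A$ gives an isomorphism
\[
\varphi: I \otimes_A I \otimes_A M \xrightarrow{\sim} I \otimes_A M, \qquad a \otimes b \otimes m \mapsto ab \otimes m.
\]
However, by the defining relations of the tensor product over $A$, we have the identity $ab \otimes m = a \otimes bm$ in $I \otimes_A M$ for every pure tensor. Since $bm \in IM = 0$, each such expression vanishes. So $\varphi$ is simultaneously an isomorphism and the zero map on pure tensors, whence $I \otimes_A M = 0$.

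The only subtle point, and the one I would double-check, is the equality of the two natural maps $I \otimes_A I \otimes_A M \to I \otimes_A M$ given by $a \otimes b \otimes m \mapsto ab \otimes m$ and $a \otimes b \otimes m \mapsto a \otimes bm$; these coincide precisely because of the tensor relations, and it is this coincidence that lets $IM = 0$ collapse the entire module. Everything else is routine. No additional structure on $M$ (such as flatness) is needed, and the flatness hypothesis on $I$ is used only indirectly via the prior lemma.
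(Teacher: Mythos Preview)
Your proof is correct, but it takes a different route from the paper's. The paper observes that if $IM = 0$ then $M$ is naturally an $A/I$-module, and then computes
\[
I \otimes_A M \;=\; I \otimes_A (A/I \otimes_{A/I} M) \;=\; (I \otimes_A A/I) \otimes_{A/I} M \;=\; (I/I^2) \otimes_{A/I} M \;=\; 0,
\]
using only the idempotence $I = I^2$. Your argument instead invokes the isomorphism $I \otimes_A I \xrightarrow{\sim} I$ from the preceding lemma and shows that, after tensoring with $M$, the resulting isomorphism factors through zero via the tensor relation $ab \otimes m = a \otimes bm$. Both are clean; the trade-off is that the paper's version needs only $I^2 = I$, whereas yours imports the flatness of $I$ through the preceding lemma --- so your closing remark that flatness is ``used only indirectly'' is accurate, but the paper shows it is not needed at all for this particular statement.
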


\begin{proof}
    The `only if' part is obvious. We prove that if $IM=0$, then $I\otimes_AM=0$. Indeed, this time, $M$ is an $A/I$-module. The claim then follows from the following calculation
    \[I\otimes_AM=I\otimes_{A}(A/I\otimes_{A/I}M)=(I\otimes_AA/I)\otimes_{A/I}M=I/I^2\otimes_{A/I}M=0.\]
    The last equality holds by the assumption that $I=I^2$.
\end{proof}

Let \( \mathbf{Mod}_A[I] \) be the full subcategory of \( \mathbf{Mod}_A \) generated by all almost zero modules. Clearly, \( \mathbf{Mod}_A[I] \) is a Serre subcategory \cite[\href{https://stacks.math.columbia.edu/tag/02MN}{Tag 02MN}]{stacks-project}.

\begin{definition}[{\cite[2.2.2]{Gabber2002AlmostRT}}]
    Define the category of \((A,I)\)-almost modules as the quotient category
    \[ \mathbf{Mod}^a_{(A,I)} := \mathbf{Mod}_A / \mathbf{Mod}_A[I]. \]
    Objects in $\mathbf{Mod}_A^a$ are call \emph{almost $A$-modules} or \emph{$A^a$-modules}.
\end{definition}

In practical use, the ideal \( I \) is generally pre-specified, so we simply denote \( \mathbf{Mod}_{(A,I)}^a \) as \( \mathbf{Mod}_A^a \).
The usual tensor product descends to a tensor product on $\mathbf{Mod}_A^a$, which makes $\mathbf{Mod}_A^a$ a monoidal category. Thus, the almost flatness of an almost module is well defined. We will use $-\otimes_{A^a}-$ to denote the tensor product of $A^a$-modules.
Furthermore, an \textit{almost algebra} is defined as a ring object in the almost module category \( \mathbf{Mod}_A^a \).

It is worth mentioning that injective objects behave well in the almost category  \cite[Corollary 2.2.17 \& Corollary 2.2.22]{Gabber2002AlmostRT}. Hence, the right derived functors are well-defined. The left derived functors are more complex. However, it is easy to check that if $\mathcal{F}$ is a right exact functor on $\mathrm {Mod}_A$ such that the left derived functor $L\mathcal{F}$ is zero on $\mathrm {Mod}_A[I]$, then the almostification of $L\mathcal F$ is the left derived functor of the almostification of $\mathcal F$. In particular, the derived tensor product is well defined on $\mathbf{Mod}_A^a$. We will use $\mathrm{Tor}_k^{A^a}$ to denote the $k$-th homology groups of derived tensor products.

\begin{exam}[Almost mathematics of integral perfectoid rings]\label{EX: IPR}
    Let $R$ be an integral perfectoid ring in the sense of \cite[Definition 17.5.1]{Scholze_2020} and let $R^{\circ\circ}$ be the ideal of topologically nilpotent elements. Then $A=R$ and $I=R^{\circ\circ}$ satisfy Assumption \ref{assum: basic setting of almost mathematics}. Indeed, put $R^\flat$ the tilt of $R$ and $\varpi\in R^\flat$ a pseudo-uniformizer. Let $\pi_n\in R$ be the untilt of $\varpi^{1/p^n}$. Then $R^{\circ\circ}=\bigcup_{n\geq 0}\pi_nR$. This shows that $R^{\circ\circ}$ is a colimit of free $R$-module and hence, is flat.
\end{exam}

\begin{exam}[Almost math of non-archimedean fields]
    Let $K$ be a non-archimedean field with norm $|\cdot|:K^\times\to \mathbb{R}_{>0}$. Let $\mathcal O$ be the elements in $K$ with norm non-larger than $1$ and $\frak m$ be the elements of norm smaller than $1$. Assume that $|K^\times|\subseteq \mathbb R_{>0}$ is dense, then $A=\mathcal O$ and $I=\frak m$ satisfy Assumption \ref{assum: basic setting of almost mathematics}.
 \end{exam}

The natural projection from the category $\mathbf{Mod}_A$ to the category $\mathbf{Mod}^a_A$ is called \emph{almostification}. For an $A$-module $M$, denote by $M^a$ the almostification of $M$. By definition, the almostification is exact. One of the most important properties of the almostification functor is that it has left and right adjoints.

\begin{definition}\label{def: * and !}
	Suppose $N\in \mathbf{Mod}_A^a$. Define its module of almost elements as \[N_*=\mathrm{Hom}(A^a,N).\] Via the action of $A$ on $A^a$, regard $N_*$ as an $A$-module. Define
\begin{equation*} N_!:=I\otimes_A  N_*. \end{equation*}
\end{definition}

\begin{theorem}\label{almost}
	The almostification functor $M\mapsto M^a$ has a left adjoint $(N\mapsto N_!)$ and a right adjoint $(N\mapsto N_*)$. Moreover, the left and right adjoints of the almostification functor are both left exact and fully faithful, and the functor $(N\mapsto N_!)$ is also right exact.
\end{theorem}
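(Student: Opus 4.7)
The plan is to first establish a concrete formula for morphism spaces in $\mathbf{Mod}_A^a$ in terms of honest $A$-linear maps, and then derive the two adjunctions and their exactness and fullness properties formally. Specifically, I would prove
\[ \Hom_{A^a}(M^a, N^a) \;\cong\; \Hom_A(I \otimes_A M, N) \;\cong\; \Hom_A(M, \Hom_A(I, N)), \]
where the second isomorphism is tensor-Hom adjunction. For the first, one uses: (i) the multiplication map $I \otimes_A M \to M$ is an isomorphism in $\mathbf{Mod}_A^a$, since its cokernel $M/IM$ is annihilated by $I$ (hence almost zero by Lemma \ref{lemma: IM=0_eq_to_I_tensor_M=0}) and its kernel is a quotient of $\Tor_1^A(A/I, M)$, an $A/I$-module; (ii) any fraction $I \otimes_A M \leftarrow K \to N$ representing a morphism in the Serre quotient must have $K = I \otimes_A M$, because $K \supseteq I \cdot (I \otimes_A M) = I \otimes_A M$ (using the isomorphism $I \otimes_A I \cong I$); (iii) an honest $f: I \otimes_A M \to N$ vanishing in the almost category has image killed by $I$, which again forces $f = 0$ by the same identity.

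The right adjoint is then immediate: for $N = N'^a \in \mathbf{Mod}_A^a$, we have $N_* = \Hom_{A^a}(A^a, N) \cong \Hom_A(I, N')$ by the key formula with $M = A$, and thus
\[ \Hom_{A^a}(M^a, N) \cong \Hom_A(M, \Hom_A(I, N')) = \Hom_A(M, N_*). \]

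For the remaining assertions, I would next analyze the unit $\eta_M: M \to (M^a)_* = \Hom_A(I, M)$, $m \mapsto (i \mapsto im)$. Its kernel $\{m : Im = 0\}$ is killed by $I$, and for the cokernel the identity $(i_0 \phi)(i) = \phi(i_0 i) = i \cdot \phi(i_0) = \eta(\phi(i_0))(i)$ shows $I \cdot \Hom_A(I, M) \subseteq \im(\eta)$; both are almost zero, so $\eta_M^a$ is an isomorphism. The triangle identity $\epsilon_{M^a} \circ \eta_M^a = \mathrm{id}_{M^a}$ then forces the counit $\epsilon: (N_*)^a \to N$ to be iso, proving $(-)_*$ is fully faithful. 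The left adjoint now follows from the chain
\[ \Hom_A(N_!, M) = \Hom_A(I \otimes_A N_*, M) = \Hom_A(N_*, (M^a)_*) = \Hom_{A^a}((N_*)^a, M^a) \cong \Hom_{A^a}(N, M^a), \]
using tensor-Hom adjunction, the already-established $(-)^a \dashv (-)_*$, and the counit iso just proved. The unit of $(-)_! \dashv (-)^a$ is an iso since $(N_!)^a = (I \otimes_A N_*)^a \cong (N_*)^a \cong N$ by (i) above and the previous sentence, so $(-)_!$ is also fully faithful. Finally, $(-)_*$ is left exact as a right adjoint, $(-)_!$ is right exact as a left adjoint, and $(-)_!$ is additionally left exact because $N_! = I \otimes_A N_*$ composes the left exact $(-)_*$ with the exact functor $I \otimes_A -$ (flatness of $I$ by Assumption \ref{assum: basic setting of almost mathematics}).

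The main obstacle is the key formula: carefully verifying the identification of morphism spaces in the Serre quotient $\mathbf{Mod}_A/\mathbf{Mod}_A[I]$ with $A$-linear maps requires understanding the definition of morphisms via fractions and exploiting the idempotence $I = I^2$ together with the flatness of $I$ to collapse all potential nontrivial zig-zags. Once this technical verification is complete, everything else is a formal consequence of adjunction theory.
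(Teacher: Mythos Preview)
The paper does not actually prove this theorem; it simply cites \cite[Proposition 2.2.13 \& Proposition 2.2.21]{Gabber2002AlmostRT}. Your proposal to spell out the argument is therefore more than the paper itself offers, and your overall strategy is the standard one.

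The one place to tighten is step (ii) of the key Hom formula. As written, the containment ``$K \supseteq I\cdot(I\otimes_A M)$'' presumes $K$ is a submodule of $I\otimes_A M$; in the roof model the left leg $K \to I\otimes_A M$ is only an almost-isomorphism, and in the equivalent colimit model $\colim_{M',N'}\Hom_A(M',N/N')$ you do get $M' = I\otimes_A M$ by your argument, but then the target is $N/N'$ rather than $N$ and you have not explained why one may take $N'=0$. You correctly flag this step as the main obstacle, and the fix is precisely the maneuver the paper itself uses in the proof of the next result (Lemma \ref{embed}): apply the exact functor $I\otimes_A(-)$ to the roof. Since $I\otimes_A(-)$ annihilates almost-zero modules and $I\otimes_A I\cong I$, the left leg becomes an honest isomorphism; composing $I\otimes_A K\to I\otimes_A N$ with the multiplication $I\otimes_A N\to N$ then produces a genuine map $I\otimes_A M\to N$ representing the same almost-morphism. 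With this in hand, the remainder of your proposal---the two adjunctions via tensor--Hom, the unit/counit analysis giving full faithfulness, and the exactness claims (including left exactness of $(-)_!$ via flatness of $I$)---is correct and cleanly argued.
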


\begin{proof}
	See \cite[Proposition 2.2.13 \& Proposition 2.2.21]{Gabber2002AlmostRT}.
\end{proof}

Moreover, the essential image of the functor $(N\mapsto N_!)$ can be described by the following lemma.

\begin{lemma}\label{embed}
	The essential image of the functor $(N\mapsto N_!)$ is the modules in $\mathbf{Mod}_A$ for which the product homomorphism $I\otimes_AM\to M$ is an isomorphism.
\end{lemma}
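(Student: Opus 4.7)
The plan is to prove both inclusions between the essential image of $(-)_!$ and the class $\mathcal{C} := \{M \in \mathbf{Mod}_A \mid I \otimes_A M \to M \text{ is an iso}\}$ separately, in each case leveraging the previous lemma asserting that $I \otimes_A I \to I$ is an isomorphism.

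For the inclusion ``essential image $\subseteq \mathcal{C}$'', I would start with $M = N_!$ for some almost module $N$, so that $M = I \otimes_A N_*$ by definition. Using associativity of the tensor product, the multiplication map $I \otimes_A M \to M$ becomes the map $I \otimes_A (I \otimes_A N_*) \cong (I \otimes_A I) \otimes_A N_* \to I \otimes_A N_*$ induced by $I \otimes_A I \to I$. Since the latter is an isomorphism by the earlier lemma, so is the multiplication map on $M$.

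For the reverse inclusion, I would take $M \in \mathcal{C}$ and show $M \cong (M^a)_!$. Consider the unit $\eta_M : M \to (M^a)_*$ of the adjunction $(-)^a \dashv (-)_*$. Since $(-)_*$ is fully faithful by Theorem \ref{almost}, the counit of this adjunction is an isomorphism, and by the triangle identities the almostification $(\eta_M)^a : M^a \to ((M^a)_*)^a$ is an isomorphism as well. Consequently both $\ker \eta_M$ and $\coker \eta_M$ are almost zero, hence annihilated by $I$ by Lemma \ref{lemma: IM=0_eq_to_I_tensor_M=0}. Applying the flat functor $I \otimes_A -$ to the four-term exact sequence associated with $\eta_M$ kills both ends, so the induced map $I \otimes_A M \to I \otimes_A (M^a)_* = (M^a)_!$ is an isomorphism. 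Combining with $I \otimes_A M \cong M$ (the hypothesis $M \in \mathcal{C}$) yields $M \cong (M^a)_!$, placing $M$ in the essential image.

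I do not expect any substantive obstacle; the only subtle bookkeeping is to check that the natural associativity isomorphism in the first step genuinely identifies with the multiplication map on $N_!$, and to invoke fully faithfulness of $(-)_*$ correctly to conclude that $\eta_M$ is an almost isomorphism. Both are formal once one unwinds the definitions.
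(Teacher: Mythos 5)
Your proposal is correct. The first inclusion (essential image $\subseteq \mathcal{C}$) is the same short computation the paper gives, via $I\otimes_A I\cong I$ and associativity. For the harder reverse inclusion, however, you take a genuinely different route. The paper proceeds concretely: it applies $\mathrm{Hom}_A(-,M)$ to $0\to I\to A\to A/I\to 0$, tensors with the flat ideal $I$ to kill the $\mathrm{Hom}(A/I,M)$ and $\mathrm{Ext}^1(A/I,M)$ terms and conclude $M\cong I\otimes_A\mathrm{Hom}_A(I,M)$, and then must still identify $\mathrm{Hom}_A(I,M)$ with $\mathrm{Hom}(I^a,M^a)=(M^a)_*$ by an explicit roof-calculus argument in the Serre localization. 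You instead argue purely with the adjunction $(-)^a\dashv(-)_*$: full faithfulness of $(-)_*$ makes the counit an isomorphism, the triangle identity makes $(\eta_M)^a$ an isomorphism, so $\ker\eta_M$ and $\coker\eta_M$ are almost zero; tensoring with flat $I$ then turns $\eta_M$ into an isomorphism $I\otimes_A M\xrightarrow{\sim} I\otimes_A(M^a)_*=(M^a)_!$, and the hypothesis $I\otimes_AM\cong M$ finishes. Your version buys a cleaner argument that avoids any unwinding of the localized category (no roofs), at the mild cost of invoking more categorical machinery (units, counits, triangle identities) that the paper chooses to make explicit by hand. Both are sound; yours is arguably the slicker one given that Theorem \ref{almost} already records the full faithfulness of $(-)_*$.
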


\begin{proof}
	Since the product $I\otimes_AI\to I$ is an isomorphism, for any $N\in \mathbf{Mod}_A^a$, \[I\otimes_AN_!= I\otimes_AI\otimes_AN_*=I\otimes_{A}N_*=N_!.\]

	It remains to show that for any $M\in\mathbf{Mod}_A$ satisfying $I\otimes_AM\cong M$, we have $M\cong M_{!}^a$. In fact, applying the functor $\mathrm{Hom}_A(-,M)$ to the exact sequence
	\begin{equation}\label{eq: proof mod realization of almost mod}0\to I\to A\to A/I\to 0\end{equation}
	yields the exact sequence
	\[0\to \mathrm{Hom}_A(A/I,M)\to M=\mathrm{Hom}_A(A,M)\to \mathrm{Hom}_A(I,M)\to \mathrm{Ext}^1_A(A/I,M).\]
	Here, the second and fifth terms are objects in the category $\mathbf{Mod}_{A}[I]$, so by Lemma \ref{lemma: IM=0_eq_to_I_tensor_M=0}, their tensor product with $I$ vanishes. Since $I$ is flat, applying the exact functor $-\otimes_A I$ to the exact sequence (\ref{eq: proof mod realization of almost mod}) yields
	\[M\cong I\otimes_A\mathrm{Hom}_A(I,M).\]

    Finally, we show that the canonical projection from $\mathrm{Hom}_A(I, M)$ to $\mathrm{Hom}(I^a, M^a)$ is an isomorphism. Verifying injectivity is straightforward; below we verify surjectivity. This can be verified using the concrete construction of the quotient category. In fact, let $\mathrm{Isom}_A^a$ be the class of morphisms $f: M_1 \to M_2$ in $\mathbf{Mod}_A$ such that $\ker f, \coker f \in \mathbf{Mod}_A[I]$. Then, by the definition of the Serre quotient category, $\mathbf{Mod}_A^a$ equals the localized category $\mathbf{Mod}_A[(\mathrm{Isom}_A^a)^{-1}]$. Therefore, any almost module homomorphism $I^a \to M^a$ can be represented by a roof
\begin{equation}\label{eq: roof} I \xleftarrow{p} T \xrightarrow{q} M,\end{equation}
where $p \in \mathrm{Isom}_A^a$. Applying the functor $I \otimes_A -$ to the roof (\ref{eq: roof}) yields the roof
\begin{equation}\label{eq: roof otimes I} I \otimes_A I \xleftarrow{p \otimes_A \mathrm{id}_I} T \otimes_A I \xrightarrow{q \otimes_A \mathrm{id}_I} M \otimes_A I.\end{equation}
By Lemma \ref{lemma: IM=0_eq_to_I_tensor_M=0}, $p \otimes_A \mathrm{id}_I$ is an isomorphism.
Now, noting that the two product homomorphisms $I \otimes_A I \to I$ and $I \otimes_A M \to M$ are both isomorphisms, it follows that the roof (\ref{eq: roof}) is equivalent to the morphism \[(q \otimes_A \mathrm{id}_I) \circ (p \otimes_A \mathrm{id}_I)^{-1}: I = I \otimes_A I \to M \otimes_A I = M.\] In particular, the canonical projection from $\mathrm{Hom}_A(I, M)$ to $\mathrm{Hom}(I^a, M^a)$ is surjective.
\end{proof}

Finally, as the main object of study in this article, we introduce two analogues of freeness in almost mathematics.
\begin{definition}
	Let \( M \in \mathbf{Mod}_A^a \), then
	\begin{enumerate}[(\roman*)]
		\item \( M \) is called \textit{almost finite free of rank $r$} if for every finitely generated ideal \( I_0 \subseteq I \), there exists a homomorphism of $A^a$-modules \( (A^a)^r \subseteq M \) whose kernel and cokernel are annihilated by $I_0$.
		\item \( M \) is called \textit{finite free} if \( M \) is isomorphic to a finite direct sum of copies of \( A^a \).
	\end{enumerate}
\end{definition}

Obviously, finite free modules are almost finite free. Moreover, almost finite free modules are almost flat.

\begin{exam}[Almost free module but not free]\label{Ex: AF not F}

Let $K$ be a non-archimedean field with (multiplicative) norm $|\cdot|: K \to \mathbb{R}_{\geq 0}$. Put $\mathcal{O}_K$ the valuation ring and $\mathfrak m=\{x\in K: |x|<1\}$. Assume that the value group $\Gamma = |K^\times| \subset \mathbb{R}_{>0}$ is dense. Then, the pair $(\mathcal O_{K},\frak m)$ satisfies Assumption \ref{assum: basic setting of almost mathematics}. Suppose there exists an $r \in \mathbb{R}_{> 0}\backslash \Gamma$. Define
\[
I_r = \{x\in K: |x|<r\}.
\]
Then $I_r$ is an almost finite free but not finite free $\mathcal O_K^a$-module.

\end{exam}


\begin{exam}[Freeness does not implies projectiveness in $\mathbf{Mod}_A^a$]\label{ex: free not almost projective}
    In almost category, freeness does not imply projectiveness in the category $\mathbf {Mod}_A^a$.

For instance, working over the base ring $A=\mathcal{O}_{\mathbb{C}_{p}}$ with idempotent maximal ideal $\mathfrak{m}=\mathfrak{m}^{2}$. ${A}^{a}$ is free but not projective. Actually, the group $\operatorname{Ext}_{A^{a}}^{1}({A}^{a},A^{a})=\operatorname{Ext}_{A}^{2}(A/\mathfrak{m},\frak m)$, and is nonzero since $\mathbb{C}_{p}$ is not spherically complete. See \cite[Remark 4.2.5]{Bhatt2017Perfectoid} for a detailed explanation. 

 For some special $A$, there are even no non-trivial projective objects in the category $\mathbf{Mod}_A^a$ (\cite[Example 2.4.5]{Gabber2002AlmostRT}).

\end{exam}




\subsection{Almost vector bundles, locally almost free sheaves and locally free sheaves}

In this subsection, we fix a ring $A$ and an ideal $I \subseteq A$ satisfying Assumption \ref{assum: basic setting of almost mathematics}. All subsequent almost mathematics is considered relative to the pair $(A, I)$. 

The goal of this subsection is to define almost vector bundles.
It must be emphasized that in almost mathematics, since almost free modules and free modules are not always equivalent, a distinction also exists between almost vector bundles and locally free sheaves.

\begin{definition}\label{defi: almost sheaf of A mod}
    Let ${\mathcal{X}}$ be a site. An almost $A$-module sheaf (or an almost $A$-module presheaf) on ${\mathcal{X}}$ is defined as a sheaf (or presheaf) on ${\mathcal{X}}$ which takes values in the category $\mathbf{Mod}_{A}^a$.
\end{definition}

\begin{lemma}\label{Lem: Sh}
    Using the notation from Definition \ref{defi: almost sheaf of A mod}:
    \begin{enumerate}
        \item The natural embedding from the category of almost $A$-module sheaves to the category of almost $A$-module presheaves has an exact left adjoint, called sheafification \footnote{We ignore some set theoretic issues, as the sheafification may not exist if the category is too large. This can be fixed by shrinking the cardinality of $\mathcal X$}.
        \item The category of almost $A$-module sheaves is an abelian category satisfying (AB5), hence has enough injective objects.
    \end{enumerate}
\end{lemma}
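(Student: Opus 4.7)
The plan is to transfer the classical theory of sheaves of $A$-modules along the adjunctions $(-)_!\dashv(-)^a\dashv(-)_*$ of Theorem \ref{almost}. Applied levelwise to sections, these extend to a fully faithful embedding $(-)_*\colon \mathrm{PSh}_{A^a}(\mathcal{X}) \hookrightarrow \mathrm{PSh}_A(\mathcal{X})$ of presheaf categories with levelwise almostification as its left adjoint. The functor $(-)_*$ preserves all limits (as a right adjoint), while $(-)^a$ preserves both limits and colimits (having adjoints on both sides). Since the sheaf condition is a limit condition, this yields the key correspondence: an almost presheaf $\mathcal{F}$ is an almost sheaf if and only if the $A$-module presheaf $\mathcal{F}_*$ is a sheaf, and dually the levelwise almostification of any $A$-module sheaf is automatically an almost sheaf.

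For part (1), I would define
\[\mathrm{sh}(\mathcal{F}) := \bigl(\mathrm{sh}_A(\mathcal{F}_*)\bigr)^a,\]
where $\mathrm{sh}_A$ denotes ordinary $A$-module sheafification. The universal property follows from a short chain of adjunctions: for any almost sheaf $\mathcal{G}$,
\[\Hom\bigl(\mathrm{sh}(\mathcal{F}),\mathcal{G}\bigr) = \Hom\bigl(\mathrm{sh}_A(\mathcal{F}_*),\mathcal{G}_*\bigr) = \Hom(\mathcal{F}_*,\mathcal{G}_*) = \Hom(\mathcal{F},\mathcal{G}),\]
applying in order the levelwise adjunction $(-)^a\dashv(-)_*$, the universal property of $\mathrm{sh}_A$ (applicable because $\mathcal{G}_*$ is a sheaf by the correspondence above), and full faithfulness of $(-)_*$. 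For exactness, note that $(-)_*$ is left exact, $\mathrm{sh}_A$ is exact, and $(-)^a$ is exact; composing gives left exactness of $\mathrm{sh}$, and right exactness is automatic since $\mathrm{sh}$ is a left adjoint. Hence $\mathrm{sh}$ is exact.

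For part (2), the abelian structure on the category of almost sheaves is the usual one: kernels of sheaf maps are computed levelwise (kernels are limits, and $(-)_*$ both preserves and reflects them, since it is fully faithful and limit-preserving), while cokernels are the sheafifications of presheaf cokernels. For (AB5), filtered colimits in almost sheaves are computed as $\mathrm{sh}$ applied to the levelwise filtered colimit in almost presheaves, and both operations are exact: $\mathrm{sh}$ by part (1), and levelwise filtered colimits because $\mathbf{Mod}_A^a$ is itself AB5, being a Grothendieck quotient of $\mathbf{Mod}_A$ by the localizing Serre subcategory $\mathbf{Mod}_A[I]$, which is closed under arbitrary direct sums. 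Enough injectives then follows by Grothendieck's theorem applied to a generator of the form $\bigoplus_{U}\mathrm{sh}(j_{U,!}A^a)$ indexed by a small cofinal family of objects of $\mathcal{X}$, constructed exactly as in the classical topos-theoretic case. I do not expect any step to be a serious obstacle; the only subtle point is that the exactness argument for $\mathrm{sh}$ requires only left exactness of $(-)_*$ (which is all a right adjoint affords), with the set-theoretic issues regarding the existence of the generator handled by the essential smallness caveat already flagged in the footnote.
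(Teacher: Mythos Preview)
Your proposal is correct and follows essentially the same strategy as the paper: transfer the classical theory of $A$-module sheaves through the almostification adjunctions. The one notable variation is that you embed $\mathbf{Mod}_A^a$ into $\mathbf{Mod}_A$ via the right adjoint $(-)_*$, whereas the paper uses the left adjoint $(-)_!$ (invoking Lemma~\ref{embed}) and then writes the sheafification simply as ``the almostification of the sheafification functor.'' Since $(-)_!$ is exact while $(-)_*$ is only left exact, the paper's choice makes the exactness of $\mathrm{sh}$ immediate, whereas you correctly supply the extra half-line observing that right exactness comes for free from being a left adjoint. Both formulas $(\mathrm{sh}_A(\mathcal{F}_!))^a$ and $(\mathrm{sh}_A(\mathcal{F}_*))^a$ define the same functor, so the difference is purely cosmetic; your version is in fact more detailed than the paper's, which dispatches part~(2) in a single sentence by appealing to (AB5) and completeness of $\mathbf{Mod}_A^a$.
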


\begin{proof}
    Via the functor $(-)_!$, identify $\mathbf{Mod}_A^a$ as a full additive abelian subcategory of $\mathbf{Mod}_A$. Consequently, almost $A$-module sheaves (or presheaves) can be viewed as a full additive subcategory of $A$-module sheaves (or presheaves). It is straightforward to verify that the almostification of the sheafification functor provides a left adjoint to the embedding of almost sheaves into almost presheaves. This proves the first statement. Since the category $\mathbf{Mod}_A^a$ is an abelian category satisfying (AB5) and is complete, the second statement can be proved using the same arguments as for sheaves of abelian groups.
\end{proof}

\begin{definition}\label{defi: almost ringed site setting}
    An almost ringed site is a pair $({\mathcal{X}}, {\mathcal{O}}_{{\mathcal{X}}})$, where ${\mathcal{X}}$ is a site and ${\mathcal{O}}_{{\mathcal{X}}}$ is a sheaf of $A^a$-almost algebras. For an almost ringed site $({\mathcal{X}}, {\mathcal{O}}_{{\mathcal{X}}})$, one can define ${\mathcal{O}}_{{\mathcal{X}}}$-almost module sheaves (or almost module presheaves) analogously to module sheaves (or presheaves). Denote by $\mathbf{Mod}_{{\mathcal{X}}}$ the category of ${\mathcal{O}}_{{\mathcal{X}}}$-module sheaves.
\end{definition}

After identifying $\mathbf{Mod}_A^a$ as a full additive abelian subcategory of $\mathbf{Mod}_A$ via $(-)_!$, it is not difficult to see that every $\mathbf{Mod}_A^a$-presheaf on any site admits a sheafification, and the sheafification functor is exact. Moreover, the category $\mathbf{Mod}_{{\mathcal{X}}}$ satisfies (AB5), hence possesses enough injective objects. Similar to ringed sites, one can define morphisms of almost ringed sites, and further define the pullback and pushforward of almost module sheaves.

For each almost $A$-module $M$ and an almost ringed site $({\mathcal{X}}, {\mathcal{O}}_{{\mathcal{X}}})$, define the almost sheaf $\widetilde{M}_{({\mathcal{X}},{\mathcal{O}}_{{\mathcal{X}}})}$ as the sheafification of the presheaf defined by $U \mapsto {\mathcal{O}}_{{\mathcal{X}}}(U) \otimes_{A^a} M$ (tensor product as almost $A$-modules).

\begin{definition}\label{def}
    Let $({\mathcal{X}}, {\mathcal{O}}_{{\mathcal{X}}})$ be an almost ringed site, and let ${\mathcal{M}}$ be an almost module sheaf on $({\mathcal{X}}, {\mathcal{O}}_{{\mathcal{X}}})$. We say ${\mathcal{M}}$ is:
    \begin{enumerate}
        \item An almost vector bundle of rank $r$, if for every object $U \in {\mathcal{X}}$ and every finitely generated ideal $I_0 \subseteq I$, there exists a covering $\{U_\lambda \to U : \lambda \in \Lambda\}$ such that on each open $U_{\lambda}$, there exists a morphism $({\mathcal{O}}_{{\mathcal{X}}}|_{U_\lambda})^{\oplus r} \to {\mathcal{M}}|_{U_\lambda}$ whose kernel and cokernel are annihilated by $I_0$.
        \item A locally free sheaf, if for every object $U \in {\mathcal{X}}$, there exists a covering $\{U_\lambda \to U : \lambda \in \Lambda\}$ such that on each $U_{\lambda}$, ${\mathcal{M}}|_{U_{\lambda}}$ is isomorphic to a finite direct sum of copies of ${\mathcal{O}}_{{\mathcal{X}}}$.
        \item A locally almost free sheaf, if for every object $U \in {\mathcal{X}}$, there exists a covering $\{U_\lambda \to U : \lambda \in \Lambda\}$ such that for each $U_{\lambda}$, there exists an almost free $A^a$-module $M$ satisfying \[{\mathcal{M}}|_{U_{\lambda}} \cong \widetilde{M}_{({\mathcal{X}}|_{U_{\lambda}}, {\mathcal{O}}_{{\mathcal{X}}}|_{U_{\lambda}})}.\]
    \end{enumerate}
\end{definition}

An important fact is that these properties are preserved under pullbacks.

\begin{lemma}\label{compatible}
	Suppose that $f:(\mathcal{X},\mathcal{O}_\mathcal{X})\to (\mathcal{Y},\mathcal{O}_\mathcal{Y})$ is a continuous morphism of almost ringed sites\footnote{Similar to ringed sites, a continuous morphism of almost ringed sites $f:(\mathcal{X},\mathcal{O}_\mathcal{X})\to (\mathcal{Y},\mathcal{O}_\mathcal{Y})$ consists of a morphism of sites $\mathcal{X}\to \mathcal{Y}$ in the sense of \cite[\href{https://stacks.math.columbia.edu/tag/00X1}{Tag 00X1}]{stacks-project} (denoted by $f^{-1}$ the morphism of categories $\mathcal{Y}\to \mathcal{X}$) and a morphism of almost ring sheaves $\mathcal{O}_\mathcal{Y}\to f_*\mathcal{O}_\mathcal{X}$.} satisfying that
    \begin{itemize}
        \item For any $U\in \mathcal X$, there exists a covering $\{U_\lambda\to U:\lambda\in\Lambda\}$ and a class of objects $\{V_\lambda:\lambda\in \Lambda\}$ such that $\mathrm{Mor}_\mathcal{X}(U_\lambda,f^{-1}V_\lambda)\neq \emptyset$.
    \end{itemize}
	Then for any almost vector bundle (resp. locally free sheaf, locally almost free sheaf) $\mathcal{M}$ on $(\mathcal{Y},\mathcal{O}_\mathcal{Y})$, $f^*\mathcal{M}$ is an almost vector bundle (resp. locally free sheaf, locally almost free sheaf) on $(\mathcal{X},\mathcal{O}_\mathcal{X})$.
\end{lemma}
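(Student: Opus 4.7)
The plan is to handle the three notions by a common template. Given $U\in\mathcal{X}$ (together with a finitely generated $I_0\subseteq I$ in the almost vector bundle case), I first apply the covering hypothesis on $f$ to cover $U$ by $U_\lambda$'s equipped with morphisms $U_\lambda\to f^{-1}V_\lambda$ for suitable $V_\lambda\in\mathcal{Y}$; then I use the local structure of $\mathcal{M}$ on each $V_\lambda$ to refine it by a cover $\{V_{\lambda,\mu}\to V_\lambda\}$ carrying a trivialisation of the required type. Since $f^{-1}$, as part of a morphism of sites, sends covers to covers, and covers in a site are stable under base change, pulling $\{f^{-1}V_{\lambda,\mu}\to f^{-1}V_\lambda\}$ back along $U_\lambda\to f^{-1}V_\lambda$ produces a cover of each $U_\lambda$, and composition yields a cover of $U$. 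The real content of the proof is then to verify that, on each of these refined objects, the pullback of the local trivialisation is still a trivialisation of the same type for $f^*\mathcal{M}$.

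For locally free and locally almost free sheaves the necessary compatibilities are formal. First, $f^*$ preserves finite direct sums of $\mathcal{O}_\mathcal{Y}$ because it is a left adjoint, so $f^*(\mathcal{O}_\mathcal{Y}^{\oplus r})\cong\mathcal{O}_\mathcal{X}^{\oplus r}$. Second, for any $M\in\mathbf{Mod}_A^a$ one has $f^*\widetilde{M}_{(\mathcal{Y},\mathcal{O}_\mathcal{Y})}\cong\widetilde{M}_{(\mathcal{X},\mathcal{O}_\mathcal{X})}$ because $f^*$ commutes with sheafification and with the almost tensor product. Transporting the local isomorphism $\mathcal{M}|_{V_{\lambda,\mu}}\cong\mathcal{O}_\mathcal{Y}^{\oplus r}|_{V_{\lambda,\mu}}$ (resp.\ $\mathcal{M}|_{V_{\lambda,\mu}}\cong\widetilde{M}_{(\mathcal{Y}|_{V_{\lambda,\mu}},\mathcal{O}_\mathcal{Y}|_{V_{\lambda,\mu}})}$) across $f^*$ immediately handles these two cases.

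The almost vector bundle case is more delicate because $f^*$ is only right exact: the cokernel of a pulled-back morphism is controlled automatically, but the kernel can grow. To absorb this growth, I would first use $I=I^2$ to enlarge $I_0$: writing each generator $a$ of $I_0$ as a finite sum $\sum b_ic_i$ with $b_i,c_i\in I$, the ideal $I_0'\subseteq I$ generated by all the $b_i,c_i$ is finitely generated and satisfies $I_0\subseteq (I_0')^2$. I then choose local morphisms $\phi_{\lambda,\mu}:(\mathcal{O}_\mathcal{Y}|_{V_{\lambda,\mu}})^{\oplus r}\to\mathcal{M}|_{V_{\lambda,\mu}}$ with kernel and cokernel annihilated by $I_0'$. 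Factoring $\phi_{\lambda,\mu}$ through its image gives
\[ 0\to\ker\phi_{\lambda,\mu}\to\mathcal{O}_\mathcal{Y}^{\oplus r}|_{V_{\lambda,\mu}}\to\im\phi_{\lambda,\mu}\to 0,\qquad 0\to\im\phi_{\lambda,\mu}\to\mathcal{M}|_{V_{\lambda,\mu}}\to\coker\phi_{\lambda,\mu}\to 0, \]
and the long exact sequence of the derived pullback $\mathcal{L}f^*$ exhibits $\ker(f^*\phi_{\lambda,\mu})$ as an extension of a subsheaf of $\mathcal{L}_1f^*(\coker\phi_{\lambda,\mu})$ by the image of $f^*(\ker\phi_{\lambda,\mu})$. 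Both pieces are annihilated by $I_0'$, via the standard fact that $a\cdot\Tor_i(-,N)=0$ whenever $aN=0$; hence $\ker(f^*\phi_{\lambda,\mu})$ is killed by $(I_0')^2$ and therefore by $I_0$. The cokernel of $f^*\phi_{\lambda,\mu}$ equals $f^*(\coker\phi_{\lambda,\mu})$ and is killed by $I_0'\supseteq I_0$.

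The main obstacle is precisely this kernel control, which rests on the existence of the derived pullback $\mathcal{L}f^*$ and its long exact sequence in the category of almost module sheaves. This is legitimate because the almost tensor product admits a well-defined left derived functor and sheafification is exact, as already recorded in the preceding subsection; the idempotency $I=I^2$, which allows the passage from $I_0$ to $I_0'$, is the feature of the almost setting that turns an a priori $(I_0')^2$-annihilation into the required $I_0$-annihilation. Everything else reduces to routine bookkeeping about transferring covers from $\mathcal{Y}$ to $\mathcal{X}$ via the covering hypothesis on $f$.
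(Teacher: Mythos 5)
Your overall template — use the covering hypothesis on $f$ to transport the problem to $\mathcal{Y}$, refine by a cover of each $V_\lambda$ carrying a local trivialisation, then pull the refined covers and trivialisations back via $f^{-1}$ and base change — is exactly the paper's. The interesting divergence is in how the two arguments treat the kernel of the pulled-back morphism in the almost vector bundle case. The paper directly asserts that $\nu_{\lambda,\gamma}$, obtained by pulling back a morphism $\mu_{\lambda,\gamma}$ whose kernel and cokernel are killed by $I_0$, again has kernel and cokernel killed by $I_0$, and offers no justification. You correctly point out that this is not automatic: factoring through the image and applying the right-exact $f^*$, the kernel of the pullback sits in a short exact sequence whose outer terms (the image of $f^*(\ker\mu)$ and a piece of $L_1f^*(\coker\mu)$) are each killed by $I_0$, which a priori only kills the extension by $I_0^2$. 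Your fix — use $I=I^2$ to produce a finitely generated $I_0'\subseteq I$ with $I_0\subseteq(I_0')^2$, apply the almost-vector-bundle property on $\mathcal{Y}$ for $I_0'$, and deduce $I_0$-annihilation of the pulled-back kernel — is sound and is precisely the step the paper elides; it buys you an argument that is correct at face value rather than correct only after the reader silently re-quantifies over $I_0$. One trivial slip: the $\ker\beta$ term is a \emph{quotient} of $L_1f^*(\coker\phi_{\lambda,\mu})$ (it is the image of the connecting map), not a subsheaf; the conclusion that it is $I_0'$-torsion is unaffected.
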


\begin{proof}
We will only prove the case of almost vector bundles; the proofs for the other two cases are entirely analogous. Suppose $\mathcal{M}$ is an almost vector bundle on $\mathcal{Y}$. For $U\in\mathcal{X}$ and a finitely generated ideal $I_{0}\subseteq I$, we need to show that there exists a cover $\{U_{\lambda}\to U:\lambda\in\Lambda\}$ of $U$ such that on each $U_{\lambda}$, there exists a positive integer $r$ and a morphism
\[
\nu_{\lambda}:\mathcal{O}_{\mathcal{X}}|_{U_{\lambda}}^{\oplus r}\to f^{*}\mathcal{M}|_{U_{\lambda}}
\]
such that the kernel and cokernel of $\nu_{\lambda}$ are annihilated by $I_{0}$. 

By assumption, we can choose a cover $\{U_{\lambda}\to U:\lambda\in\Lambda\}$, objects $\{V_{\lambda}\in\mathcal{Y}:\lambda\in\Lambda\}$, and morphisms $\{\sigma_{\lambda}:X_{\lambda}\to f^{-1}V_{\lambda}\}$ in $\mathcal{X}$. For each $V_{\lambda}$, we can choose a cover $\{V_{\lambda,\gamma}\to V_{\lambda}:\gamma\in\Gamma_{\lambda}\}$ such that on each $V_{\lambda,\gamma}$, there exists a morphism $\mu_{\lambda,\gamma}:\mathcal{O}_{\mathcal{Y}}|_{V_{\lambda,\gamma}}^{\oplus r}\to\mathcal{M}|_{V_{\lambda,\gamma}}$ such that both $\ker\mu_{\lambda,\gamma}$ and $\operatorname{coker}\mu_{\lambda,\gamma}$ are annihilated by $I_{0}$.

For any $\lambda\in\Lambda$, $\gamma\in\Gamma_{\lambda}$, define $U_{\lambda,\gamma}=f^{-1}V_{\lambda,\gamma}\times_{f^{-1}V_{\lambda,\sigma}}U_{\lambda}$. Then, by definition, $\{U_{\lambda,\gamma}\to U:\lambda\in\Lambda,\gamma\in\Gamma_{\lambda}\}$ is a cover. Pulling back $\mu_{\lambda,\gamma}$ along the natural morphism $U_{\lambda,\gamma}\to f^{-1}V_{\lambda,\gamma}$, it follows that on $U_{\lambda,\gamma}$ there exists a morphism $\nu_{\lambda,\gamma}:\mathcal{O}_{\mathcal{X}}|_{U_{\lambda,\gamma}}^{\oplus r}\to f^{*}\mathcal{M}|_{U_{\lambda,\gamma}}$ whose kernel and cokernel are both annihilated by $I_{0}$.
\end{proof}

We end this subsection with an example illustrating that an $\mathcal{O}_X^+$-locally free sheaf in the \'{e}tale topology does not necessarily descend to an $\mathcal{O}_X^+$-locally free sheaf in the analytic topology.

\begin{exam}[\'Etale vector bundle which is not analytic vector bundle]\label{ex: et bundle not descent}
Consider a finite ramified Galois extension $L/K$ of perfectoid fields. Then, $L^{\circ}$ is almost free but need not to be free over $K^{\circ}$. For example, take  $K = \widehat{\mathbb{Q}_p(p^{1/p^\infty})}$, and $L = K(\sqrt{p})$ with $p \neq 2$ (cf. \cite[Example 4.3.2]{Bhatt2017Perfectoid}). Let $X=\spa(K,K^\circ)$ and $Y=\spa(L,L^{\circ})$. Then $L^\circ$, considered as a $K^{\circ,a}$-module is an almost vector bundle on $X$ which is not a locally free sheaf. However, the \'etale sheaf $\CV$ on $X$ induced by $L^\circ$ is locally free. Indeed, since $L/K$ is Galois, the almost tensor product $L^{\circ,a}\otimes_{K^{\circ,a}}L^{\circ,a}$ is isomorphic to $\bigoplus_{g\in \mathrm{Gal}(L/K)} L^{\circ,a}$, meaning that $\CV|_{Y}$ is free.
\end{exam}

\section{$V$-descent of Almost Vector Bundles}

In this section, we always fix an affinoid perfectoid space $X=\Spa(A,A^+)$, and deal with the almost mathematical relationship with respect to $A^{\circ\circ}$. Fix a perfectoid pseudo-uniformizer $\pi\in A^+$, such that $\pi^{1/p^n}\in A^+$  for $\forall n\in\mathbb{N}$. 
Then we have $A^{\circ\circ}=\bigcup_{n\ge 0}\pi^{1/p^n}A^+$ by Example \ref{EX: IPR}.

We denote by $X_{\mathrm{an}}$ the analytic site of $X$, by $X_v$ the $v$-site of $X$ (cf. \cite[Definition 17.1.1]{Scholze_2020}).  It turns out that  both  \( X_{\mathrm{an}}^{+,a}:=(X_{\mathrm{an}}, \mathcal{O}^{+,a}_X)  \) and \( X_v^{+,a}:= (X_v, \mathcal{O}^{+,a}_X) \) are almost ringed sites in sense of Definition \ref{defi: almost ringed site setting}, by \cite[Theorem 17.1.3]{Scholze_2020} and \cite[Theorem 6.3]{scholze2012perfectoid}. 

To make our proof more concise, in this section, we use the functor $(-)_{!}$ to view all almost $A^{+,a}$-module sheaves as $A^{+}$-module sheaves. According to Lemma \ref{embed}, for a site $\mathcal{X}$, an almost $A^{+,a}$-module sheaf on $\mathcal{X}$ is equivalent to an $A^{+}$-module sheaf whose all sections satisfy \[A^{\circ\circ}\otimes_{A^+}M\cong M.\] In particular, for an almost vector bundle $\mathcal{V}$ on $X_{\mathrm{an}}^{+,a}$ (resp. $X_{v}^{+,a}$), it can be naturally viewed as a subsheaf of $\mathcal{O}_X^+$-modules of the analytic (resp. $v$-) vector bundle $\mathcal{V}[\frac{1}{\pi}]$ on $X$.

Fix an integer $N\geq 1$. We begin our proof by showing that under the $v$-topology, almost vector bundles are locally almost free.

\begin{theorem}\label{Thm: vb is svb}
Any almost vector bundle of rank $N$  over $X_v^{+,a}$ is locally almost free in sense of Definition \ref{def}.
\end{theorem}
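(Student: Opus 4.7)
My strategy is to combine the $v$-descent of rational vector bundles (Theorem \ref{theo: rational v descent}) with Lemma \ref{free} to produce, for each $n$, an analytic refinement of a suitable $v$-cover where the almost-vector-bundle trivialization has a free image; taking a cofiltered limit then yields a single $v$-cover on which $\mathcal{V}$ is almost finite free.

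First, applying the almost-vector-bundle condition with $\alpha=\pi$ gives $v$-locally a map $f:(\mathcal{O}^{+,a})^{\oplus N}\to \mathcal{V}$ with kernel and cokernel killed by $\pi$; inverting $\pi$ converts $f$ into an honest isomorphism $\mathcal{O}_X^{\oplus N}\cong \mathcal{V}[\tfrac{1}{\pi}]$ on the cover. Hence $\mathcal{V}[\tfrac{1}{\pi}]$ is a $v$-vector bundle of rank $N$, and by Theorem \ref{theo: rational v descent} there is a finite rank projective $A$-module $P$ with $\mathcal{V}[\tfrac{1}{\pi}]\cong \widetilde{P}$. Via the exact functor $(-)_!$, regard $\mathcal{V}$ as a subsheaf of honest $A^+$-modules of $\widetilde{P}$.

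For each $n\geq 1$, the almost-vector-bundle condition produces an affinoid perfectoid $v$-cover $Y_n=\Spa(B_n,B_n^+)\to X$ together with $N$ sections $s^{(n)}_1,\dots,s^{(n)}_N\in \mathcal{V}_!(Y_n)\subseteq B_n\otimes_A P$ whose induced map $(B_n^{+,a})^{\oplus N}\to \mathcal{V}|_{Y_n}$ has kernel and cokernel killed by $\pi^{1/p^n}$. Choose any open $B_n^+$-submodule $P^+_n\subseteq B_n\otimes_A P$ containing the $s^{(n)}_i$ (e.g., the $B_n^+$-span of the $s^{(n)}_i$ together with an integral set of $B_n$-generators), and apply Lemma \ref{free} to $(B_n,B_n^+)$, $P^+_n$, and the finite set $\{s^{(n)}_i\}$: this produces an analytic refinement $Y'_n\to Y_n$ on which the $s^{(n)}_i$ span a free rank-$N$ submodule $F_n\subseteq \mathcal{V}_!(Y'_n)$. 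Now let $Y=\Spa(B,B^+)$ be the cofiltered limit of the tower $\{Y'_n\}$ in affinoid perfectoid spaces over $X$; then $Y\to X$ remains a $v$-cover. On $Y$ the base changes of the $F_n$ give free rank-$N$ $B^+$-submodules of $\mathcal{V}_!(Y)$ whose cokernels in $\mathcal{V}(Y)$ are killed by $\pi^{1/p^n}$, so the almost module $M:=\mathcal{V}(Y)$ is almost finite free of rank $N$ over $B^{+,a}$. Finally, identify $\mathcal{V}|_Y\cong \widetilde{M}$ as sheaves on $Y_v^{+,a}$ using the almost-acyclicity of $\mathcal{O}_X^{+,a}$ on affinoid perfectoids in the $v$-topology (cf.\ \cite[Theorem 17.1.3]{Scholze_2020}).

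The main obstacle I expect is the final sheaf identification $\mathcal{V}|_Y\cong \widetilde{M}$: almost freeness of the module of global sections does not automatically produce a sheaf isomorphism, and one needs that an almost vector bundle over an affinoid perfectoid base is reconstructible from its module of global sections via the canonical tensor-product presheaf — a form of Tate's acyclicity in the almost-mathematical setting. A secondary technical point is legitimizing the cofiltered limit: one must check that the limit of the $Y'_n$ exists as an affinoid perfectoid $v$-cover of $X$, and that the free models $F_n$ and the sections $s^{(n)}_i$ transfer faithfully to the limit so that the $F_n$ assemble into free rank-$N$ submodules of $\mathcal{V}_!(Y)$ with cokernel killed by $\pi^{1/p^n}$ as required.
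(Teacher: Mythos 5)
Your overall strategy — $v$-locally trivialize $\mathcal{V}[\frac{1}{\pi}]$ via Theorem~\ref{theo: rational v descent}, invoke Lemma~\ref{free} to produce free integral models at each precision $\pi^{1/p^n}$, and pass to a pro-$v$-cover — is exactly the strategy of the paper's proof (Lemma~\ref{Lem: construct} plus the concluding paragraph of Theorem~\ref{Thm: vb is svb}). However, there is a concrete gap in the passage to the limit.

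You construct, for each $n\geq 1$, an \emph{independent} $v$-cover $Y_n\to X$ witnessing the $\pi^{1/p^n}$-almost-trivialization, and then refine each to $Y'_n$ by Lemma~\ref{free}. You then write ``let $Y$ be the cofiltered limit of the tower $\{Y'_n\}$,'' but the $Y'_n$ do \emph{not} form a tower: there are no given maps $Y'_{n+1}\to Y'_n$, since $Y_{n+1}$ was produced by Definition~\ref{def} over $X$, not over $Y'_n$. Without compatible transition maps, ``the cofiltered limit of the tower'' is not defined, and the base change of $F_n$ to a common $Y$ has not been constructed. Two standard repairs exist, and the paper takes the first: (a)~construct the covers \emph{inductively}, at stage $n$ applying the almost-vector-bundle condition to $\mathcal{V}|_{X_{n-1}}$ (which is still an almost vector bundle by Lemma~\ref{compatible}) to get $Y_n\to X_{n-1}$ before applying Lemma~\ref{free}, so that one genuinely obtains a tower $X=X_0\leftarrow X_1\leftarrow\cdots$; or (b)~replace $Y$ by the fiber product over $X$ of all the $Y'_n$ (i.e., the cofiltered limit of the finite fiber products $Y'_1\times_X\cdots\times_X Y'_n$), which is an affinoid perfectoid $v$-cover of $X$ receiving a projection to each $Y'_n$. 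With either fix, your observation that the base change of $F_n$ remains injective (because the cokernel, and hence its $\Tor_1$, is $\pi^{1/p^n}$-torsion while the free target is $\pi$-torsion-free) does give injections with cokernel killed by $\pi^{1/p^n}$ for every $n$, so $\Gamma(Y,\mathcal{V})$ is almost finite free by definition. Note that the paper additionally imposes the compatibility $\iota_n$ in Lemma~\ref{Lem: construct}, ensuring the images of the $f_n$ are nested on $X_\infty$; this is not strictly needed for almost finite freeness as defined, but it makes the final step — identifying $\mathcal{V}|_{X_\infty}$ with the sheaf $\widetilde{\Gamma(X_\infty,\mathcal V)}$ — cleaner, since one exhibits $\Gamma(X_\infty,\mathcal V)$ as an almost-filtered union of free rank-$N$ submodules rather than merely dominated by them.

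The final sheaf-reconstruction step you flag (that $\mathcal{V}|_Y\cong\widetilde{M}$) is treated at the same level of detail in the paper's proof, which asserts that $B^+\otimes_{A_\infty^+}\Gamma(X_\infty,\mathcal{V})\to\Gamma(Y,\mathcal{V})$ is an almost isomorphism for all affinoid perfectoid $Y/X_\infty$; this does invoke almost acyclicity of $\mathcal{O}^{+,a}$ in the $v$-topology as you suggest, combined with the existence of injections with cokernel killed by arbitrarily small powers of $\pi$. So that part of your proposal matches the paper's.
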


\begin{proof}
Let  $\CV$ be an almost vector bundle of rank $N$ over $X_v^{+,a}$. We need the following lemma.
\begin{lemma}\label{Lem: construct}
    There exists a tower of affinoid perfectoid spaces $X=X_0 \xleftarrow{p_1} X_1 \xleftarrow{p_2} \cdots$ such that 
    \begin{enumerate}[(\roman*)]
    \item  For each $n\ge1$, $X_{n}$ is a v-cover of $X_{n-1}$.
    \item For each $n$, there exists  an injective $f_n: (\mathcal{O}^{+}_{X_{n}})^{\oplus N} \hookrightarrow \mathcal{V}|_{X_n}$ satisfying $\pi^{1/p^n} \cdot \text{coker}(f_n) = 0$, and sheaf morphisms $\iota_n:(\CO^{+}_{X_{n-1}})^{\oplus N}|_{X_n}\to (\CO^{+}_{X_{n}})^{\oplus N}$such that the following diagram commutes:
      \begin{center}
\begin{tikzcd}
(\CO^{+}_{X_{n-1}})^{\oplus N}|_{X_n} \arrow[dd, "\iota_n"'] \arrow[rd, "f_{n-1}|_{X_n}"] &            \\
                                                                             & \CV|_{X_n} \\
(\CO^{+}_{X_{n}})^{\oplus N} \arrow[ru, "f_n"']                                         &           
\end{tikzcd}
    \end{center}
    \end{enumerate}
\end{lemma}

\begin{proof}[Proof of Lemma \ref{Lem: construct}]
We prove the lemma by induction on $n$.
For $n=1$, by Definition \ref{def}, there exists a $v$-cover of $X_1\to X$ and an injective $\mathcal O^+_{X_1}$-linear homomorphism $f_1:(\mathcal O_{X_1}^{+})^{\oplus N}\to \CV|_{X_1}$ whose cokernel killed by $\pi^{\frac{1}{p}}$.
Assume now that we have been constructed everything for $n-1$. Since $\CV|_{X_{n-1}}$ is an
almost vector bundle over $X_{n-1}$ by Lemma \ref{compatible}, there exists a
$v$-cover $Y_n\to X_{n-1}$
together with an injective $\mathcal{O}_Y^+$-linear morphism $h:(\CO_{Y_n}^{+})^{\oplus N}\to \CV|_{Y_n}$
such that $\pi^{1/p^n}\cdot\coker(h)=0$.

Denote by $V=\Gamma(Y_n,\mathcal V)$. Then by Theorem \ref{theo: rational v descent}, $V[\frac{1}{\pi}]=\Gamma(Y,\mathcal{V}[\frac{1}{\pi}])$ is a rank $N$ projective $A$-module. By Lemma \ref{free}, there exists a $v$-cover $X_n=\Spa(A_n,A_n^+)\to Y_n$ such and a free $A_n^+$-submodule $F_n\subseteq (A_n^+\otimes_{A^+}V)^{\pi-\text{tf}}$ of rank $N$ containing the images of both $h$ and $f_{n-1}$. 

Define the morphism $f_n: (\CO^{+}_{X_{n}})^{\oplus N} \to \CV|_{X_n}$ via the inclusion $F_{n} \hookrightarrow \Gamma(X_n, \CV|_{X_n})$. 
Furthermore, the inclusion of the image of $f_{n-1}$ into $F_n$ induces the injection $\iota_n$ and the inclusion of the image of $h$ into $F_n$ implies  $\pi^{1/p^n}\cdot\coker(f_n)=0$.
\end{proof}

Back to the proof of Theorem \ref{Thm: vb is svb}, let
$ X_\infty := \varprojlim X_n$
which is obviously affinoid perfectoid (denoted by $\Spa(A_{\infty},A^+_{\infty})$) and is a $v$-cover of $X$. Then, Lemma \ref{Lem: construct} implies that for any $\alpha\in A^{\circ\circ}$, there exists an injective homomorphism 
\[(\CO_{X_{\infty}}^{+})^{\oplus N}\to \CV|_{X_{\infty}}\]
whose cokernel killed by $\alpha$. Then it follows that, for any affinoid perfectoid space $Y=\Spa(B,B^+)/X_{\infty}$,
\[B^+\otimes_{A_{\infty}^+}\Gamma(X_\infty,\CV)\to \Gamma(Y,\CV)\]
is an almost isomorphism. The theorem then follows.
\end{proof}

Now, we proceed to prove Theorem \ref{theo: v descent of almost vb}. 
Since the sheafification argument for $\nu^*$ involves handling various descent data of $v$-sheaves, we need the following proposition. Note the complete tensor product is well defined  since the category $\mathbf{Mod}_{A^+}^a$ is both complete and cocomplete according to \cite[Corollary 2.2.15.]{Gabber2002AlmostRT}.
\begin{proposition}\label{TECH}
    Let \(f:Y=\mathrm{Spa}(B,B^+)\rightarrow X=\mathrm{Spa}(A,A^+)\) be a \(v\)-cover of affinoid perfectoid spaces. Given an almost finite free \(B^{+,a}\)-module \(V\) of rank \(N\) and a \((B^{+}\widehat{\otimes}_{A^{+}}B^{+})^a\)-module isomorphism
    \[
    \psi : B^{+,a} \widehat{\otimes}_{A^{+,a}} V \xrightarrow{\sim} V \widehat{\otimes}_{A^{+,a}} B^{+,a}
    \]
    that satisfies the cocycle condition, then for any \(\alpha \in A^{\circ\circ}\), there exists an analytic cover \(\{\mathrm{Spa}(A_i,A_i^+)\to \mathrm{Spa}(A,A^+)\}_{i\in I}\) such that for each \(i\in I\):
    \begin{enumerate}[(\roman*)]
        \item There exists an \(A_i^{+,a}\)-module morphism
        \[
        g_i: (A_i^{+,a})^N \to W_{A_i^{+}}:=\{v\in V_i \mid \psi_{{A_i^{+}}}(1\widehat{\otimes}_{A_i^{+,a}}v)=v\widehat{\otimes}_{A_i^{+,a}}1\}
        \]
        such that \(\alpha\cdot \mathrm{coker}(g_i)=0\), where \(\psi_{A_i^+}\) is the base change of \(\psi\) to \(A_i^{+,a}\) and \(V_i := V\widehat{\otimes}_{A^{+,a}}A_i^{+,a}\).

        \item The natural \(B_i^{+,a}\)-module morphism
        \[
        h_i:W_{A_i^+}\widehat{\otimes}_{A_i^{+,a}} B^{+,a}_i\to V_i
        \]
        satisfies \(\alpha\cdot \mathrm{coker}(h_i)=0=\alpha\cdot \mathrm{ker}(h_i)\), where \(B_i^+:=B^+\widehat{\otimes}_{A^+} A_i^+\).
    \end{enumerate}
    Note that each \(\ker g_i\) is automatically zero.
\end{proposition}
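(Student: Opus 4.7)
The plan is to bootstrap from classical rational $v$-descent (Theorem \ref{theo: rational v descent}) to the almost integral statement via Lemma \ref{free}. First, I would invert $\pi$ throughout: the pair $(V[\tfrac{1}{\pi}],\psi[\tfrac{1}{\pi}])$ is a standard rational $v$-descent datum for a projective module of rank $N$ over $B$, so by Theorem \ref{theo: rational v descent} it descends canonically to a rank-$N$ projective $A$-module $P$ together with an isomorphism $\Phi\colon P\otimes_A B\xrightarrow{\sim}V[\tfrac{1}{\pi}]$ intertwining $\psi[\tfrac{1}{\pi}]$ with the tautological descent datum on $P\otimes_A B$. Because $\pi$ is a non-zero-divisor and localization commutes with equalizers, taking $\psi$-invariants commutes with inverting $\pi$; in particular the $A^+$-module of invariants $W_{A^+}\subseteq V$ (defined analogously to the $W_{A_i^+}$ of the statement) embeds into $P$ via $\Phi^{-1}$ and satisfies $W_{A^+}[\tfrac{1}{\pi}]=P$, hence is an open $A^+$-submodule of $P$ in the canonical topology.

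Second, I would fix the given $\alpha\in A^{\circ\circ}$ and pick an auxiliary $\beta\in A^{\circ\circ}$ much smaller than $\alpha$ (concretely $\beta=\pi^{1/p^k}$ for $k$ large enough that $\beta^2$ divides $\alpha$). By almost finite freeness of $V$ over $B^{+,a}$ I choose elements $v_1,\dots,v_N\in V$ whose $B^+$-span $\beta$-generates $V$. Using $\Phi$ to expand the $v_j$ against a chosen finite $A$-generating set of $P$ and multiplying by appropriate powers of $\pi^{1/p^n}$, I produce a finite subset $S\subseteq W_{A^+}$ whose $B^+$-span contains $\beta v_j$ for each $j$, so that $S$ quantitatively $\beta$-generates $V$ over $B^+$ while lying in the invariants. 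Lemma \ref{free} applied to $P$, its open submodule $W_{A^+}$, and the subset $S$ then yields an analytic cover $\{X_i=\Spa(A_i,A_i^+)\to X\}$ and, for each $i$, a free rank-$N$ $A_i^+$-submodule $F_i\subseteq(W_{A^+}\otimes_{A^+}A_i^+)^{\pi-\text{tf}}$ containing the image of $S$. Composing the identification $(A_i^{+,a})^N\cong F_i^a$ with the natural base-change map $W_{A^+}\otimes_{A^+}A_i^+\to W_{A_i^+}$ gives the required $g_i$. Both $g_i$ and the tautological $h_i$ become isomorphisms after inverting $\pi$ by rational descent and a rank comparison, so their kernels and cokernels are $\pi$-power torsion; the quantitative control from $\beta$ and $S$ forces them to be annihilated by $\alpha$, and the automatic vanishing of $\ker g_i$ follows from $\pi$-torsion-freeness of $W_{A_i^+}\subseteq V_i$.

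The main obstacle is the construction of $S\subseteq W_{A^+}$ in the second paragraph. Almost finite freeness produces generators of $V$ as a $B^{+,a}$-module with no control over $\psi$-invariance, whereas rational descent identifies the invariants only after inverting $\pi$ and a priori introduces unbounded denominators in $\pi$. The balancing act — choosing $\beta$ small enough that the denominators appearing in the lift to $W_{A^+}$ stay within the $\alpha$-almost tolerance — is the quantitative heart of the argument and is where almost mathematics really enters.
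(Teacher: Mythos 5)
Your proposal, while aware of the key difficulty, does not resolve it --- and the difficulty is in fact where the entire mathematical content of the proposition lies. Concretely: almost finite freeness gives $v_1,\dots,v_N\in V$ with $\beta V\subseteq\sum_j B^+ v_j$, and rational $v$-descent identifies an open $A^+$-submodule $W_{A^+}\subseteq P$ with $W_{A^+}[\tfrac{1}{\pi}]=P$. But expanding $v_j=\sum_k q_k c_{jk}$ against a finite set $\{q_k\}\subseteq W_{A^+}$ produces coefficients $c_{jk}\in B$ whose $\pi$-adic denominators are governed by the distance between $W_{A^+}\widehat{\otimes}_{A^+}B^+$ and $V$ inside $P\otimes_A B$ --- which is precisely the cokernel of the map $h$ that the proposition asks you to bound. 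Shrinking $\beta$ does not touch this: the $c_{jk}$ depend on how $W_{A^+}\widehat{\otimes}B^+$ sits inside $V$, not on the choice of the $v_j$, and that inclusion can fail to be saturated by an arbitrarily large power of $\pi$. So the claim "I produce a finite subset $S\subseteq W_{A^+}$ whose $B^+$-span contains $\beta v_j$" is exactly the content of part (ii), not a consequence of the setup, and your outline supplies no mechanism to establish it. There is also a secondary gap: even granting such an $S$, the base-change map $W_{A^+}\otimes_{A^+}A_i^+\to W_{A_i^+}$ need not be close to an isomorphism, since equalizers do not commute with (completed) base change, so your $g_i$ would land in the wrong module with uncontrolled cokernel.

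The paper's proof supplies the missing mechanism by passing to the transition matrix $G\in\mathrm{GL}_N(B\widehat{\otimes}_A B)$ of the descent datum and showing that, analytically locally on $X$, $G$ splits as $p_1^*S^{-1}\cdot p_2^*S$ with $S\in\mathrm{GL}_N(A_i)$ and $\beta S$, $\beta S^{-1}$ integral. This is done in two stages: a pointwise case at each $x\in X$ (reducing to completely faithfully flat descent over the rank-one valuation ring $k(x)^\circ$, where the invariants $W$ are shown to be almost free), followed by an approximation over a rational neighborhood and iterative correction of the near-trivial cocycle $H=p_1^*S_U\cdot G\cdot p_2^*S_U^{-1}$ using the almost vanishing of $\check{H}^1\big(Y/X,M_r(\mathcal{O}^+_X/\pi)\big)$. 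It is this cocycle-cleaning step --- which has no analogue in your argument --- that provides the quantitative control over the gap between the invariants and $V$.
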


\begin{proof}
    We first prove the case where $X=\Spa(K,K^+)$ where $K$ is a perefctoid field and $K^+$ is a valuation ring containing $K^{\circ\circ}$. This is indeed an almost version of completely faithfully flat descent.
    
Note that $f$ induces a ring morphism $f^\circ:K^\circ \to B^\circ$, which is obviously faithfully flat since $K^\circ$ is a valuation ring of rank $1$.
By the complete faithfully flat descent, $(V,\psi)$ descend to a $\pi$-complete and $\pi$-completely flat module over $K^\circ$, which is denoted by $W$. We only need to prove that $W$ is almost free of rank $N$. By Theorem \ref{theo: rational v descent}, $W[\frac{1}{\pi}]$ is a vector space of dimension $N$ and hence the claim is already included in the proof of \cite[Theorem 3.3]{yu2025deltalifting1dimensionalanalyticfields}

Then we turn to general case.
For any  $\beta\in A^{\circ\circ}$, there exists a homomorphism $u: (B^{+})^N \hookrightarrow V$ s.t. $\beta\cdot \coker (u)=0$. Let 
$$p_1, p_2:   Y \times_X Y \to Y$$ 
be the first and second canonical projections. We pull back  $u$ via $p_1$ and $p_2$ and note 
the descent datum $\psi$ is an $ B^{+}\widehat\otimes_{A^{+}} B^{+}$-module isomorphism from $p_2^*V$ to $p_1^*V$.
\begin{center}
\begin{tikzcd}
(B^{+}\widehat{\otimes}_{A^{+}}B^{+})^{N} \arrow[rr, "p^*_2u", hook] &  & p^*_2V=B^{+}\widehat{\otimes}_{A^{+}}V \arrow[d, "{\psi,\cong}"] \\
(B^{+}\widehat{\otimes}_{A^{+}}B^{+})^{N} \arrow[rr, "p^*_1u", hook]                         &  & p^*_1V=V\widehat{\otimes}_{A^{+}}B^{+}                     
\end{tikzcd}
\end{center}
Thus $\psi$ induces some $G\in \mathrm{GL}_N(B\widehat\otimes_AB)$ (Note $G$ may not send $(B^{+}\widehat{\otimes}_{A^{+}}B^{+})^{N}$ to itself).

As $\coker(u)$ is killed by $\beta$, elements of both $\beta G$ and $\beta G^{-1}$ lie in $B^{+}\widehat{\otimes}_{A^{+}}B^+$.

Let $p_{ij}: Y \times_X Y \times_X Y \to Y \times_X Y$ for $1 \leq i < j \leq 3$ denote the canonical projection onto the $i$-th and $j$-th factors. The cocycle condition for $\psi$ implies
\[
    p_{12}^*G \cdot p_{23}^*G - p_{13}^*G = 0.
\]

We claim that for any $x\in X$, there exists an affinoid perfectoid open neighborhood $x\in U=\Spa(A_1,A_1^+)\subseteq X$ and a matrix $S_U\in \mathrm{GL}_N(A_1)$ such that 
\begin{enumerate}[(1)]
    \item $p_1^*S_U^{-1}\cdot p_2^*S_U=G$ as matrices with coefficients in $B\widehat{\otimes}_AB\widehat{\otimes}_AA_1$;
    \item elements of both $\beta S_U$ and $\beta S_U^{-1}$ lie in $A_1$.
\end{enumerate}
Obviously, this claim implies the theorem since $X$ is quasicompact.

Indeed, apply $-\widehat{\otimes}_{A^{+,a}}k(x)^{+,a}$ on everything, we see 
$\psi_x:B^{+,a}_x\widehat{\otimes}_{k(x)^{+,a}}V_x\to V_x\widehat{\otimes}_{k(x)^{+,a}}B^{+,a}_x$
satisfies the cocycle condition where $B_x^{+}:=B^{+}\widehat{\otimes}_{A^{+}}k(x)^{+}$, and $V_x:=V\widehat{\otimes}_{A^{+,a}}k(x)^{+,a}$. 
Let $G_x$ be the base change of $G$ to $k(x)$. Then the case of adic points implies that there exists an $S_x\in \mathrm{GL}_N(B_x)$ such that
\[p_1^*S_x^{-1}\cdot p_2^{*}S_x=G_x,\]
and that elements of $\beta S_x$ adn $S_x^{-1}$ lies in $B_x^+$.

Approximate $S_x$ by some $S_U \in \mathrm{GL}_N\big(B \widehat{\otimes}_{A} \mathcal{O}_{X}(U)\big)$ for a rational neighborhood $U \subset X$ of $x$ in $X$ so that all elements of $\beta S_U$ and $\beta S_U^{-1}$ lie in $\CO_X^+(U)$ and that \[H_U:=p_1^*S_U\cdot G\cdot p_2^*S_U^{-1}\equiv 1 \pmod{\pi}.\] 
Replace $X$ by $U$ and let $C$ satisfies $H=1+\pi C$. Then by the cocycle condition inherited from $G$ on $H$ we have
\[
(1+\pi p^*_{12}C)(1+\pi p^*_{23}C)\equiv(1+\pi p^*_{13}C) \mod \pi^2.
\]
This implies that the projection of $C$ in $\CO_X^+(U)/\pi$ lies in $[\check{H}^1(Y/X, M_r(\mathcal{O}_X^{+}/\varpi))]^a$, which is zero in $\mathbf{Mod}_{A^+}^a$ by  \cite[Theorem 17.1.3]{Scholze_2020}. Hence, we can conjugate $H$ by a matrix in $1 + \varpi^{1-\epsilon} M_r(B^{+,a})$ so as to assume that $H \equiv 1 \pmod{\varpi^{2-\epsilon}}$ for any $\epsilon > 0$. The claim then follows from an approximation argument on $H$.
\end{proof}

Now, we can prove Theorem \ref{theo: v descent of almost vb}, which we restate here.
\begin{theorem}
\label{SAAVB}
Given an affinoid perfectoid space $X=\Spa(A,A^+)$, and denote by 
 \[
\nu: X^{\sim}_\text{v} \to X^{\sim}_\text{an}.
 \]
 the natural projection.
Then, the functor $\nu_*$ induces an equivalence of categories:
\[
\nu_*:\{\text{Almost vector bundles on } X_{{v}}^{+,a}\}
\simeq
\{\text{Almost vector bundles on }X_{{an}}^{+,a}\}
\]
\end{theorem}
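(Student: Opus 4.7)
The plan is to exhibit $\nu^*$ as the inverse equivalence to $\nu_*$. By Lemma \ref{compatible}, $\nu^*$ sends analytic almost vector bundles to $v$-almost vector bundles, so it suffices to verify: (a) $\nu_*\mathcal{V}$ is an analytic almost vector bundle for every $v$-almost vector bundle $\mathcal{V}$; (b) the adjunction counit $\nu^*\nu_*\mathcal{V} \to \mathcal{V}$ is an isomorphism; and (c) the unit $\mathcal{W} \to \nu_*\nu^*\mathcal{W}$ is an isomorphism on analytic almost vector bundles $\mathcal{W}$.

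For (a), let $\mathcal{V}$ be a $v$-almost vector bundle. Theorem \ref{Thm: vb is svb} supplies an affinoid perfectoid $v$-cover $Y = \Spa(B,B^+) \to X$ and an almost finite free $B^{+,a}$-module $V$ of rank $N$ representing $\mathcal{V}|_Y$, together with a cocycle-satisfying descent datum $\psi \colon B^{+,a} \widehat{\otimes}_{A^{+,a}} V \xrightarrow{\sim} V \widehat{\otimes}_{A^{+,a}} B^{+,a}$. Feeding $(V,\psi)$ into Proposition \ref{TECH}, for every $\alpha \in A^{\circ\circ}$ I obtain an analytic cover $\{\Spa(A_i, A_i^+) \to X\}_{i\in I}$ and injections $g_i \colon (A_i^{+,a})^{N} \hookrightarrow W_{A_i^+}$ whose cokernels are $\alpha$-killed. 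Applying the $v$-sheaf property of $\mathcal{V}$ to the $v$-cover $Y_i := Y \times_X \Spa(A_i, A_i^+) \to \Spa(A_i, A_i^+)$ identifies
\[
\nu_*\mathcal{V}(\Spa(A_i, A_i^+)) = \mathrm{eq}\big( V_i \rightrightarrows V_i \widehat{\otimes}_{A_i^{+,a}} B_i^{+,a} \big) = W_{A_i^+},
\]
where the two arrows are $v \mapsto v \otimes 1$ and $v \mapsto \psi_{A_i^+}(1 \otimes v)$. Consequently $\nu_*\mathcal{V}$ satisfies Definition \ref{def} on the analytic cover $\{\Spa(A_i, A_i^+)\}$, and is therefore an analytic almost vector bundle.

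For (b), the counit restricts over each $Y_i$ to the natural map $h_i \colon W_{A_i^+} \widehat{\otimes}_{A_i^{+,a}} B_i^{+,a} \to V_i$, and Proposition \ref{TECH}(ii) gives $\alpha \cdot \ker(h_i) = 0 = \alpha \cdot \coker(h_i)$; as $\alpha \in A^{\circ\circ}$ is arbitrary (upon refining the analytic cover) the counit becomes an isomorphism in $\mathbf{Mod}_{A^+}^a$. For (c), I use that $\mathcal{O}_X^{+,a}$ is already a $v$-sheaf by \cite[Theorem 17.1.3]{Scholze_2020}; since any analytic almost vector bundle $\mathcal{W}$ is analytically locally built from $\mathcal{O}_X^{+,a}$ up to a bounded almost factor, $v$-to-analytic descent extends to $\mathcal{W}$ and gives the unit isomorphism.

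\textbf{Main obstacle.} The principal burden is step (a): converting the $v$-local almost trivialization of $\mathcal{V}$ into analytic local almost trivializations of $\nu_*\mathcal{V}$. This is exactly the role of Proposition \ref{TECH}, whose proof combines an approximate splitting of the $\mathrm{GL}_N$-cocycle $G \in \mathrm{GL}_N(B \widehat{\otimes}_A B)$ with the vanishing of the \v{C}ech cohomology $\check{H}^1(Y/X, M_N(\mathcal{O}_X^{+}/\varpi))^{a} = 0$ coming from Scholze's $v$-sheafiness theorem. A subsidiary delicacy in (b) is that $\nu^*\nu_*\mathcal{V}$ is computed via sheafification on the $v$-site; the counit nevertheless remains an isomorphism because the $\alpha$-annihilation in Proposition \ref{TECH}(ii) can be arranged uniformly by further refining the analytic cover.
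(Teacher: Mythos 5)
Your proposal is correct and follows essentially the same route as the paper's proof: both reduce to Theorem \ref{Thm: vb is svb} to obtain a $v$-local almost trivialization, then feed the resulting descent datum into Proposition \ref{TECH} to produce analytic almost trivializations of $\nu_*\mathcal{V}$ and to control the counit, and both invoke Scholze's acyclicity theorem \cite[Theorem 17.1.3]{Scholze_2020} (the paper phrases this as $R\nu_*\mathcal{O}_{X_v}^{+,a}=\mathcal{O}_{X_{\mathrm{an}}}^{+,a}$) to handle the unit direction. Your version spells out the equalizer identification $\nu_*\mathcal{V}(\Spa(A_i,A_i^+)) = W_{A_i^+}$ more explicitly than the paper does; otherwise the two arguments coincide.
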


\begin{proof}

By Lemma \ref{compatible}, for any almost vector bundle $\CV$ over $X_{\mathrm{an}}^{+,a}$, $\nu^*\CV$ is an almost vector bundle. Moreover, since $R\nu_{*}\CO_{X_v}^{+,a}=\CO_{X_{\mathrm{an}}}^{+,a}$, the natural morphism $\CV\to \nu_{*}\nu^*\CV$ is an isomorphism.

It remains to prove that for any almost vector bundle $\CV$ on $X_{v}^{+,a}$, $\nu_*\CV$ is an almost vector bundle on $X_{\mathrm{an}}^{+,a}$ and that the natural morphism
\[\nu^*\nu_*\CV\to \CV\]
is an almost isomorphism.

In fact, by Theorem \ref{Thm: vb is svb}, we can choose an affinoid perfectoid $v$-cover $Y=\Spa(B,B^+)\to X$ such that $\CV|_{Y}$ is induced by an almost free module $V/B^{+,a}$. For any $\alpha \in A^{\circ\circ}$, Proposition \ref{TECH} provides an analytic cover $\{\mathrm{Spa}(A_i, A_i^+) \to \mathrm{Spa}(A, A^+)\}_{i \in I}$ such that, for each $i \in I$, there exists an injective $\CO_{X_i}^{+,a}$-linear morphism of almost sheaves \[u:\big(\CO_{X_i}^{+,a}\big)^{\oplus N}\to \nu_*\CV|_{X_i}\]
satisfying that the cokernel of the morphism on $X_v$ induced by $u$ is killed by $\alpha$. This shows that $\coker(u)$ is killed by $\alpha$ and both the kernel and cokernel of the natural morphism
\[\nu^*\nu_*\CV\to \CV\]
is killed by $\alpha$. The theorem then follows immediately.
\end{proof}

\section{$V$-local Structure of Almost Vector Bundles}\label{Sec: Spherical completeness}
In this section, we will use some results on spherical completeness to prove the following theorem. 

\begin{theorem}\label{Thm: SC over v}
Given an  affinoid perfectoid space $X=\Spa(A,A^+)$ and an almost vector bundle $\CV$ of rank $N$ over $\CO_X^{+,a}$ under the v-topology, there exists a v-cover $\{U_\lambda \to U : \lambda \in \Lambda\}$ such that for each $U_{\lambda}$ there is an almost isomorphism:
\[
f:(\CO^{+,a}_{U_{\lambda}})^{\oplus N} \to \CV|_{U_\lambda}
\]
\end{theorem}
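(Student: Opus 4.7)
The plan is to realize the two-step strategy indicated in the introduction: first handle the case of a perfectoid point via spherical completeness, and then globalize by further v-covering. The essential inputs already in the paper are Theorem~\ref{Thm: vb is svb} (every almost vector bundle on $X_v^{+,a}$ is v-locally almost free) and Corollary~\ref{cor: embed to sph cpt perfectoid} (every non-Archimedean field admits a spherically complete perfectoid extension with value group $\mathbb{R}_{>0}$).

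The key local lemma to establish is: over a spherically complete perfectoid field $\widetilde K$ with value group $|\widetilde K^\times| = \mathbb{R}_{>0}$, any almost finite free $\widetilde K^{+,a}$-module of rank $N$ is isomorphic in $\mathbf{Mod}^a$ to $(\widetilde K^{+,a})^{\oplus N}$. To prove this, I would take the $(-)_*$-realization $V_*$, a $\pi$-adically complete, $\pi$-torsion-free module with $V_*[1/\pi] \cong \widetilde K^N$. By the non-Archimedean Hahn--Banach / Ingleton theorem, spherical completeness of $\widetilde K$ yields an orthogonal basis of $\widetilde K^N$, and since $|\widetilde K^\times| = \mathbb{R}_{>0}$ the basis can be normalized to an orthonormal one. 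The free lattice spanned by this orthonormal basis agrees with $V_*$ up to an almost equivalence: the obstruction in Example~\ref{Ex: AF not F} --- namely, the difference between an open ball $I_r$ and the principal ideal $a\widetilde K^+$ with $r = |a|$ --- disappears in the almost category, since the quotient $a\widetilde K^+ / I_r$ is annihilated by every element of $\widetilde K^{\circ\circ}$. Packaging these local comparisons, together with $V$ being almost finite free, assembles the desired almost isomorphism $(\widetilde K^{+,a})^{\oplus N} \cong V$.

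For the global step, Theorem~\ref{Thm: vb is svb} produces an affinoid perfectoid v-cover $Z = \Spa(C, C^+) \to X$ on which $\CV|_Z$ arises from an almost free $C^{+,a}$-module $V$. I would then construct a further affinoid perfectoid v-cover $Y \to Z$ such that, at every point $y \in Y$, the completed residue field $k(y)$ is spherically complete perfectoid with value group $\mathbb{R}_{>0}$. A natural approach is to first pass to a strictly totally disconnected affinoid perfectoid v-cover of $Z$ (making all residue fields algebraically closed perfectoid), and then perform a single ring-level construction producing a perfectoid extension in which every residue field becomes spherically complete with value group $\mathbb{R}_{>0}$. On $Y$, applying the key local lemma at every point yields pointwise almost freeness $V \widehat\otimes_{C^{+,a}} k(y)^{+,a} \cong (k(y)^{+,a})^{\oplus N}$; a spreading-out argument using Lemma~\ref{free} and the stalk-computation lemmas of Section~2 then upgrades this to a sheaf-level almost isomorphism $(\CO_Y^{+,a})^{\oplus N} \cong \CV|_Y$.

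The main obstacle I expect is the construction of the v-cover $Y \to Z$: the naive pointwise disjoint union $\coprod_{y \in Z} \Spa(\widetilde{k(y)}, \widetilde{k(y)}^+)$ is not quasi-compact and therefore fails to be a v-cover. Producing a single affinoid perfectoid space realizing all the pointwise spherical completions simultaneously will likely require an explicit construction at the level of perfectoid rings --- perhaps an ultra-product or a maximal-completion-type procedure applied to a pro-finite product of algebraically closed perfectoid fields. The key local lemma itself is technically subtle in a second way: reconciling the non-Archimedean functional-analytic input (orthogonal bases under spherical completeness) with the almost mathematical formalism of $(-)_!$ and $(-)_*$ is where all the force of the hypotheses on $\widetilde K$ is used.
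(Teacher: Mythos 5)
Your plan is the right one at the level of strategy---reduce to a locally almost free situation via Theorem~\ref{Thm: vb is svb}, prove a key local lemma over a spherically complete perfectoid field with value group $\mathbb{R}_{>0}$, and then globalize over a suitable $v$-cover---and you correctly flag the genuine obstacle, namely that $\coprod_{y}\Spa(\widetilde{k(y)},\widetilde{k(y)}^+)$ is not quasi-compact. But you do not resolve that obstacle, and the alternatives you float (a strictly totally disconnected cover followed by a further perfectoid extension, then a pointwise spreading-out argument using Lemma~\ref{free}) do not obviously work; spreading out a pointwise almost isomorphism into a sheaf isomorphism is precisely what is hard, since almost isomorphisms are not controlled by a single element of $A^{\circ\circ}$. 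The paper's resolution is simpler and more direct than a strictly totally disconnected cover: the product ring $\prod_{x\in X}\widetilde{k(x)}^+$ is itself integral perfectoid, so \[
U := \mathrm{Spa}\Bigl( \bigl( \textstyle\prod_{x \in X} \widetilde{k(x)}^+ \bigr) \bigl[ \tfrac{1}{\varpi} \bigr], \textstyle\prod_{x \in X} \widetilde{k(x)}^+ \Bigr)
\] is a single affinoid perfectoid space surjecting onto $X$, hence a $v$-cover. Once this is in place a second ingredient is needed that is entirely absent from your sketch: one must show that tensoring an almost finite free module with an infinite product commutes with the product, i.e.\ $M\widehat\otimes_{A^{+,a}}\prod_i N_i \cong \prod_i (M\widehat\otimes_{A^{+,a}} N_i)$ (Lemma~\ref{lemma of commut}). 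This commutation is what lets the pointwise freeness results over each $\widetilde{k(x)}^{+,a}$ assemble into a global free module over the product ring, rather than requiring any spreading out.

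On the local lemma itself, your approach via Ingleton's non-Archimedean Hahn--Banach theorem and orthogonal bases is a reasonable alternative and is in the same spirit as the paper's, but the paper proceeds differently: it proves $\Ext^1_{K^\circ}(K^{\circ\circ},K^{\circ\circ})=0$ directly from spherical completeness by a nested-balls argument (Proposition~\ref{prop: ext of almost free modules sph cpt}), then handles rank $1$ by hand using $|K^\times|=\mathbb{R}_{>0}$, and inducts on the rank using the $\Ext^1$ vanishing (Theorem~\ref{Thm: al free is free}). Your orthonormal-basis route would likely also work but would need care to translate a Banach-space orthonormal basis into a statement about $K^{+,a}$-lattices, whereas the $\Ext^1$ route stays within commutative algebra. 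Either way, the decisive missing step in your proposal is the product-ring construction of $U$ and the commutation Lemma~\ref{lemma of commut}; without those the globalization does not go through.
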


This theorem implies that an almost vector bundle under the $v$-topology is a locally free sheaf, in the sense of Definition \ref{def}. The proof of this theorem is at the end of this section.

\subsection{Properties of spherical completeness}

The key to proving Theorem \ref{Thm: SC over v} lies in the observation that for a spherically complete non-Archimedean field $K$,  finite-rank almost free modules over $K^{+,a}$ admit a well-behaved decomposition theorem.
In this subsection, fix a spherically non-Archimedean field $K$ and denote by $|\cdot|$ its norm. The following computation is well-known (see, for example \cite[Remark 4.2.5]{Bhatt2017Perfectoid})

\begin{proposition}\label{prop: ext of almost free modules sph cpt}
We have $\Ext^1_{K^{\circ}}(K^{\circ\circ}, K^{\circ\circ})=0.$
\end{proposition}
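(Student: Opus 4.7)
The approach is to reduce the computation to the self-injectivity of $K^{\circ}$, which is a classical theorem of Kaplansky: a rank-one valuation ring is injective as a module over itself if and only if it is maximally (equivalently, spherically) complete. Granting this, one has $\Ext^1_{K^\circ}(M, K^\circ) = 0$ for every $K^\circ$-module $M$.

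Given this input, the plan is to apply $\Hom_{K^\circ}(K^{\circ\circ}, -)$ to the tautological short exact sequence
$$0 \to K^{\circ\circ} \to K^\circ \to k \to 0,$$
where $k$ denotes the residue field of $K$. The resulting long exact sequence
$$\Hom_{K^\circ}(K^{\circ\circ}, K^\circ) \to \Hom_{K^\circ}(K^{\circ\circ}, k) \to \Ext^1_{K^\circ}(K^{\circ\circ}, K^{\circ\circ}) \to \Ext^1_{K^\circ}(K^{\circ\circ}, K^\circ)$$
has a vanishing rightmost term by Kaplansky's theorem, so it remains to show $\Hom_{K^\circ}(K^{\circ\circ}, k) = 0$.

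For this, when the value group of $K$ is discrete the ideal $K^{\circ\circ}$ is principal hence free over $K^\circ$, and the proposition is immediate without any of the above machinery. Otherwise the value group is dense, and for any $x \in K^{\circ\circ}$ one can factor $x = y \cdot (x/y)$ with $|y|$ strictly between $|x|$ and $1$, witnessing the idempotency $K^{\circ\circ} \cdot K^{\circ\circ} = K^{\circ\circ}$. Then any $\varphi \in \Hom_{K^\circ}(K^{\circ\circ}, k)$ satisfies $\varphi(K^{\circ\circ}) = \varphi(K^{\circ\circ} \cdot K^{\circ\circ}) \subseteq K^{\circ\circ} \cdot k = 0$, forcing $\varphi = 0$ as required.

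I expect the main obstacle to be the invocation of Kaplansky's self-injectivity theorem, which is the nontrivial input encoding spherical completeness. An alternative route would be to write $K^{\circ\circ}$ as a filtered colimit of principal (hence free) ideals $\{aK^\circ\}_{0 \neq a \in K^{\circ\circ}}$ and compute $\Ext^1_{K^\circ}(K^{\circ\circ}, K^{\circ\circ})$ via a Milnor-type $\varprojlim{}^{1}$ exact sequence; spherical completeness would then need to be used directly to show that this $\varprojlim{}^{1}$ vanishes, essentially by solving a system of additive cocycles whose partial sums form a pseudo-Cauchy sequence of balls.
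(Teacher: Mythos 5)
Your reduction via the short exact sequence $0 \to K^{\circ\circ} \to K^\circ \to k \to 0$ is sound in shape, and the observation that $\Hom_{K^\circ}(K^{\circ\circ}, k) = 0$ follows from idempotency of $K^{\circ\circ}$ (in the dense value group case) is correct. The gap is the appeal to ``Kaplansky's self-injectivity theorem'': no integral domain other than a field is self-injective. For any nonzero non-unit $a \in K^\circ$, the $K^\circ$-linear map $(a) \to K^\circ$, $ax \mapsto x$, cannot extend to an endomorphism of $K^\circ$ (any extension $y \mapsto cy$ would force $ca = 1$), so Baer's criterion already fails on principal ideals. In particular your sweeping claim ``$\Ext^1_{K^\circ}(M, K^\circ) = 0$ for every $K^\circ$-module $M$'' is false, and the citation as stated cannot be used.

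The correct avatar of spherical completeness in this language is \emph{pure}-injectivity (Warfield: a valuation ring is maximally complete iff it is linearly compact iff it is algebraically compact, i.e.\ pure-injective as a module over itself), or equivalently Matlis's theorem that $K/K^\circ$ is an injective $K^\circ$-module when $K^\circ$ is almost maximal. Since $K^{\circ\circ}$ is flat, any extension $0 \to K^\circ \to E \to K^{\circ\circ} \to 0$ is pure, so pure-injectivity does yield $\Ext^1_{K^\circ}(K^{\circ\circ}, K^\circ) = 0$, and then your long exact sequence finishes the argument. So your route can be repaired, but the hypothesis you actually need is strictly weaker than self-injectivity and uses the flatness of $K^{\circ\circ}$ in an essential way, which your write-up elides. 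The paper instead works with $0 \to K^{\circ\circ} \to K \to K/K^{\circ\circ} \to 0$, uses only that $K = \mathrm{Frac}(K^\circ)$ is injective, and proves surjectivity of $\Hom(K^{\circ\circ}, K) \to \Hom(K^{\circ\circ}, K/K^{\circ\circ})$ directly by a nested-balls argument; your suggested $\varprojlim^1$ alternative would reduce to essentially the same computation.
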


\begin{proof}
Throughout this proof, all modules and homomorphisms are considered over the base ring $K^\circ$ unless otherwise specified. Consider the short exact sequence of $K^\circ$-modules:
\[
0 \to K^{\circ\circ} \to K \to K/K^{\circ\circ} \to 0.
\]
Applying the functor $\Hom_{K^\circ}(K^{\circ\circ}, -)$ yields:
\[
0 \to \Hom(K^{\circ\circ}, K^{\circ\circ}) \to \Hom(K^{\circ\circ}, K)  \xrightarrow{\hspace{0.3cm} p \hspace{0.3cm}} \Hom(K^{\circ\circ}, K/K^{\circ\circ}) \to \Ext^1(K^{\circ\circ}, K^{\circ\circ}) \to \Ext^1(K^{\circ\circ}, K).
\]
Since $K$ is the fraction field of the valuation ring $K^\circ$ thus $\Ext^1(K^{\circ\circ}, K) = 0$. Consequently, we have a canonical identification:
\[
\Ext^1(K^{\circ\circ}, K^{\circ\circ}) \cong \Hom(K^{\circ\circ}, K/K^{\circ\circ}) / \im(p).
\]
To show the vanishing of $\Ext^1$, it suffices to prove that the map $p$, induced by the natural projection $K \to K/K^{\circ\circ}$, is surjective. 

We first observe that $\Hom_{K^\circ}(K^{\circ\circ}, K) \cong K$. Indeed, any $f \in \Hom(K^{\circ\circ}, K)$ is uniquely determined by the image of any non-zero element (e.g., $f(\pi)$), as $f(ax) = af(x)$ for all $a \in K^\circ$.

Now, let $f \in \Hom(K^{\circ\circ}, K/K^{\circ\circ})$ be an arbitrary homomorphism. For each $n \geq 0$, choose a representative $x_n \in K$ of $f(\pi^{1/p^n})$. The $K^\circ$-linearity of $f$ implies that for all $n \geq 1$:
\[
x_{n-1} \equiv \pi^{1/p^{n-1} - 1/p^n} x_n \pmod{K^{\circ\circ}},
\]
which is equivalent to the condition $|\pi^{1/p^{n-1}} x_{n-1} - \pi^{1/p^n} x_n| < |\pi|^{1/p^{n-1}}$. Multiplying by $|\pi|^{1 - 1/p^{n-1}}$, we obtain:
\[
|\pi^{1 - 1/p^{n-1}} x_{n-1} - \pi^{1 - 1/p^n} x_n| < |\pi|^{1 - 1/p^{n-1}}.
\]
For each $n$, define the closed ball $B_n := \{ z \in K \mid |z - \pi^{1 - 1/p^n} x_n| \leq |\pi|^{1 - 1/p^n} \}$. The inequality above demonstrates that $\pi^{1 - 1/p^n} x_n \in B_{n-1}$. Since the radii $r_n = |\pi|^{1 - 1/p^n}$ form a non-increasing sequence (as $|\pi| < 1$ and $1-1/p^n$ increases), we have a nested sequence of balls $B_0 \supseteq B_1 \supseteq B_2 \supseteq \dots$. By the spherical completeness of $K$, the intersection $\bigcap_{n \geq 0} B_n$ is non-empty.

Pick an element $x \in \bigcap_{n \geq 0} B_n$. We define a lift $\tilde{f} \in \Hom(K^{\circ\circ}, K)$ by setting $\tilde{f}(\pi) = x$ and, more generally, $\tilde{f}(\pi^{1/p^n}) = \pi^{1/p^n-1}x$. By the definition of $B_n$, we have $|x - \pi^{1 - 1/p^n} x_n| \leq |\pi|^{1 - 1/p^n}$. In fact, since $x \in B_{n+1}$, the ultrametric inequality ensures the strict inequality $|x - \pi^{1 - 1/p^n} x_n| < |\pi|^{1 - 1/p^n}$ holds. Dividing by $|\pi|^{1 - 1/p^n}$, we obtain:
\[
|\pi^{1/p^n - 1}x - x_n| < 1,
\]
which implies $\tilde{f}(\pi^{1/p^n}) \equiv x_n \pmod{K^{\circ\circ}}$. Thus, $p(\tilde{f}) = f$, establishing the surjectivity of $p$.
\end{proof}

\begin{theorem}\label{Thm: al free is free}
Additionally, assume that the value group of $K$ is $\mathbb{R}_{>0}$. Then, every almost free $K^{+,a}$-module $M$ is finitely free in $\mathbf{Mod}_{K^+}^a$.
\end{theorem}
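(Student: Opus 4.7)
The plan is to prove the theorem by induction on the rank $N$, combining the density of the value group with Proposition \ref{prop: ext of almost free modules sph cpt}. I work with the honest $K^+$-module $M_!$, since an isomorphism $M \cong (K^{+,a})^N$ in $\mathbf{Mod}_{K^+}^a$ corresponds to an isomorphism $M_! \cong (K^{\circ\circ})^N$ in $\mathbf{Mod}_{K^+}$. By hypothesis, for each $\alpha \in K^{\circ\circ}$ there is a $K^+$-linear map $\phi_\alpha : (K^{\circ\circ})^N \to M_!$ whose kernel and cokernel are $\alpha$-torsion; $\phi_\alpha$ is automatically injective by the torsion-freeness of the source, and after inverting a pseudo-uniformizer it becomes a $K$-linear isomorphism, giving $M_! \otimes_{K^+} K \cong K^N$. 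An annihilator argument---any putative $K^+$-torsion element of $M_!$ would be annihilated by $K^{\circ\circ}$ (applying the injectivity of $\phi_\alpha$ for all $\alpha$) and hence vanish by $K^{\circ\circ} M_! = M_!$---shows that $M_!$ is torsion-free and embeds into $K^N$. For the base case $N = 1$, the classification of $K^+$-submodules of $K$ (namely $\{|x| < r\}$ or $\{|x| \leq r\}$ for $r \in [0,\infty]$, thanks to the value group being $\mathbb{R}_{>0}$) combined with $K^{\circ\circ} M_! = M_!$ (which excludes the closed balls) and almost freeness (which excludes $r = 0$ and $r = \infty$) forces $M_! = \{x : |x| < r\}$ for some $r \in (0, \infty)$; choosing $a \in K^\times$ with $|a| = r$ then gives $M_! = a \cdot K^{\circ\circ} \cong K^{\circ\circ}$, so $M \cong K^{+,a}$.

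For the inductive step, I fix a $K$-linear surjection $\lambda : M_! \otimes_{K^+} K \cong K^N \twoheadrightarrow K$ and form the short exact sequence in $\mathbf{Mod}_{K^+}^a$
\[
0 \to M' \to M \to M'' \to 0,
\]
where $M' := (\ker(\lambda|_{M_!}))^a$ and $M'' := (\lambda(M_!))^a$. The rank-$1$ argument from the base case applied to the image $\lambda(M_!) \subset K$ shows $M''$ is almost free of rank $1$. For $M'$, the crucial input is the orthogonal-basis theorem in non-Archimedean functional analysis: spherical completeness of $K$ furnishes, for each $\alpha$, an orthogonal basis (with respect to the sup-norm) of the $(N-1)$-dimensional subspace $W_\alpha := \ker(\lambda \circ \phi_\alpha \otimes_{K^+} K) \subset K^N$, which, after rescaling using the value group condition, identifies $W_\alpha \cap I^N$ with $I^{N-1}$. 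Composing with $\phi_\alpha$ yields an injection $I^{N-1} \hookrightarrow M'_!$ whose cokernel is $\alpha$-killed, so $M'$ is almost free of rank $N-1$. By the inductive hypothesis, $M'' \cong K^{+,a}$ and $M' \cong (K^{+,a})^{N-1}$. Finally, applying $\Hom_{K^+}(-, K^{\circ\circ})$ to $0 \to K^{\circ\circ} \to K^+ \to K^+/K^{\circ\circ} \to 0$ and using the projectivity of $K^+$ together with Proposition \ref{prop: ext of almost free modules sph cpt} gives
\[
\Ext^1_{K^{+,a}}(K^{+,a}, K^{+,a}) = \Ext^2_{K^+}(K^+/K^{\circ\circ}, K^{\circ\circ}) \cong \Ext^1_{K^+}(K^{\circ\circ}, K^{\circ\circ}) = 0;
\]
by additivity the displayed extension splits, and $M \cong (K^{+,a})^N$.

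The hard part will be verifying in the inductive step that $M'$ and $M''$ are almost free of the correct ranks: the approximations $\phi_\alpha$ do not a priori respect the decomposition induced by $\lambda$, so both the density of the value group (used to produce generators of prescribed magnitudes in the rank-$1$ pieces) and the spherical completeness (used to supply orthogonal bases identifying $W_\alpha \cap I^N$ with $I^{N-1}$, and then again to ensure the vanishing of $\Ext^1_{K^{+,a}}(K^{+,a}, K^{+,a})$ that makes the extension split) are essential ingredients.
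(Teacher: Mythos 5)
Your proof is correct and takes the same route as the paper: the base case uses the density of the value group to pick a generator $k_0$ with $|k_0| = \sup_{m \in M_!} |m|$ and identify $M_!$ with $k_0 K^{\circ\circ}$, and the inductive step combines a rank-$(N{-}1)$/rank-$1$ short exact sequence with the $\operatorname{Ext}^1$ vanishing of Proposition \ref{prop: ext of almost free modules sph cpt}. The paper merely cites ``induction and the Proposition'' for the general case; your filling-in is correct, and in particular you have rightly identified that spherical completeness enters a second time (via non-Archimedean orthogonal bases, rescaled using the value group) to show that the kernel $M'$ of the chosen projection is itself almost free of rank $N-1$, a point the paper's terseness conceals.
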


\begin{proof}
It is enough to show that $M_!$ is almost isomorphic to a free module.

First, we prove for an almost free module of rank $1$. By choosing a non-zero element $m\in M_!$, we may consider $M_!$ as a $K^{\circ}$-submodule of $K$. Since $|K^\times|=\mathbb{R}_{>0}$, there exists $k_0\in K^\times$ such that
\[|k_0|=\sup_{m\in M}|m|.\]
It is easy to check that $M_!=K^{\circ\circ}k_0$, which is free as an almost module.

The general case follows from an induction on the rank $N$ and Proposition \ref{prop: ext of almost free modules sph cpt}.
\end{proof}

\subsection{Spherical completeness and the structure of almost vector bundles}
For a non-archimedean field $K$, we can follow the construction of a universal field in \cite{robert2000course} to obtain a spherically complete field $\widetilde{K}$. Thus, we have the following lemma.

\begin{lemma}\label{Lem: extension}
For any non-archimedean field $K$ with a rank $1$ valuation corresponding to $\CO_K$, there exists a field extension of $K$, denoted by $\widetilde{K}$, which is perfectoid and spherically complete with respect to a rank $1$ valuation. Furthermore, for a valuation $|\cdot|$ on $K$ with valuation ring $K^+$, this valuation induces a valuation on $\widetilde{K}$ whose valuation group is $\mathbb{R}_{>0}$.
\end{lemma}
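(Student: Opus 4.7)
The plan is to build $\widetilde K$ in three stages, each arranging one of the hypotheses of Theorem \ref{perfectoid}, and then apply that theorem. Throughout, the residue field of $K$ is understood to be of characteristic $p$, as is implicit from the setting in which the lemma is used.

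\emph{Stage 1: perfect residue field of characteristic $p$.} I would first pass to $K_1 := \widehat{K^{\mathrm{alg}}}$, the completion of an algebraic closure of $K$ for the unique extension of the rank-one valuation. Then $K_1$ is complete, has $\mathbb Q$-divisible value group $\Gamma_1 \subseteq \mathbb R_{>0}$, and has algebraically closed residue field $\bar k$ of characteristic $p$ (hence perfect).

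\emph{Stage 2: value group $\mathbb R_{>0}$.} Recall the Gauss extension: for any valued field $L$ and any $\alpha \in \mathbb R_{>0}$, the formula $|\sum a_i T^i| = \max_i |a_i|\alpha^i$ defines a valuation on $L(T)$ with value group $|L^\times| \cdot \alpha^{\mathbb Z}$ and residue field equal to that of $L$. Using a well-ordering of $\mathbb R_{>0}$ and transfinite induction, I would iterate this construction over all $\alpha \in \mathbb R_{>0}$, adjoining an indeterminate $T_\alpha$ with $|T_\alpha| = \alpha$ at stage $\alpha$ (skipping $\alpha$'s that already lie in the current value group). The directed colimit is a valued field whose value group is all of $\mathbb R_{>0}$ and whose residue field is still $\bar k$. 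Let $K_2$ be its completion; since completing a rank-one non-archimedean field preserves both the value group and the residue field, $K_2$ has value group $\mathbb R_{>0}$ and residue field $\bar k$.

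\emph{Stage 3: spherical completeness.} By the classical theorem of Krull (refined by Kaplansky), every valued field admits a maximal immediate extension, which is automatically spherically complete. Let $\widetilde K$ be such an extension of $K_2$. Then $\widetilde K$ is spherically complete, with value group $\mathbb R_{>0}$ (which is $p$-divisible) and residue field $\bar k$ (perfect, of characteristic $p$). Theorem \ref{perfectoid} now implies that $\widetilde K$ is a perfectoid field, completing the proof.

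The main obstacle is Stage 2: one must make sure that the transfinite iteration of Gauss extensions really does not enlarge the residue field and that passing to the colimit followed by completion yields exactly $\mathbb R_{>0}$ as the value group. Both points follow from the standard properties of Gauss extensions and from the fact that for a rank-one valuation both value group and residue field are preserved under completion, but setting up the transfinite iteration cleanly is the step that requires the most care. Stages 1 and 3 are then routine applications of well-known results.
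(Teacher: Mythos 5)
The paper's own proof is a one-liner: it cites Robert's construction of the ``universal field'' (\emph{A Course in $p$-adic Analysis}, \S 2.2), which directly produces a spherically complete, algebraically closed extension of $K$ with value group $\mathbb{R}_{>0}$; perfectoidness is then automatic since the field is algebraically closed and complete of residue characteristic $p$, with no appeal to Theorem \ref{perfectoid}. Your route is genuinely different: you build the field in stages and then invoke Theorem \ref{perfectoid}, which is actually the route the introduction of the paper advertises (Corollary \ref{cor: embed to sph cpt perfectoid} ``as a corollary'' of Theorem \ref{perfectoid}). Stages 1 and 3 are fine.

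There is, however, a genuine gap in Stage 2. You claim the residue field ``is still $\bar k$'' after transfinitely iterating Gauss extensions over all of $\mathbb{R}_{>0}$. But the residue field of the Gauss $\alpha$-norm on $L(T)$ is $\bar L$ only when $\alpha$ has \emph{infinite} order modulo $|L^\times|$; if the smallest $m\ge 1$ with $\alpha^m\in|L^\times|$ is $m>1$, the residue field grows to $\bar L(\overline{T^m/c})$, a purely transcendental extension. Your well-ordering of $\mathbb{R}_{>0}$ does not prevent the second case: after you adjoin, say, $\alpha$ at an early stage, the element $\alpha^{1/2}$ (processed later, if not already in the group) has order $2$ modulo the then-current value group, and the residue field picks up a transcendental. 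Since $\bar k(t)$ is not perfect in characteristic $p$, hypothesis (ii) of Theorem \ref{perfectoid} fails for the field you hand to Stage 3, and a maximal immediate extension preserves the (now imperfect) residue field. The fix is cheap — adjoin only a set of $\alpha$'s whose logarithms, together with $\log|K_1^\times|$, form a $\mathbb{Q}$-independent spanning set of $\mathbb{R}$ (so every Gauss extension in the tower has infinite order and the residue field never moves), and then take one more algebraic closure and completion at the end to re-divisibilize the value group — but as written the claim in Stage 2 is false, not merely delicate.
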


\begin{proof}
Following the construction in \cite[Section 2.2]{robert2000course}, there exists a spherically complete field $\widetilde{K}$ associated with $K$. Since $K$ contains $\mathbb{Q}_p$, it follows from \cite[Proposition 2.2.3]{robert2000course} that value group of  $\widetilde{K}$ is $\mathbb{R}_{>0}$.
\end{proof}

Although $\widetilde{K}$ may be made perfectoid via an appropriate field extension, a more general result can be established.
\begin{theorem}\label{perfectoid}
Let $K$ be a spherically complete non-Archimedean field. Assume that:
\begin{enumerate}[(\roman*)]
    \item The value group $\Gamma$ is p-divisible;
    \item The residue field $k$ of $K$ is a perfect field.
\end{enumerate}
Then $K$ is a perfectoid field.
\end{theorem}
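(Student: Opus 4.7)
The plan is to reduce the perfectoid condition to surjectivity of the $p$-th power map on $\mathcal{O}_K/p$ and then to construct sufficiently good approximate $p$-th roots by an iteration whose convergence is guaranteed by spherical completeness. Recall that $K$ is perfectoid iff it is a complete non-Archimedean field of residue characteristic $p$, with non-discrete rank one valuation, such that Frobenius on $\mathcal{O}_K/p$ is surjective. Completeness follows from spherical completeness; since the residue field has characteristic $p$ we automatically have $|p|<1$; and the (necessarily nontrivial) $p$-divisible value group $\Gamma$ is dense in $\mathbb{R}_{>0}$, hence the valuation is non-discrete. So the only remaining point is to show: for every $x\in\mathcal{O}_K$ there exists $y\in\mathcal{O}_K$ with $|y^p-x|\leq|p|$.

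First I would describe a single \emph{improvement step}: given $y\in\mathcal{O}_K$ with $r:=|y^p-x|>0$, use $p$-divisibility of $\Gamma$ to pick $\alpha\in K^\times$ with $|\alpha|^p=r$, use perfectness of $k$ to pick $\bar b\in k$ whose $p$-th power equals the reduction of $(y^p-x)/\alpha^p\in\mathcal{O}_K^\times$, lift $\bar b$ to $b\in\mathcal{O}_K$, and set $y':=y-b\alpha$. The binomial expansion of $(y-b\alpha)^p-x$ splits into the ``main'' term $(y^p-x)-b^p\alpha^p$, which by the choice of $\bar b$ has norm strictly less than $r$, and middle terms $\binom{p}{i}(-b\alpha)^iy^{p-i}$ for $1\leq i\leq p-1$, each of norm at most $|p|\cdot|b\alpha|^i\leq|p|r^{1/p}$ because $p\mid\binom{p}{i}$ and $|b\alpha|\leq r^{1/p}\leq 1$. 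Hence $|y'^p-x|\leq\max(s,|p|r^{1/p})$ for some $s<r$, which is strictly less than $r$ precisely when $r>|p|^{p/(p-1)}$; moreover $|y'-y|\leq r^{1/p}$ by construction. Perfectness of $k$ also furnishes an initial $y_0$ with $|y_0^p-x|<1$.

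Next I would package the iteration via Zorn's lemma combined with spherical completeness. Define
\[
T:=\{(y,r)\in\mathcal{O}_K\times\mathbb{R}_{>0}:|y^p-x|\leq r,\ r\geq|p|^{p/(p-1)}\}
\]
with partial order $(y_1,r_1)\leq(y_2,r_2)\Longleftrightarrow r_1\geq r_2$ and $|y_1-y_2|\leq r_1^{1/p}$. For a chain $\{(y_\alpha,r_\alpha)\}$ in $T$, the closed balls $\{z\in K:|z-y_\alpha|\leq r_\alpha^{1/p}\}$ are nested, so spherical completeness produces a common point $y^*$; setting $r^*:=\inf_\alpha r_\alpha$ and applying the same binomial estimate to $y^*-y_\alpha$ yields $|(y^*)^p-x|\leq\max(r^*,|p|(r^*)^{1/p})$. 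If $r^*<|p|^{p/(p-1)}$, this right-hand side is already strictly less than $|p|$ and the theorem is proved; otherwise $(y^*,r^*)\in T$ is an upper bound for the chain. Zorn then produces a maximal element $(y^*,r^*)\in T$, and the improvement step applied at $y^*$ produces a strictly larger element of $T$ unless the new radius has dropped below $|p|^{p/(p-1)}$, in which case once again $|y^p-x|<|p|$.

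The main obstacle is precisely the mixed-characteristic loss coming from the middle binomial coefficients: after the spherical-completeness intersection step, the best available bound on $|(y^*)^p-x|$ is $\max(r^*,|p|(r^*)^{1/p})$ rather than $r^*$ itself. The point of the cutoff $r\geq|p|^{p/(p-1)}$ in the poset $T$ is exactly to guarantee that whenever this maximum fails to equal $r^*$ one has already crossed below the target $|p|$, so the cutoff is simultaneously compatible with the improvement step, with passage to chain limits via spherical completeness, and with the final goal.
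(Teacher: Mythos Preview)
Your argument is correct and takes a genuinely different route from the paper. You run a Newton-style approximation directly inside $K$: the improvement step uses $p$-divisibility of $\Gamma$ and perfectness of $k$ to manufacture a better approximate $p$-th root, the cutoff $r\geq|p|^{p/(p-1)}$ is exactly what keeps the binomial cross-terms harmless, and spherical completeness (packaged via Zorn on the nested balls $B(y_\alpha,r_\alpha^{1/p})$) handles the passage to limits. The paper instead passes to the completed algebraic closure $L=\widehat{\overline{K}}$, invokes Ingleton's non-Archimedean Hahn--Banach theorem to produce a norm-$\leq 1$ $K$-linear retraction $f:L\to K$, and composes $\overline{f}$ with $\varphi_L^{-1}$ to build a one-sided inverse $\sigma$ of the Frobenius $\varphi_K:\mathcal{O}_K/\varpi\to\mathcal{O}_K/\varpi^p$; the same two hypotheses then enter through the decomposition $x=y^p(1+u)$, which forces $\sigma$ to be injective and hence $\varphi_K$ surjective. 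Your approach is more elementary and entirely self-contained---no auxiliary perfectoid overfield, no functional-analytic black box---while the paper's approach is more conceptual and showcases a nice use of Hahn--Banach over spherically complete fields. One minor technical remark: your relation on $T$ is only a preorder (antisymmetry fails when $r_1=r_2$ and $|y_1-y_2|\leq r_1^{1/p}$), so either pass to the quotient partial order or note that Zorn's lemma goes through verbatim for preorders with ``maximal'' read as ``admits no strictly larger element''.
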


Before the proof, we recall a non-Archimedean version of the Hahn-Banach theorem due to Ingleton.
\begin{theorem}[{\cite[Theorem 3]{Ingleton_1952}}]\label{Hahn}
Let $K$ be a spherically complete field and $V$ a Banach space over $K$. If $W$ is a subspace of $V$ with a linear operator  
\[
f:W\to K, \|f\|\le 1,
\]
then there exists a linear functional $\widetilde{f}: V \to K$ extending $f$ such that $$\|\widetilde{f}\| = \|f\|\quad \text{and}\quad \widetilde{f}|_W = f.$$
\end{theorem}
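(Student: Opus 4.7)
The plan is to prove this by a standard Zorn's lemma argument, reducing to the one-dimensional extension problem, which is then solved using spherical completeness applied to a suitable family of closed balls in $K$. Without loss of generality we may assume $\|f\| = 1$ (or more precisely, rescale so that $\|f\| \leq 1$ and aim to produce $\widetilde{f}$ with $\|\widetilde{f}\| \leq 1$; the other direction $\|\widetilde{f}\| \geq \|f\|$ is automatic from being an extension).

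Consider the poset $\mathcal{P}$ whose elements are pairs $(W', g)$ with $W \subseteq W' \subseteq V$ a $K$-subspace and $g : W' \to K$ a linear map extending $f$ with $\|g\| \leq 1$, ordered by extension. Chains in $\mathcal{P}$ have obvious upper bounds given by unions, so Zorn's lemma yields a maximal element $(W_0, g_0)$. I would then argue that $W_0 = V$: if not, pick $v \in V \setminus W_0$ and produce a strictly larger extension to $W_0 \oplus Kv$, contradicting maximality. The extension is determined by a single value $c := \widetilde{g}(v) \in K$, and the norm constraint $\|\widetilde{g}\| \leq 1$ translates, after applying linearity and homogeneity (dividing by $\lambda$ in $g(w) + \lambda c$ for $\lambda \neq 0$), to the requirement that
\[
|c - g_0(u)| \leq \|v - u\| \qquad \text{for all } u \in W_0.
\]
That is, $c$ must lie in the intersection $\bigcap_{u \in W_0} B_u$ of the closed balls $B_u := \{z \in K : |z - g_0(u)| \leq \|v - u\|\}$.

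The heart of the argument, and where spherical completeness enters, is to verify that the family $\{B_u\}_{u \in W_0}$ is totally ordered by inclusion. For any $u_1, u_2 \in W_0$, the bound $\|g_0\| \leq 1$ together with the ultrametric inequality gives
\[
|g_0(u_1) - g_0(u_2)| = |g_0(u_1 - u_2)| \leq \|u_1 - u_2\| \leq \max\bigl(\|v - u_1\|, \|v - u_2\|\bigr).
\]
In an ultrametric space, this precisely says that $B_{u_1} \cap B_{u_2} \neq \emptyset$, which forces one of the two balls to contain the other. Hence $\{B_u\}_{u \in W_0}$ is a chain of closed balls, and spherical completeness of $K$ produces a point $c$ in the intersection. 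Setting $\widetilde{g}(w + \lambda v) := g_0(w) + \lambda c$ then yields the desired strict extension, completing the contradiction.

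The main obstacle is really the bookkeeping of this one-step extension: verifying that the choice of $c$ in the intersection of balls gives a well-defined linear map with norm $\leq 1$, and checking the ultrametric nesting computation cleanly. The Zorn's lemma framing and the use of spherical completeness for the chain of balls are standard, but one must be careful that the family of balls considered is indexed by all of $W_0$ (not just a countable cofinal subfamily), so the formulation of spherical completeness used must allow arbitrary chains of balls — which is the standard strong form of the definition and holds in our setting.
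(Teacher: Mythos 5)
The paper does not give a proof of this statement; it simply cites Ingleton's 1952 paper, where Theorem 3 is exactly this non-Archimedean Hahn–Banach theorem. So there is no in-paper argument to compare against, but your proposal is a correct and complete reproduction of the standard (and indeed Ingleton's own) argument.

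The key points all check out. The reduction via Zorn's lemma to the one-dimensional extension problem is routine. The translation of the norm bound $\|\widetilde{g}\|\le 1$ on $W_0\oplus Kv$ into the condition $c\in\bigcap_{u\in W_0}B_u$ with $B_u=\{z\in K:|z-g_0(u)|\le\|v-u\|\}$ is obtained correctly by dividing through by $\lambda\ne 0$ (and the case $\lambda=0$ is vacuous). The nesting of the balls is exactly the right use of the ultrametric inequality: from $\|g_0\|\le 1$ and the strong triangle inequality in $V$ one gets
\[
|g_0(u_1)-g_0(u_2)| \le \|u_1-u_2\| \le \max\bigl(\|v-u_1\|,\|v-u_2\|\bigr),
\]
which says any two of the balls $B_{u_1}, B_{u_2}$ meet, and in an ultrametric field two closed balls that meet are nested, so the whole family is a chain. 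Spherical completeness then gives $c$ in the intersection, and $\widetilde g(w+\lambda v)=g_0(w)+\lambda c$ is a strict extension with operator norm at most $1$, contradicting maximality. Your caveat about arbitrary versus countable chains is fine but not really an obstacle: for a field with rank-one valuation the two notions of spherical completeness coincide, and in any case the standard definition allows arbitrary nested families.
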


\begin{proof}[Proof of Theorem \ref{perfectoid}]
Fix a pseudo-uniformizer $\varpi \in \mathcal{O}_K$ satisfying $p \in \varpi^p \mathcal{O}_K$. To show that $K$ is perfectoid, it suffices to prove that the Frobenius morphism
\[
\varphi_K : \mathcal{O}_K/\varpi \to \mathcal{O}_K/\varpi^p, \quad x \mapsto x^p
\]
is surjective.

Let $L = \widehat{\overline{K}}$ be the completion of the algebraic closure of $K$, which is a perfectoid field. Regarding $L$ as a $K$-Banach space, Theorem \ref{Hahn} ensures the existence of a $K$-linear functional $f: L \to K$ extending the identity map $\mathrm{id}_K: K \to K$ such that $\|f(a)\| \le \|a\|$ for all $a \in L$. The norm condition implies $f(\mathcal{O}_L) \subseteq \mathcal{O}_K$, whence $f$ induces a morphism of $\mathcal{O}_K$-modules $\overline{f}: \mathcal{O}_L/\varpi \to \mathcal{O}_K/\varpi$.

We define a map $\sigma: \mathcal{O}_K/\varpi^p \to \mathcal{O}_K/\varpi$ as the composition in the following commutative diagram:
\begin{center}
\begin{tikzcd}
\mathcal{O}_K/\varpi^p \arrow[r, "\iota"] \arrow[d, "\sigma"'] & \mathcal{O}_L/\varpi^p \arrow[d, "\varphi_L^{-1}"] \\
\mathcal{O}_K/\varpi & \mathcal{O}_L/\varpi \arrow[l, "\overline{f}"] 
\end{tikzcd}
\end{center}
where $\iota$ is the natural inclusion and $\varphi_L^{-1}$ is the inverse of the Frobenius morphism on $L$ (which exists since $L$ is perfectoid). By construction, $\sigma$ satisfies $\sigma(x^p y) = x \sigma(y)$ for $x \in \mathcal{O}_K/\varpi$ and $\sigma(x^p) = x$.

Let $\mathfrak{m}$ denote the maximal ideal of $\mathcal{O}_K$. If $\varphi_K$ were not surjective, then $\sigma$ would not be injective, as $\sigma \circ \varphi_K = \mathrm{id}$. Thus, there would exist a non-zero element $x \in \mathcal{O}_K/\varpi^p$ such that $\sigma(x) = 0$.

We claim that any $x \in \mathcal{O}_K/\varpi^p$ can be written as $x = y^p(1+u)$ for some $y \in \mathcal{O}_K/\varpi$ and $u \in \mathfrak{m}/\varpi^p$. For convenience, we lift these elements to $\mathcal{O}_K$ without changing notation. Since $\Gamma$ is $p$-divisible, we may write $x = z^p v$ for some $v \in \mathcal{O}_K^\times$. Let $\overline{v} \in k$ be the image of $v$ under the residue map. Since $k$ is perfect, there exists $y' \in k$ such that $(y')^p = \overline{v}$. Let $w \in \mathcal{O}_K$ be a lift of $y'$. Setting $u' = v - w^p \in \mathfrak{m}$, the ultrametric inequality implies $|w|^p = |v| = 1$, so $w \in \mathcal{O}_K^\times$. Thus $v = w^p(1 + u)$ where $u = u'/w^p \in \mathfrak{m}$. Setting $y = wz$ yields the desired decomposition.

Returning to the proof, $\sigma(x) = 0$ implies $y \sigma(1+u) = 0$. Since $x \neq 0$ implies $y \neq 0$, we must have $\sigma(1+u) = 0$. Applying the same decomposition to $u \in \mathfrak{m}/\varpi^p$, we can write $u = u_1^p(1+u_2)$ with $u_1\in \CO_K/\varpi$ and $u_2\in \mathfrak{m}/\varpi^p$. Because $1+u_2$ is invertible and $u\in \mathfrak{m}/\varpi^p$, we have $u_1\in \mathfrak{m}/\varpi$. Then $\sigma(u) = u_1 \sigma(1+u_2) \in \mathfrak{m}/\varpi$. It follows that $\sigma(1+u) = 1 + \sigma(u)$ is a unit in $\mathcal{O}_K/\varpi$, contradicting the fact that $\sigma(1+u) = 0$. Therefore, $\varphi_K$ must be surjective.
\end{proof}

Recall that for finitely presented modules, tensor products commute with direct products. The following lemma establishes a corresponding result for almost free modules in the perfectoid setting.

\begin{lemma}\label{lemma of commut}
Let $\Spa(A,A^+)$ be an affinoid perfectoid space, and let $M$ be an almost free $A^{+,a}$-module of rank $N$. For any family $\{N_i\}_{i \in I}$ of complete $A^{+,a}$-modules, there is a natural almost isomorphism:
\[
M \widehat{\otimes}_{A^{+,a}} \prod_{i \in I} N_i \cong \prod_{i \in I} (M \widehat{\otimes}_{A^{+,a}} N_i).
\]
\end{lemma}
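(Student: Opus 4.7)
I plan to reduce the claim to the case of a finite free module and then bootstrap via the almost freeness of $M$. First, when $M = F := (A^{+,a})^{\oplus N}$, the natural comparison morphism $\Phi_F$ is an honest isomorphism: the (completed) tensor product of a finite direct sum distributes, giving $F \widehat{\otimes}_{A^{+,a}} \prod_i N_i \cong (\prod_i N_i)^{\oplus N} \cong \prod_i N_i^{\oplus N} \cong \prod_i (F \widehat{\otimes}_{A^{+,a}} N_i)$; note that a product of $\pi$-complete modules is automatically $\pi$-complete.

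For general almost free $M$, let $\Phi$ denote the natural comparison morphism. To prove it is an almost isomorphism, I will show that for every $\alpha \in A^{\circ\circ}$ both $\ker \Phi$ and $\coker \Phi$ are killed by $\alpha$. Since $A^{\circ\circ} = \bigcup_n \pi^{1/p^n} A^+$, one can find $\beta \in A^{\circ\circ}$ such that $\alpha \in \beta^3 A^+$ (e.g.\ take $\beta = \pi^{1/p^{n+m}}$ with $p^m \geq 3$ when $\alpha \in \pi^{1/p^n} A^+$). By the almost freeness of $M$, choose $f : F \to M$ with $\ker f$ and $\coker f$ both killed by $\beta$. This yields a commutative square
\begin{equation*}
\begin{array}{ccc}
F \widehat{\otimes}_{A^{+,a}} \prod_i N_i & \xrightarrow{\,f \widehat{\otimes} \mathrm{id}\,} & M \widehat{\otimes}_{A^{+,a}} \prod_i N_i \\
\Phi_F \downarrow\, \cong & & \downarrow\, \Phi \\
\prod_i (F \widehat{\otimes}_{A^{+,a}} N_i) & \xrightarrow{\,\prod_i (f \widehat{\otimes} \mathrm{id})\,} & \prod_i (M \widehat{\otimes}_{A^{+,a}} N_i)
\end{array}
\end{equation*}
in which $\Phi_F$ is an isomorphism by the finite free case.

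The bound on $\coker \Phi$ is a short diagram chase: the cokernels of both horizontal arrows are controlled by tensoring $\coker f$ with something, hence killed by $\beta$; for any $y$ on the bottom right, $\beta y$ lifts along the bottom arrow, and transport through $\Phi_F^{-1}$ and $f \widehat{\otimes} \mathrm{id}$ produces a preimage under $\Phi$. This gives $\beta \cdot \coker \Phi = 0$. For $\ker \Phi$, take $w \in \ker \Phi$, lift $\beta w$ along the top arrow to some $w'$, and use commutativity plus the injectivity of $\Phi_F$ to see that $\Phi_F(w') \in \prod_i \ker(f \widehat{\otimes}_{A^{+,a}} \mathrm{id}_{N_i})$. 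Factor $f$ as $F \twoheadrightarrow \im f \hookrightarrow M$; then each $\ker(f \widehat{\otimes} \mathrm{id}_{N_i})$ fits in an extension whose two factors are controlled by $\ker f$ and by $\Tor_1^{A^{+,a}}(\coker f, N_i)$ respectively, both killed by $\beta$. Hence the kernel is killed by $\beta^2$, so $\beta^2 w' = 0$ and $\beta^3 w = (f \widehat{\otimes} \mathrm{id})(\beta^2 w') = 0$, giving $\alpha \cdot \ker \Phi = 0$.

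The main technical obstacle I anticipate is interfacing cleanly with the completed tensor product throughout: ordinary tensor product is only right exact, so the kernel analysis genuinely needs the almost derived $\Tor_1$ of \cite{Gabber2002AlmostRT}, and $\pi$-adic completion must be shown to preserve the relevant almost-exact sequences up to almost zero. This bookkeeping should be tractable because the cokernels and $\Tor$ terms in play are already killed by $\beta$, so their completions are still killed by $\beta$, and products commute with both the almost structure and $\pi$-completion of $\pi$-complete objects.
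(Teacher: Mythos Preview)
Your proposal is correct and follows essentially the same approach as the paper: approximate $M$ by a finite free module via the almost-freeness hypothesis, compare through a commutative square where the free case is an honest isomorphism, and control the discrepancy using that the cokernel of the approximation and the relevant $\Tor_1$ terms are killed by arbitrarily small powers of $\pi$. The only organizational difference is that the paper first disposes of the completion issue up front---showing $M\otimes_{A^{+,a}}N_i \cong M\widehat{\otimes}_{A^{+,a}}N_i$ almost, since $N_i$ is already complete and the approximation maps have cokernel killed by $\pi^{1/p^n}$---and then runs the square argument for ordinary tensor products; this sidesteps the completion bookkeeping you flag at the end, but the underlying idea is identical.
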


\begin{proof}
We first reduce the claim to the case of algebraic (non-completed) tensor products. By the definition of an almost free module, for each $n \geq 1$, there exists an injective morphism of $A^{+,a}$-modules
\[
f_n : (A^{+,a})^N \to M
\]
whose cokernel is annihilated by $\pi^{1/p^n}$. Consider the following commutative diagram:
\begin{center}
\begin{tikzcd}
(A^{+,a})^N \otimes_{A^{+,a}} N_i \arrow[r] \arrow[d, "\cong"'] & M \otimes_{A^{+,a}} N_i \arrow[d] \\
(A^{+,a})^N \widehat{\otimes}_{A^{+,a}} N_i \arrow[r] & M \widehat{\otimes}_{A^{+,a}} N_i
\end{tikzcd}
\end{center}
Since $N_i$ is complete, the left vertical map is an isomorphism. The kernel and cokernel of the horizontal maps are annihilated by $\pi^{1/p^n}$ because $\coker(f_n)$ is so. By letting $n \to \infty$, we deduce a natural almost isomorphism $M \otimes_{A^{+,a}} N_i \cong M \widehat{\otimes}_{A^{+,a}} N_i$. Thus, it suffices to show that
\[
M \otimes_{A^{+,a}} \prod_{i \in I} N_i \cong \prod_{i \in I} (M \otimes_{A^{+,a}} N_i)
\]
Consider the following commutative diagram with exact rows, where $g_n$ and $h_n$ are induced by $f_n$, and we denote $C_n := \coker(f_n)$:
\begin{center}
\begin{tikzcd}
(A^{+,a})^N \otimes \prod N_i \arrow[r, "h_n"] \arrow[d, "\cong"'] & M \otimes \prod N_i \arrow[r] \arrow[d, "\sigma"] & C_n \otimes \prod N_i \\
\prod \left( (A^{+,a})^N \otimes N_i \right) \arrow[r, "g_n"] & \prod (M \otimes N_i) \arrow[r] & \prod (C_n \otimes N_i)
\end{tikzcd}
\end{center}
The terms $C_n \otimes \prod N_i$ and $\prod (C_n \otimes N_i)$ are both annihilated by $\pi^{1/p^n}$. Furthermore, the kernels of $h_n$ and $g_n$ are images of $\Tor_1^{A^{+,a}}(C_n, -)$, which are also annihilated by $\pi^{1/p^n}$. Since these properties hold for all $n \geq 1$, it follows that the map $\sigma$ is an almost isomorphism, which completes the proof.
\end{proof}

With the above preparation, we now start to prove the Theorem \ref{Thm: SC over v}.
\begin{proof}[Proof of Theorem \ref{Thm: SC over v}]
By the equivalence between locally almost free sheaves and almost vector bundles (Theorem \ref{Thm: vb is svb}), we may assume, after replacing $X$ with a suitable affinoid cover, that for every $k \geq 0$, there exists a morphism of $\mathcal{O}_{X}^{+,a}$-modules
\[
h_k: (\mathcal{O}_{X}^{+,a})^{\oplus N} \to \mathcal{V}|_{X}
\]
such that the kernel and cokernel of $h_k$ are killed by $\pi^{1/p^k}$.

For every point $x\in X$, denote the complete residue field by $k(x)$. Note that there is a canonical morphism
\[
A^+\to \prod_{x\in X}\widetilde{k(x)}^+.
\]
where $A^+ = \mathcal{O}_X^+(X)$. Fix a pseudo-uniformizer $\varpi \in A^+$. One checks directly from the definition that $\prod_{x\in X}\widetilde{k(x)}^+$ is an integral perfectoid ring, since each $k(x)^+$ is. We then define
\[
U := \mathrm{Spa}\left( \left( \prod_{x \in X} \widetilde{k(x)}^+ \right) \left[ \frac{1}{\varpi} \right], \prod_{x \in X} \widetilde{k(x)}^+ \right).
\]

We first verify that $U \to X$ is a $v$-cover by showing that the map on underlying topological spaces $|U| \to |X|$ is surjective. Given any valuation $x\in X$, corresponding to a valuation point $\spa(k(x),k(x)^+)$. This gives a valuation on $k(x)$ and hence induces a valuation on $\widetilde{k(x)}$ with valuation ring $\widetilde{k(x)}^+$. By projecting to the component $\widetilde{k(x)}$, this defines a valuation on
\[
\Bigl(\Bigl(\prod_{y\in X\setminus\{x\}}\widetilde{k(y)}^+\Bigr)\oplus \widetilde{k(x)}\Bigr)\Bigl[\frac{1}{\varpi}\Bigr]
\cong \prod_{x\in X}\widetilde{k(x)}^+\Bigl[\frac{1}{\varpi}\Bigr],
\]
which yields the desired surjectivity.
Thus,
\[
\CV(U)\cong \CV(X)\widehat\otimes_{A^+}\CO^{+,a}_U(U)
\cong \CV(X)\widehat\otimes_{A^{+,a}}(\prod_{x\in X}\widetilde{k(x)}^+)^a.
\]
By the above lemma \ref{lemma of commut} and Theorem \ref{Thm: al free is free}, we obtain
\[
\CV(X)\widehat\otimes_{A^{+,a}}\prod_{x\in X}\widetilde{k(x)}^{+,a}
\cong \prod_{x\in X}\left(\CV(X)\widehat\otimes_{A^{+,a}}\widetilde{k(x)}^{+,a}\right)
\cong \left[\left(\prod_{x\in X}\widetilde{k(x)}^+\right)^a\right]^N.
\]
This shows that $\mathcal{V}|_U \cong (\mathcal{O}_U^{+,a})^{\oplus N}$, completing the proof.
\end{proof}

\section{$Arc$-vector Bundles}\label{section:prove arc=v}

The goal of this section is to prove Theorem \ref{theo: avb=arc vb}. First, we recall some basics about the $arc$-topology. Throughout this section, we fix an integral perfectoid ring \( R \) and a pseudo-uniformizer \( \varpi \).
\begin{definition}
	Define the category \( \mathrm{Perf}^{\mathrm{tf}}_{R} \) as follows:
	\begin{enumerate}
		\item The objects are all affinoid formal schemes \( \Spf(S) \), where \( S \) is an integral perfectoid algebra over \( R \), endowed with the \( \varpi \)-adic topology, and the localization \( S \to S[\frac{1}{\varpi}] \) is injective.
		\item The morphisms are all morphisms of \( R \)-formal schemes.
	\end{enumerate}
\end{definition}
It is not difficult to see that \( \mathrm{Perf}^{\mathrm{tf}}_{R} \) has all finite limits. Recall the following definition (cf. \cite[Definition 8.7]{Bhatt_2022}):
\begin{definition}[$arc$-cover]
	Let \( f: X_1 \to X_2 \) be a morphism in \( \mathrm{Perf}_{R}^{\mathrm{tf}} \). We say that \( f \) is an \( arc \)-cover if for every rank \( 1 \) \( \varpi \)-adically complete valuation ring \( V_1 \) and every morphism \( v_1: \Spf(V_1) \to X_1 \), there exists a \( \varpi \)-adically complete rank \( 1 \) valuation ring extension \( V_2 \) of \( V_1 \) and a ring homomorphism \( v_2: \Spf(V_2) \to X_2 \) such that the following diagram commutes:
	\[
	\begin{tikzcd}
	\Spf(V_1) \ar{r} \ar{d}{v_1} & \Spf(V_2) \ar{d}{v_2} \\
	X_1 \ar{r} & X_2
	\end{tikzcd}.
	\]
\end{definition}
It is not difficult to see that \( \mathrm{Perf}_{R}^{\mathrm{tf}} \), together with its \( arc \)-topology, forms a Grothendieck topology. It is important to note that the functor \( \mathbb{G}_a: X \to \Gamma(X, \mathcal{O}_X) \) is generally not a sheaf in the \( arc \)-topology. In fact, one can verify that if there exists an injection \( f: R \to R_1 \) such that \( f \) is an almost isomorphism but not an isomorphism, then the corresponding \( \Spf(R_1) \to \Spf(R) \) is an \( arc \)-cover. In this case, \( \mathbb{G}_a \) does not satisfy the sheaf axiom for this cover.

\begin{definition}[Structural sheaf of $arc$-topology]\label{def: arc sheaf}
	Define \( \mathcal{O}_{\mathrm{Perf}_{R}^{\mathrm{tf}}} \) to be the \( arc \)-sheafification of the functor \( \mathbb{G}_a \), called the structure sheaf of the \( arc \)-topology.
\end{definition}

The sheaf \( \mathcal{O}_{\mathrm{Perf}_{R}^{\mathrm{tf}}} \) admits a more direct description. For any \( X = \Spf(S) \in \mathrm{Perf}_{R}^{\mathrm{tf}} \), recall that \( S_* \) denotes the ring \( \mathrm{Hom}_{R}(R^\circ, S) \), which is isomorphic to the ring of power-bounded elements in \( S[\frac{1}{\varpi}] \).

\begin{theorem}\label{theo: structure arc= power bounded}
	For any \( X = \Spf(S) \in \mathrm{Perf}_{R}^{\mathrm{tf}} \), there is a canonical isomorphism \( \Gamma(X, \mathcal{O}_{\mathrm{Perf}_{R}^{\mathrm{tf}}}) \cong S_* \).
\end{theorem}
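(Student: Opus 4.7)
The plan is to realize $S_*$ as an arc-sheaf that already receives the almost-isomorphic inclusion $\mathbb{G}_a \hookrightarrow S_*$, and then appeal to the universal property of sheafification to identify $\Gamma(X, \mathcal{O}_{\mathrm{Perf}_{R}^{\mathrm{tf}}})$ with $S_*$. First I would define a presheaf $\mathcal{F}$ on $\mathrm{Perf}_{R}^{\mathrm{tf}}$ by $\mathcal{F}(\Spf S) := S_*$; the hypothesis that $S$ is $\varpi$-torsion-free shows that $S$ sits inside the ring of power-bounded elements of $S[1/\varpi]$, producing a monomorphism of presheaves $\mathbb{G}_a \hookrightarrow \mathcal{F}$ which is pointwise an almost isomorphism (since $S_*/S$ is killed by $R^{\circ\circ}$).

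Next I would prove that $\mathcal{F}$ is already an arc-sheaf. The key input is arc-descent for the almostification of integral perfectoid rings: for any arc-cover $\Spf T \to \Spf S$ in $\mathrm{Perf}_{R}^{\mathrm{tf}}$, the sequence
\begin{equation*}
S^a \longrightarrow T^a \rightrightarrows (T \widehat{\otimes}_S T)^a
\end{equation*}
should be an equalizer of almost $R$-algebras, a consequence of the arc-descent results of \cite{Bhatt_2022} combined with the almost-descent input from \cite[Theorem 17.1.3]{Scholze_2020}. Since $(-)_*$ is right adjoint to almostification by Theorem \ref{almost}, it preserves this equalizer, yielding $S_* = \mathrm{eq}(T_* \rightrightarrows (T \widehat{\otimes}_S T)_*)$, i.e., the arc-sheaf property of $\mathcal{F}$. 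By the universal property of sheafification the inclusion $\mathbb{G}_a \hookrightarrow \mathcal{F}$ then factors canonically as $\mathbb{G}_a \to \mathcal{O}_{\mathrm{Perf}_{R}^{\mathrm{tf}}} \to \mathcal{F}$, and the second arrow is injective by left-exactness of sheafification applied to the monomorphism $\mathbb{G}_a \hookrightarrow \mathcal{F}$.

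The remaining task is surjectivity of $\mathcal{O}_{\mathrm{Perf}_{R}^{\mathrm{tf}}}(X) \to S_*$. Given $s \in S_*$, the candidate arc-cover is $Y := \Spf(S_*) \to X$. The ring $S_*$ is integral perfectoid (as the ring of power-bounded elements of a perfectoid Tate ring) and $\varpi$-torsion-free, so $Y \in \mathrm{Perf}_{R}^{\mathrm{tf}}$. The map $Y \to X$ is an arc-cover because any continuous morphism $S \to V$ into a rank-$1$ $\varpi$-adically complete valuation ring extends uniquely to $S_* \to V$: by continuity, power-bounded elements of $S[1/\varpi]$ map to power-bounded elements of $V[1/\varpi] = \mathrm{Frac}(V)$, which coincide with $V$. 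The section $s$ lifts tautologically to $s \in \mathbb{G}_a(Y) = S_*$, and the cocycle condition requires $s \otimes 1 = 1 \otimes s$ in $\Gamma(Y \times_X Y, \mathcal{O})$, where $Y \times_X Y$ is computed as the $\varpi$-torsion-free quotient of $\Spf(S_* \widehat{\otimes}_S S_*)$. Since $S \hookrightarrow S_*$ is an almost isomorphism, the kernel of the multiplication $S_* \widehat{\otimes}_S S_* \to S_*$ is almost zero, so $s \otimes 1 - 1 \otimes s$ is killed by $\varpi^{1/p^n}$ for every $n$ and therefore vanishes in the $\varpi$-torsion-free quotient.

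The principal obstacle is the arc-descent of Step 2, which must be formulated carefully for almost perfectoid algebras in the affinoid torsion-free setting of $\mathrm{Perf}_{R}^{\mathrm{tf}}$; the closest available statements in the literature concern perfect complexes on $p$-adic formal schemes and have to be specialized. A secondary subtlety is the precise description of fibre products in $\mathrm{Perf}_{R}^{\mathrm{tf}}$, since completed tensor products of integral perfectoids can acquire $\varpi$-torsion that must be discarded to remain in the category; fortunately this torsion is an almost-zero ideal, hence invisible after applying $(-)_*$, which is why the argument above survives.
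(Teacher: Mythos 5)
Your overall architecture is the same as the paper's: realize $\Spf(S)\mapsto S_*$ as an $arc$-sheaf $\mathcal F$, observe $\mathbb G_a\hookrightarrow\mathcal F$ is a pointwise almost isomorphism, use the universal property of sheafification to get $\mathcal O_{\mathrm{Perf}_R^{\mathrm{tf}}}\hookrightarrow\mathcal F$, and then upgrade this to an isomorphism by pulling back along the $arc$-cover $\Spf(S_*)\to\Spf(S)$, on which $\mathbb G_a$ and $\mathcal F$ already agree. Your closing observations about $\varpi$-torsion in fibre products being almost zero (hence invisible to $(-)_*$), and the verification that $\Spf(S_*)\to\Spf(S)$ is an $arc$-cover, both match the paper's reasoning.

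The genuine gap is exactly where you flag it: the claim that $\mathcal F$ is an $arc$-sheaf. You invoke ``arc-descent for the almostification of integral perfectoid rings'' and attribute it to a combination of \cite{Bhatt_2022} and \cite[Theorem 17.1.3]{Scholze_2020}, but you also admit ``the closest available statements in the literature concern perfect complexes \dots and have to be specialized.'' This is the entire mathematical content of the theorem, and the paper does not treat it as black-box-citable. Instead, for an $arc$-cover $\Spf(S_2)\to\Spf(S_1)$ in $\mathrm{Perf}_R^{\mathrm{tf}}$, the paper builds rings of integral elements $(S_i')^+$ (the integral closure of $R+S_i^{\circ\circ}$ inside $S_i[\tfrac{1}{\varpi}]$), shows the induced morphism of affinoid perfectoid \emph{spaces} $\Spa(S_2[\tfrac{1}{\varpi}],(S_2')^+)\to\Spa(S_1[\tfrac{1}{\varpi}],(S_1')^+)$ is a $v$-cover (this requires a lifting argument through adic points that extends the $arc$-lifting datum from valuations on $\Spf$-points to valuations on adic space points), applies Scholze's $v$-descent theorem to get exactness of the \v Cech complex at the level of $S_i[\tfrac{1}{\varpi}]$, and only then applies $(-)^\circ$ to land on $S_*$. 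Your version of this step — pass to almost algebras to get an equalizer, then apply the right adjoint $(-)_*$ — would be equivalent \emph{if} you had the almost equalizer, but establishing that almost equalizer is precisely what the paper spends the bulk of its proof on. So while your adjunction trick (right adjoints preserve equalizers) is an elegant reformulation, it does not discharge the real burden; it merely relocates it to a claim you have left to the reader.

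Two smaller remarks. First, your injectivity argument via ``left-exactness of sheafification'' applied to $\mathbb G_a\hookrightarrow\mathcal F$ is correct and is implicitly what the paper uses. Second, your explicit verification of the cocycle condition for $s\in S_*$ over $Y\times_X Y$ is unnecessary: once you know $\mathcal O_{\mathrm{Perf}_R^{\mathrm{tf}}}\hookrightarrow\mathcal F$ with $\mathcal F$ a sheaf, any $s\in\mathcal F(X)$ that restricts into $\mathbb G_a(Y)$ on a cover $Y\to X$ automatically satisfies the gluing condition for $\mathcal O_{\mathrm{Perf}_R^{\mathrm{tf}}}$ because the two restrictions to $Y\times_X Y$ already agree in $\mathcal F$ and the map into $\mathcal F$ is injective. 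The paper's concluding paragraph exploits exactly this shortcut.
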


\begin{proof}
    First, we show that the functor $\mathbb{G}_a^{sh}: X = \Spf(S) \in \mathrm{Perf}^{\mathrm{tf}}_{R} \mapsto S_*$ is an $arc$-sheaf. This is equivalent to proving that for any homomorphism $f: S_1 \to S_2$ of integral perfectoid algebras over $R$ such that the corresponding morphism $\Spf(f): \Spf(S_2) \to \Spf(S_1)$ is an $arc$-cover, the associated \v{C}ech complex
\[ 0 \to (S_1)_* \to (S_2)_* \to (S_{2} \widehat{\otimes}_{S_1} S_2)_* \]
is exact.

For $i \in \{1, 2\}$, let $S_i'$ be the subring of $S_i[\frac{1}{\varpi}]$ generated by $R$ and $S_i^{\circ\circ}$, and let $(S_{i}')^+$ be the integral closure of $S_{i}'$ in $S_{i}[\frac{1}{\varpi}]$. Then the localization $f[\frac{1}{\varpi}]: S_1[\frac{1}{\varpi}] \to S_2[\frac{1}{\varpi}]$ sends $(S_1')^+$ into $(S_2')^+$. We will show that the induced map of perfectoid spaces $\Spa(f)': \Spa(S_2[\frac{1}{\varpi}], (S_2')^+) \to \Spa(S_1[\frac{1}{\varpi}], (S_1')^+)$ is a $v$-cover. If this is established, then by \cite[Theorem 17.1.3]{Scholze_2020}, the \v{C}ech complex induced by $f$,
\begin{equation}\label{eq: arc structure}0 \to S_1\left[\frac{1}{\varpi}\right] \to S_2\left[\frac{1}{\varpi}\right] \to S_2 \widehat{\otimes}_{S_1} S_2\left[\frac{1}{\varpi}\right]\end{equation}
is exact. Applying the functor $(-)^\circ$ to each term of the exact sequence (\ref{eq: arc structure}) yields the required exact sequence.

We now verify that $\Spa(f)': \Spa(S_2[\frac{1}{\varpi}], (S_2')^+) \to \Spa(S_1[\frac{1}{\varpi}], (S_1')^+)$ is a $v$-cover. By definition, it suffices to prove that for any perfectoid Tate–Huber pair $(K, K^+)$ where $K$ is a perfectoid field and $K^+$ is a valuation ring, and for any morphism
\[ x: (S_1, (S_1')^+) \to (K, K^+), \]
there exists a valuation extension $i: (K, K^+) \to (L, L^+)$ and a morphism $y: (S_2, S_2^+) \to (L, L^+)$ such that the following diagram commutes:
\begin{equation}\label{eq: arc-v diagram}
\begin{tikzcd}
    S_1 \ar{r}{f[\frac{1}{\varpi}]} \ar{d}{x} & S_2 \ar{d}{y} \\
    K \ar{r}{i} & L.
\end{tikzcd}
\end{equation}
Let $K^\circ$ be the subring of power-bounded elements in $K$. Then $K^\circ$ is a rank $1$ valuation ring containing $K^+$. Since $\Spf(f)$ is an $arc$-cover, there exists a rank $1$ valuation field extension $i: K \to L$ and a continuous homomorphism $y: S_2[\frac{1}{\varpi}] \to L$ such that diagram (\ref{eq: arc-v diagram}) commutes. Let $L^+ \subseteq L^\circ$ be a valuation extension of $K^+$. By construction, $y(S_2') \subset L^+$. Consequently, as $(S_2')^+$ is the integral closure of $S_2'$ in $S_2[\frac{1}{\varpi}]$, we have $y\big((S_2')^+\big) \subseteq L^+$. This completes the proof.

Finally, we verify that the functor $\mathbb{G}_a^{sh}: X = \Spf(S) \in \mathrm{Perf}^{\mathrm{tf}}_{R} \mapsto S_*$ is the $arc$-sheafification of $\mathbb{G}_a$. By definition, there is a morphism $\mathbb{G}_a \to \mathbb{G}_a^{sh}$. Note that for any $X = \Spf(S) \in \mathrm{Perf}^{\mathrm{tf}}_{R}$, letting $X' = \Spf(S_*)$, the $arc$-cover $X' \to X$ satisfies that the canonical map from $\mathbb{G}_a(X')$ to $\mathbb{G}_a^{sh}(X')$ is an isomorphism. This proves that $\mathbb{G}_a^{sh}$ is the sheafification of $\mathbb{G}_a$.
\end{proof}

For \( S \in \mathrm{Perf}^{\mathrm{tf}}_{R} \), let \( S^+ \) be the integral closure of \( S \) in \( S[\frac{1}{\varpi}] \). Then \( (S[\frac{1}{\varpi}], S^+) \) forms a perfectoid Tate–Huber pair. Let \( U = \Spa(R[\frac{1}{\varpi}], R^+) \). The previous construction gives a functor \( \iota: \mathrm{Perf}^{\mathrm{tf}}_{R} \to U_v \).

\begin{definition}
	Let \( \mathbb{G}_a^\circ \) be the functor on \( U_v \) defined by \( \Spa(S, S^+) \mapsto S^\circ \).
\end{definition}

Then, again by \cite[Theorem 17.1.3]{Scholze_2020}, \( \mathbb{G}_a^{\circ} \) is a sheaf.

\begin{proposition}
    There is an equivalence of categories between the category of locally free \( \mathcal{O}_{\mathrm{Perf}^{\mathrm{tf}}_{R}} \)-modules on \( \mathrm{Perf}^{\mathrm{tf}}_{R} \) and the category of locally free \( \mathbb{G}_a^\circ \)-modules on \( U_v \).
\end{proposition}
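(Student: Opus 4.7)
The plan is to show that the equivalence is induced by the natural functor of sites $\iota: \mathrm{Perf}^{\mathrm{tf}}_{R} \to U_v$ defined just before the proposition. The decisive observation is already contained in Theorem \ref{theo: structure arc= power bounded}: for every $X = \Spf(S) \in \mathrm{Perf}^{\mathrm{tf}}_{R}$ one has canonical identifications
\[ \Gamma(X, \mathcal{O}_{\mathrm{Perf}^{\mathrm{tf}}_{R}}) \;=\; S_* \;=\; S^\circ \;=\; \Gamma(\iota(X), \mathbb{G}_a^\circ), \]
and the argument of that theorem already shows that any $arc$-cover $\Spf(S_2) \to \Spf(S_1)$ in $\mathrm{Perf}^{\mathrm{tf}}_{R}$ yields a $v$-cover $\iota(\Spf(S_2)) \to \iota(\Spf(S_1))$ on the associated perfectoid Huber pairs. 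In particular $\iota$ is continuous for the $arc$-topology on the source and the $v$-topology on the target, and $\iota^{-1}\mathcal{O}_{\mathrm{Perf}^{\mathrm{tf}}_{R}}$ is naturally identified with $\mathbb{G}_a^\circ$ on objects in the image of $\iota$.

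First I would construct the forward functor. Given a locally free $\mathcal{O}_{\mathrm{Perf}^{\mathrm{tf}}_{R}}$-module $\mathcal{E}$, trivialized by some $arc$-cover $\{\Spf(S_\lambda) \to \Spf(S)\}_{\lambda}$, I set $\iota^*\mathcal{E}$ to be the $v$-sheafification of the pullback presheaf. The image cover $\{\iota(\Spf(S_\lambda)) \to \iota(\Spf(S))\}_{\lambda}$ is a $v$-cover, and the compatibility of the structure sheaves above turns the trivialization of $\mathcal{E}$ into a trivialization of $\iota^*\mathcal{E}$ by copies of $\mathbb{G}_a^\circ$. This shows $\iota^*\mathcal{E}$ is a locally free $\mathbb{G}_a^\circ$-module.

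Second I would construct the inverse. Given a locally free $\mathbb{G}_a^\circ$-module $\mathcal{F}$ on $U_v$, I define a presheaf $\iota_+\mathcal{F}$ on $\mathrm{Perf}^{\mathrm{tf}}_{R}$ by $\Spf(S) \mapsto \mathcal{F}(\iota(\Spf(S)))$. That this is an $arc$-sheaf is a direct consequence of $\iota$ taking $arc$-covers to $v$-covers together with the sheaf property of $\mathcal{F}$. For local freeness, the point is that any affinoid perfectoid $v$-cover $\Spa(T, T^+) \to \Spa(S[\tfrac{1}{\varpi}], S^+)$ trivializing $\mathcal{F}$ may be refined by one of the form $\Spa(T', (T')^+) \to \Spa(S[\tfrac{1}{\varpi}], S^+)$ with $(T')^+$ equal to the integral closure of the image of $S$ in $T'[\tfrac{1}{\varpi}]$; this refinement corresponds precisely to an $arc$-cover $\Spf((T')^+) \to \Spf(S)$ in $\mathrm{Perf}^{\mathrm{tf}}_{R}$, over which $\iota_+\mathcal{F}$ becomes free.

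The main obstacle is establishing the above tight correspondence between $arc$-covers of $\Spf(S)$ in $\mathrm{Perf}^{\mathrm{tf}}_{R}$ and $v$-covers of $\iota(\Spf(S))$ by affinoid perfectoids, including checking that the induced module structures on sections match on both sides. Once that is in place, verifying that the unit $\mathcal{E} \to \iota_+\iota^*\mathcal{E}$ and the counit $\iota^*\iota_+\mathcal{F} \to \mathcal{F}$ are isomorphisms reduces to a local computation on a common refinement, where both sides become free of the same rank over the identified structure sheaves $S_* = S^\circ$, yielding the desired equivalence of categories.
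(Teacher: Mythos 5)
Your proposal follows essentially the same route as the paper: push and pull sheaves along the functor \( \iota \colon \mathrm{Perf}^{\mathrm{tf}}_{R} \to U_v \), using the facts that \( \iota \) sends \( arc \)-covers to \( v \)-covers and that \( \Gamma(X, \mathcal{O}_{\mathrm{Perf}^{\mathrm{tf}}_{R}}) = S_* = S^\circ = \Gamma(\iota(X), \mathbb{G}_a^\circ) \). The paper's functors \( \mathcal{M}\mapsto \mathcal{M}_v \) and \( \mathcal{N}\mapsto \mathcal{N}_{arc} \) are precisely what you call \( \iota^* \) and \( \iota_+ \). However, what you dismiss as ``a local computation on a common refinement'' is in fact the crux of the argument and is not a formality. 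The image of \( \iota \) only consists of pairs \( \Spa(S,S^\circ) \) with the \emph{maximal} ring of integral elements, whereas \( U_v \) contains all admissible \( S^+ \subseteq S^\circ \). Thus verifying the counit requires proving that a locally free \( \mathbb{G}_a^\circ \)-module \( \mathcal{N} \) is insensitive to the choice of \( S^+ \), that is, \( \Gamma(\Spa(S,S^+),\mathcal{N}) \cong \Gamma(\Spa(S,S^\circ),\mathcal{N}) \) for any open and integrally closed \( S^+ \). The paper proves this by choosing a \( v \)-cover trivializing \( \mathcal{N} \), base-changing it to \( \Spa(S,S^\circ) \), and comparing the two resulting descent exact sequences. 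You should make this step explicit rather than defer it.

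There is also a small slip in your refinement claim. To produce an \( arc \)-cover over which \( \iota_+\mathcal{F} \) becomes free, you should not take \( (T')^+ \) to be the integral closure of the image of \( S \) in \( T'[\tfrac{1}{\varpi}] \): a refinement of \( \Spa(T,T^+) \) must have its ring of integral elements \emph{contain} the image of \( T^+ \), which the minimal integral closure of \( S \) will generally fail to do. The correct choice is to replace \( T^+ \) by the power-bounded subring \( T^\circ \) (the maximal, not minimal, admissible option). Then \( T^\circ \) is integral perfectoid, \( \Spf(T^\circ)\to\Spf(S) \) is an \( arc \)-cover, \( \iota(\Spf(T^\circ)) = \Spa(T,T^\circ) \) admits a map to \( \Spa(T,T^+) \), and hence \( \mathcal{F} \) trivializes over it.
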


\begin{proof}
	We first construct two functors. For a locally free \( \mathcal{O}_{\mathrm{Perf}^{\mathrm{tf}}_{R}} \)-module \( \mathcal{M} \) on \( \mathrm{Perf}^{\mathrm{tf}}_{R} \), define a functor \( \mathcal{M}_{v} \) that sends any \( \Spa(S, S^+) \in U_v \) to \( \Gamma(\Spf(S^\circ), \mathcal{M}) \). For a locally free \( \mathbb{G}_a^\circ \)-module \( \mathcal{N} \) on \( U_v \), define \( \mathcal{N}_{arc} \) that sends any \( S \in \mathrm{Perf}^{\mathrm{tf}}_{R} \) to \( \Gamma\big(\Spa(S[\frac{1}{\varpi}], S^+), \mathcal{N}\big) \).

	Next, we verify that the constructed \( \mathcal{M}_{v} \) is a locally free \( \mathbb{G}_a^{\circ} \)-module on \( U_v \). Suppose \( X = \Spf(S) \in \mathrm{Perf}^{\mathrm{tf}}_{R} \) is such that \( \mathcal{M}|_{X} \cong \mathcal{O}_{\mathrm{Perf}^{\mathrm{tf}}_{R}}^{\oplus r} \). Let \( S' \) be the subring of \( S[\frac{1}{\varpi}] \) generated by \( S^{\circ\circ} \), and let \( (S')^+ \) be the integral closure of \( S' \) in \( S[\frac{1}{\varpi}] \). Then, by the proof of Theorem \ref{theo: structure arc= power bounded}, \( V := \Spa(S[\frac{1}{\varpi}], S') \) is a \( v \)-cover of \( U \) and \( \mathcal{M}_v|_{V} \cong (\mathbb{G}_a^{\circ})^{\oplus r} \).

	We then verify that the constructed \( \mathcal{N}_{arc} \) is a locally free \( \mathcal{O}_{\mathrm{Perf}_{R}^{\mathrm{tf}}} \)-module on \( \mathrm{Perf}_{R}^{\mathrm{tf}} \). Suppose \( V := \Spa(S, S^+) \in U_v \) is a \( v \)-cover of \( U \) such that \( \mathcal{N}_{arc}|_{V} \cong (\mathbb{G}_a^{\circ})^{\oplus r} \). Let \( X := \Spf(S^+) \in \mathrm{Perf}_{R}^{\mathrm{tf}} \). Then \( X \) is an \( arc \)-cover of \( \Spf(R) \) and \( \mathcal{N}_{arc}|_{X} \cong (\mathcal{O}_{\mathrm{Perf}_{R}^{\mathrm{tf}}})^{\oplus r} \).

	It remains to verify that \( (\mathcal{M}_{v})_{arc} \cong \mathcal{M} \) and \( (\mathcal{N}_{arc})_v \cong \mathcal{N} \). By definition, $(\mathcal{M}_{v})_{arc}(S)=\mathcal{M}(S^[\frac{1}{\varpi}]^\circ)$, which is canonically isomorphic to $\mathcal{M}(S)$ by the proof of Theorem \ref{theo: structure arc= power bounded}. Also, \[(\mathcal{N}_{arc})_{v}\big(\Spa(S,S^+)\big)=\mathcal{N}\big(\Spa(S,S^\circ)\big)\] for any $\Spa(S,S^+)\in U_v$. Hence, we only need to prove that
    \[\Gamma(\Spa(S,S^+),\mathcal N)\cong \Gamma(\Spa(S,S^\circ),\mathcal{N})\]
    for any $\Spa(S,S^+)\in U_v$. Choose a $v$-cover $\Spa(T,T^+)\to \Spa(S,S^+)$ so that $\mathcal{N}|_{\Spa(T,T^+)}\cong (\mathbb{G}_a^\circ)^{\oplus r}$ and put
    \[\Spa(T,T^{+}_1)=\Spa(T,T^+)\times_{\Spa(S,S^+)}\Spa(S,S^\circ).\]
    Then, we have the following diagram
    \[\begin{tikzcd}
        0\ar{r} & \Gamma(\Spa(S,S^+),\mathcal{N})\ar{r} & \Gamma(\Spa(T,T^+),\mathcal{N})\ar{r} & \Gamma(\Spa(T,T^+)\times_{\Spa(S,S^+)}\Spa(T,T^+),\mathcal{N})\\
        0\ar{r} & \Gamma(\Spa(S,S^\circ),\mathcal{N})\ar{r}\ar{u} & \Gamma(\Spa(T,T_1^+),\mathcal{N})\ar{u}\ar{r} & \Gamma(\Spa(T,T^+_1)\times_{\Spa(S,S^\circ)}\Spa(T,T^+_1),\mathcal{N})\ar{u}
    \end{tikzcd}\]
    with all rows exact. To show the exactness of the first column, it is enough to show the exactness of the other two columns. This is obvious since $\mathcal{N}$ is isomorphic to $(\mathbb{G}_a^{\circ})^{\oplus r}$ on $\Spa(T,T^+)$.
\end{proof}

\begin{proof}[Proof of Theorem \ref{theo: avb=arc vb}]
    By Theorem \ref{Thm: SC over v} and \ref{SAAVB}, it suffices to prove that the category of $\mathcal{O}^{+,a}$-locally free sheaves on $U_v$ is equivalent to the category of $\mathbb{G}_a^\circ$-locally free sheaves on $U_v$.

    For an $\mathcal O^{+,a}$-locally free sheaf $\mathcal{M}$ on $U_v$, define $\mathcal{M}_*$ as the functor \[\mathcal{M}_*(V):=\Gamma(V,\mathcal{M})_*=\mathrm{Hom}_{R}(R^{\circ\circ},\Gamma(V,\mathcal{M})). \] By the left exactness of $(-)_*$, $\mathcal{M}_*$ is a sheaf. Then, since for any $S\in \mathrm{Perf}_{R}^{\mathrm{tf}}$, $S_*=S[\frac{1}{\varpi}]^\circ$, $\mathcal{M}_*$ is a $\mathbb{G}_a^\circ$-locally free sheaf on $U_v$. Conversely, for any $\mathbb{G}_a^\circ$-locally free sheaf $\mathcal{N}$ on $U_v$, define $\mathcal{N}^a$ as its almostification. It is not difficult to see that $\mathcal N^a$ is an $\mathcal{O}^{+,a}$-locally free sheaf. Using Theorem \ref{theo: structure arc= power bounded} on all $V\in U_v$, we obtain that the functor $(\mathcal{M}\mapsto \mathcal{M}_*)$ and the functor $(\mathcal{N}\mapsto \mathcal{N}^a)$ are quasi-inverse to each other.
\end{proof}

\bibliographystyle{alpha}
\bibliography{ref}

@article{Bhatt_2022, title={{Prisms and prismatic cohomology}}, volume={196}, ISSN={0003-486X}, url={http://dx.doi.org/10.4007/annals.2022.196.3.5}, DOI={10.4007/annals.2022.196.3.5}, number={3}, journal={Annals of Mathematics}, publisher={Annals of Mathematics}, author={Bhatt, Bhargav and Scholze, Peter}, year={2022}, month=nov }

@article{bhatt2017projectivity,
author = {Bhatt, Bhargav and Scholze, Peter},
year = {2016},
month = {12},
pages = {},
title = {Projectivity of the Witt vector affine Grassmannian},
volume = {209},
journal = {Inventiones mathematicae},
doi = {10.1007/s00222-016-0710-4}
}

@article{BhattMathew21,
author = {Bhargav Bhatt and Akhil Mathew},
title = {{The $\mathrm{arc}$-topology}},
volume = {170},
journal = {Duke Mathematical Journal},
number = {9},
publisher = {Duke University Press},
pages = {1899 -- 1988},
keywords = {Artin–Grothendieck vanishing theorem, étale cohomology, excision, Grothendieck topologies, proper base change, rigid analytic geometry, valuation rings},
year = {2021},
doi = {10.1215/00127094-2020-0088},
URL = {https://doi.org/10.1215/00127094-2020-0088}
}

@article{Gabber2002AlmostRT,
  title={Almost ring theory - sixth release},
  author={Ofer Gabber and Lorenzo Ramero},
  journal={arXiv: Algebraic Geometry},
  year={2002},
  url={https://api.semanticscholar.org/CorpusID:119637228}
}

@article{huber_general,
    author = {Huber, Roland},
    title = {{A generalization of formal schemes and rigid analytic varieties}},
    journal = {Mathematische Zeitschrift},
    volume={217},
    pages={513–551},
    year = {1994},
    DOI={https://doi.org/10.1007/BF02571959}
}

@misc{heuer2024gtorsorsperfectoidspaces,
      title={G-torsors on perfectoid spaces}, 
      author={Ben Heuer},
      year={2024},
      eprint={2207.07623},
      archivePrefix={arXiv},
      primaryClass={math.AG},
      url={https://arxiv.org/abs/2207.07623}, 
}

@article{kedlaya2019relative,
  title={{Relative p-adic Hodge theory, II: Imperfect period rings}},
  author={Kedlaya, Kiran S and Liu, Ruochuan},
  journal={arXiv preprint arXiv:1602.06899},
  year={2016}
}

@article{scholze2012perfectoid,
  title={{Perfectoid spaces}},
  author={Scholze, Peter},
  journal={Publications math{\'e}matiques de l'IH{\'E}S},
  volume={116},
  number={1},
  pages={245--313},
  year={2012},
  publisher={Springer}
}

@book{Scholze_2020, title={Berkeley Lectures on p-adic Geometry: (AMS-207)}, ISBN={9780691202150}, url={http://dx.doi.org/10.23943/princeton/9780691202082.001.0001}, DOI={10.23943/princeton/9780691202082.001.0001}, publisher={Princeton University Press}, author={Scholze, Peter and Weinstein, Jared}, year={2020}, month=may }

@misc{stacks-project,
  author       = {The {Stacks project authors}},
  title        = {The Stacks project},
  howpublished = {\url{https://stacks.math.columbia.edu}},
  year         = {2025},
}

@book{robert2000course,
  title     = {A Course in $p$-adic Analysis},
  author    = {Robert, Alain M.},
  series    = {Graduate Texts in Mathematics},
  volume    = {198},
  year      = {2000},
  publisher = {Springer-Verlag},
  address   = {New York},
  doi       = {10.1007/978-1-4757-3254-2},
  isbn      = {978-1-4419-3132-0}
}

@misc{Bhatt2017Perfectoid,
  author       = {Bhatt, Bhargav},
  title        = {Lecture notes for a class on perfectoid spaces},
  howpublished = {Course notes for Math 679, University of Michigan},
  year         = {2017},
  month        = {April},
  day          = {23},
  url          = {http://www-personal.umich.edu/~bhatt/teaching/mat679w17/lectures.pdf},
  note         = {Available at \url{https://public.websites.umich.edu/~bhattb/teaching/mat679w17/lectures.pdf}}
}

@misc{Bhatt2025padichodge,
  author       = {Bhatt, Bhargav},
  title        = {Lecture notes for a class on {$p$}-adic {Hodge} theory},
  howpublished = {Course notes},
  year         = {2025},
  url          = {https://www.math.ias.edu/~bhatt/teaching/mat517f25/},
  note         = {Available at \url{https://www.math.ias.edu/~bhatt/teaching/mat517f25/}}
}

@book{Zavyalov_2025, title={Almost Coherent Modules and Almost Coherent Sheaves}, ISBN={9783985475889}, ISSN={2747-9099}, url={http://dx.doi.org/10.4171/mems/19}, DOI={10.4171/mems/19}, journal={Memoirs of the European Mathematical Society}, publisher={EMS Press}, author={Zavyalov, Bogdan}, year={2025}, month=mar }

@article{Ingleton_1952, title={The Hahn-Banach theorem for non-Archimedean-valued fields}, volume={48}, DOI={10.1017/S0305004100027353}, number={1}, journal={Mathematical Proceedings of the Cambridge Philosophical Society}, author={Ingleton, A. W.}, year={1952}, pages={41–45}}

@inbook{kedlaya2017,
    author = {Bhargav Bhatt and Ana Caraiani and Kiran S. Kedlaya and Peter Scholze and Jared Weinstein},
    title = {{Perfectoid Spaces: Lectures from the 2017 Arizona Winter School}},
    publisher = {{American Mathematical Society}},
    year = {2017},
    chapter = {{Sheaves, stacks, and shtukas}}
}

@misc{yu2025deltalifting1dimensionalanalyticfields,
      title={$\delta$-lifting and $1$-dimensional analytic fields}, 
      author={Jiahong Yu},
      year={2025},
      eprint={2512.03952},
      archivePrefix={arXiv},
      primaryClass={math.AC},
      url={https://arxiv.org/abs/2512.03952}, 
}
\addcontentsline{toc}{section}{References}
\end{document}